\DeclareMathAlphabet{\mathbbold}{U}{bbold}{m}{n}
\renewcommand\pagemark{{\usekomafont{pagenumber}\thepage\ of \pageref*{LastPage}}}
  \space \textsc{mr}\addcolon\space
  \space \textsc{zbl}\addcolon\space
\newtheoremstyle{plain}
  {3mm}                         %
  {3mm}                         %
  {\slshape}                    %
  {}                            %
  {\bfseries}       %
  {.}                           %
  {.5em}                        %
  {}                            %
\newtheoremstyle{definition}
  {3mm}                         %
  {3mm}                         %
  {}                            %
  {}                            %
  {\bfseries}       %
  {.}                           %
  {.5em}                        %
  {}                            %
\theoremstyle{plain}
\newtheorem{theorem}{Theorem}[section]
\newtheorem{lemma}[theorem]{Lemma}
\newtheorem{proposition}[theorem]{Proposition}
\newtheorem{corollary}[theorem]{Corollary}
\theoremstyle{definition}
\newtheorem{definition}[theorem]{Definition}
\newtheorem{remark}[theorem]{Remark}
\newtheorem{example}[theorem]{Example}
\newtheorem{question}[theorem]{Question}
\newtheorem{problem}[theorem]{Problem}
\newcommand{\define}[1]{{\textsl{\textbf{#1}}}}
\newcommand{\f}[1]{\mathcal{#1}}
\author[1]{Shea D.~Burns}
\author[2]{Dennis Davenport}
\author[2]{Shakuan Frankson}
\author[3]{Conner Griffin}
\author[3]{John H.~Johnson Jr.}
\author[2]{Malick Kebe}
\affil[1]{%
	\small
	Department of Mathematics\\
	North Carolina A\&T State University\\
	Greensboro, NC%
}
\affil[2]{%
	\small
	Department of Mathematics\\
	Howard University University\\
	Washington, DC
}
\affil[3]{%
	\small
	Department of Mathematics\\
 	The Ohio State University\\
	Columbus, OH
}
\affil[ ]{%
	\small
	\texttt{\href{mailto:sburns@ncat.edu}{sburns@ncat.edu}, \href{mailto:dennis.davenport@live.com}{dennis.davenport@live.com}, \href{mailto:shakuan.frankson@bison.howard.edu}{shakuan.frankson@bison.howard.edu}, \href{mailto:griffin.1101@osu.edu}{griffin.1101@osu.edu}, \href{mailto:johnson.5316@osu.edu}{johnson.5316@osu.edu}, \textrm{and} \href{mailto:malick.kebe@bison.howard.edu}{malick.kebe@bison.howard.edu}}
}
\title{Characterizing \( (\mathcal{F}, \mathcal{G}) \)-syndetic, \( (\mathcal{F}, \mathcal{G}) \)-thick, and related notions of size using derived sets along ultrafilters}
\date{}
\begin{document}
\maketitle

\begin{abstract}
    We characterize relative notions of syndetic and thick sets using what we call ``derived'' sets along ultrafilters.
    Manipulations of derived sets is a characteristic feature of algebra in the Stone--Čech compactification of discrete semigroups and its applications.
    Combined with the existence of idempotents and structure of the smallest ideal in closed subsemigroups of the Stone--Čech compactification, our particular use of derived sets adapt and generalize methods recently used by Griffin \cite{Griffin:2024aa} to characterize relative piecewise syndetic sets.
    As an application, we define an algebraically interesting subset of the Stone--Čech compactification and show, in some ways, it shares structural properties analogous to the smallest ideal.

	\vspace{2em}
	{\noindent \textbf{Keywords} notions of size, syndetic sets, thick sets, piecewise syndetic sets, collectionwise piecewise syndetic, central sets, algebra in the Stone--Čech compactification, ultrafilters, derived sets along ultrafilters}

	\vspace{2em}

	{\noindent \textbf{Mathematics Subject Classification (2020)} Primary: 54D35, 54D80 Secondary: 22A15}
\end{abstract}

\section{Introduction}
\label{section:introduction}

In this paper, we generalize methods using what we call ``derived'' sets along an ultrafilter to characterize certain ``relative'' notions of (piecewise) syndetic sets, thick sets, and two more related notions of sizes.
Recently, Griffin used derived sets to positively answer \cite[Theorem~4.10]{Griffin:2024aa} a question posed by Christopherson and Johnson \cite[Question~4.6]{Christopherson:2022wr} on whether Shuungula, Zelenyuk, and Zelenyuk's notion of relative piecewise syndetic \cite{Shuungula:2009ty} is equivalent to another notion in \cite[Definition~4.1]{Christopherson:2022wr}. 
Building on Griffin's method, a major theme of this paper is that derived sets provide an elegant organizing principle to understand and manipulate relative notions of size: for instance one interpretation of our Corollary~\ref{corollary:relative-syndetic-thick}, an immediate consequence of one of our main results (Theorem~\ref{theorem:relative-syndetic-thick}), is relative syndetic and thick sets inevitably arise when considering derived sets along ultrafilters.

Manipulations of derived sets are foundational in algebra of the Stone--Čech compactification and its applications.
Prototypical examples include the Galvin--Glazer idempotent ultrafilter proof \cite[Sec.~5.2]{Hindman:2012tq} of Hindman's finite sums theorem \cite{Hindman1974}; Papazyan's proof showing all idempotent filters extend to an idempotent ultrafilter  \cite{Papazyan1989};
Bergelson and Hindman, with the assistance of B.~Weiss, minimal idempotent ultrafilter proof \cite{Bergelson:1990aa} of Furstenberg's central sets theorem \cite[Proposition~8.21]{Furstenberg1981a}, along with the generalization of this theorem by De, Hindman, and Strauss \cite{De:2008aa}; and
Beiglböck's ultrafilter proof \cite{Beiglbock:2011aa} of a generalization of Jin's sumset theorem \cite{Jin:2002aa}.
This paper's main focus is on the relationship of derived sets to generalizations of the three classical notions of syndetic, thick, and piecewise syndetic sets in a semigroup.
For instance, see Hindman's survey \cite{Hindman2019a} for an historical overview of these, and other, notions of size.

In the following definition, and in the rest of this paper, given a set \( X \) we let \( \mathcal{P}_f(X) \) denote the collection of all finite nonempty subsets of \( X \).
If \( (S, \cdot) \) is a semigroup, \( A \subseteq S \), and \( h \in S \), then we put \( h^{-1}A := \{ x \in S : hx \in A \} \).
\begin{definition}
\label{definition:(piecewise-)syndetic-and-thick}    
    Let \( (S, \cdot) \) be a semigroup and let \( A \subseteq S \).
    \begin{itemize}
        \item[(a)]
            \( A \) is \define{syndetic} if and only if there exists \( H \in \mathcal{P}_f(S) \) such that \( \bigcup_{h \in H} h^{-1}A = S \).

        \item[(b)]
            \( A \) is \define{thick} if and only if for all \( H \in \mathcal{P}_f(S) \) we have \( \bigcap_{h \in H} h^{-1}A \ne \emptyset \).

        \item[(c)]
            \( A \) is \define{piecewise syndetic} if and only if there exist syndetic \( B \subseteq S \) and thick \( C \subseteq S \) with \( A = B \cap C \).

        \item[(d)] We define the following collections:
        \begin{itemize}
            \item[(i)] \( \mathsf{Syn} := \{ A \subseteq S : A \text{ is syndetic} \} \).
            \item[(ii)] \( \mathsf{Thick} := \{ A \subseteq S : A \text{ is thick} \} \).
            \item[(iii)] \( \mathsf{PS} := \{ A \subseteq S : A \text{ is piecewise syndetic} \} \).
        \end{itemize}
    \end{itemize}
\end{definition}
In particular for the positive integers \( (\mathbb{N}, +) \), a set is syndetic if and only if it has bounded gaps; a set is thick if and only if it contains arbitrarily long blocks of consecutive positive integers; and, a set is piecewise syndetic if and only if there is a fixed bound such that the set contains arbitrarily long subsets of positive integers whose gaps are no bigger than the bound.

In Section~\ref{section:review-algebraic-structure} we provide a brief overview of the algebraic structure of the Stone--Čech compactification.
But we note syndetic, thick, and piecewise syndetic sets have  elegant characterizations, due to Bergelson, Hindman, and McCutcheon \cite{Bergelson1998}, in terms of this algebraic structure. %
For instance, the following known characterization of piecewise syndetic sets in terms of the smallest ideal \( K(\beta S) \) is the motivating theorem for one of our main results: 
\begin{theorem}
\label{theorem:characterization-piecewise-syndetic}
    Let \( (S, \cdot) \) be a semigroup and let \( A \subseteq S \).
    The following are equivalent.
    \begin{itemize}
        \item[(a)]
            \( A \in \mathsf{PS} \).

        \item[(b)]
            There exists \( H \in \mathcal{P}_f(S) \) such that \( \bigcup_{h \in H} h^{-1}A \in \mathsf{Thick} \).

        \item[(c)]
            There exists a minimal left ideal \( L \subseteq \beta S \) such that \( \overline{A} \cap L \ne \emptyset \).

        \item[(d)]
            \( \overline{A} \cap K(\beta S) \ne \emptyset \).

        \item[(e)]
            There exists \( q \in K(\beta S) \) such that \( \{ h \in S : h^{-1}A \in q \} \in \mathsf{Syn} \).
        
         \item[(f)]
          There exists \( e \in E(K(\beta S)) \) such that \( \{ h \in S : h^{-1}A \in e \} \in \mathsf{Syn} \).
         
         \item[(g)]
            There exists \( e \in E(\beta S) \) such that \( \{ h \in S : h^{-1}A \in e \} \in \mathsf{Syn} \).

         \item[(h)]            
            There exists \( q \in \beta S \) such that \( \{ h \in S : h^{-1}A \in q \} \in \mathsf{Syn} \).            
    \end{itemize}
\end{theorem}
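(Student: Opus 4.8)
The plan is to prove the eight statements equivalent through the cycle $(a)\Rightarrow(b)\Rightarrow(c)\Rightarrow(d)\Rightarrow(e)\Rightarrow(h)\Rightarrow(b)$, together with $(b)\Rightarrow(a)$ and $(b)\Rightarrow(f)\Rightarrow(g)$. Since $E(K(\beta S))\subseteq K(\beta S)\subseteq\beta S$ and every minimal left ideal lies in $K(\beta S)$, the implications $(c)\Rightarrow(d)$, $(f)\Rightarrow(e)$, $(f)\Rightarrow(g)$, $(e)\Rightarrow(h)$, $(g)\Rightarrow(h)$ are trivial containments, so the real work is in $(a)\Leftrightarrow(b)$, $(b)\Rightarrow(c)$, $(b)\Rightarrow(f)$, $(h)\Rightarrow(b)$, and $(d)\Rightarrow(e)$. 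Throughout I will freely use the facts recalled in Section~\ref{section:review-algebraic-structure}: for $p,q\in\beta S$ one has $A\in p\cdot q$ iff $\{s\in S:s^{-1}A\in q\}\in p$; each left translation $\lambda_p$ is continuous and $\beta S\,q\subseteq\overline{Sq}$; every left ideal of $\beta S$ contains a minimal left ideal, minimal left ideals are closed (hence compact) and contain an idempotent, and $K(\beta S)$ is the union of the minimal left ideals; and, by a finite-intersection argument applied to $\{\overline{h^{-1}A}:h\in S\}$, $A$ is thick iff $\overline A$ contains a minimal left ideal.

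The equivalence $(a)\Leftrightarrow(b)$ is purely combinatorial. If $A=B\cap C$ with $B$ syndetic via $H\in\mathcal{P}_f(S)$ and $C$ thick, then for $F\in\mathcal{P}_f(S)$ thickness of $C$ gives $x$ with $(HF)x\subseteq C$; for each $f\in F$ choose $h\in H$ with $fx\in h^{-1}B$, so $(hf)x\in B\cap C=A$, whence $Fx\subseteq\bigcup_{h\in H}h^{-1}A$ and $\bigcup_{h\in H}h^{-1}A$ is thick. Conversely, given $T:=\bigcup_{h\in H}h^{-1}A$ thick, I would put $C:=A\cup T$ (thick, being a superset of $T$) and $B:=A\cup(S\setminus T)$; one checks $B\cap C=A$, and $B$ is syndetic because $\bigcup_{g\in H\cup HH}g^{-1}B=S$: given $s\in S$, if $s\in T$ then $hs\in A\subseteq B$ for some $h\in H$; if $s\notin T$ but $Hs\subseteq T$ then $(h'h)s\in A\subseteq B$ for some $h'h\in HH$; otherwise $hs\notin A\cup T$, i.e.\ $hs\in B$, for some $h\in H$. (Passing to the product set $HH$ rather than $H$ alone is the one mild subtlety.)

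For the algebraic implications: $(b)\Rightarrow(c)$ holds because $\overline T=\bigcup_{h\in H}\overline{h^{-1}A}$ contains a minimal left ideal $L$, and for $p\in L$ with $h^{-1}A\in p$ we get $hp\in\overline A$ and $hp\in L$ (as $L$ is a left ideal), so $\overline A\cap L\ne\emptyset$. For $(b)\Rightarrow(f)$, take $L\subseteq\overline T$ minimal and an idempotent $e\in L\subseteq E(K(\beta S))$; for every $s\in S$ we have $se\in L\subseteq\overline T$, so $h^{-1}A\in se$ for some $h\in H$, i.e.\ $(hs)^{-1}A\in e$, i.e.\ $s\in h^{-1}\{g:g^{-1}A\in e\}$, so $\{g\in S:g^{-1}A\in e\}$ is syndetic (witnessed by $H$). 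For $(h)\Rightarrow(b)$, suppose $\{g:g^{-1}A\in q\}$ is syndetic via $H$; since $h^{-1}\{g:g^{-1}A\in q\}=\{s:sq\in\overline{h^{-1}A}\}$, taking the union over $h\in H$ gives $Sq\subseteq\overline{\bigcup_{h\in H}h^{-1}A}$, hence the left ideal $\beta S\,q\subseteq\overline{Sq}\subseteq\overline{\bigcup_{h\in H}h^{-1}A}$ contains a minimal left ideal, so $\bigcup_{h\in H}h^{-1}A$ is thick.

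The step that needs a genuine idea, and the one I expect to be the main obstacle, is $(d)\Rightarrow(e)$: from $p\in\overline A\cap K(\beta S)$ one must show the derived set $D:=\{g\in S:g^{-1}A\in p\}$ is \emph{syndetic}, not merely nonempty. Set $L=\beta S\,p$, a closed (hence compact) minimal left ideal. For each $q\in L$, minimality of $L$ gives $Lq=L$, so $p=rq$ for some $r\in L$; from $A\in p=rq$ we get $\{g:g^{-1}A\in q\}\in r$, so this set is nonempty and $q\in\lambda_g^{-1}(\overline A)$ for some $g\in S$. Thus $\{\lambda_g^{-1}(\overline A):g\in S\}$ is an open cover of the compact set $L$; extract a finite subcover indexed by $G\in\mathcal{P}_f(S)$. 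For any $s\in S$ we have $sp\in L$, so $g(sp)=(gs)p\in\overline A$ for some $g\in G$, i.e.\ $s\in g^{-1}D$; hence $\bigcup_{g\in G}g^{-1}D=S$, so $D$ is syndetic. (Alternatively one can prove $(d)\Rightarrow(b)$ directly: were no $\bigcup_{h\in G}h^{-1}A$ thick, each $\bigcap_{h\in G}h^{-1}(S\setminus A)$ would be syndetic, so its closure would meet $L=\beta S\,p$; a finite-intersection/compactness argument then yields $v\in L$ with $s^{-1}(S\setminus A)\in v$ for all $s\in S$, and writing $p=rv$ forces $\{s:s^{-1}A\in v\}=\emptyset\in r$, a contradiction.) With the cycle closed and $(b)\Rightarrow(f)\Rightarrow(g)\Rightarrow(h)$, $(f)\Rightarrow(e)$ adjoined, all eight conditions are equivalent.
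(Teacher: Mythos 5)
Your proposal is correct and logically complete: the cycle (a)\(\Rightarrow\)(b)\(\Rightarrow\)(c)\(\Rightarrow\)(d)\(\Rightarrow\)(e)\(\Rightarrow\)(h)\(\Rightarrow\)(b), together with (b)\(\Rightarrow\)(a), (b)\(\Rightarrow\)(f), and the trivial containments, really does tie all eight statements together. The toolkit is the same as the paper's (closures of thick sets contain minimal left ideals, the product formula for ultrafilters, the structure of \( K(\beta S) \)), but you route several steps differently. Where the paper simply cites \cite[Theorems~4.39 and 4.49]{Hindman:2012tq} for (d)\(\Rightarrow\)(e) and (a)\(\Leftrightarrow\)(b), you reprove both: the decomposition \( A = \bigl(A \cup (S\setminus T)\bigr) \cap (A \cup T) \) with the \( H \cup HH \) witness, and the compactness/finite-subcover argument on the minimal left ideal \( L = \beta S \cdot p \). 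These are essentially the standard proofs of the cited results, so your version is more self-contained at the cost of length. You reach (f) directly from (b) by taking an idempotent in the minimal left ideal inside \( \overline{\bigcup_{h \in H} h^{-1}A} \), which is shorter and cleaner than the paper's algebraic (e)\(\Rightarrow\)(f) via \( L = L \cdot q \); and your (h)\(\Rightarrow\)(b) is topological--algebraic (\( \beta S \cdot q \subseteq \overline{Sq} \subseteq \overline{T} \)) where the paper's is purely combinatorial. Two cosmetic slips, neither a gap: in the third case of (b)\(\Rightarrow\)(a) you only know \( hs \notin T \), not \( hs \notin A \cup T \), but \( hs \in S \setminus T \subseteq B \) is all you need; and ``\( e \in L \subseteq E(K(\beta S)) \)'' should read \( e \in E(L) \subseteq E(K(\beta S)) \).
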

\begin{proof}
    \textbf{(a) \( \Leftrightarrow \) (b):} This equivalence is a proved in \cite[Theorem~4.49]{Hindman:2012tq}.

    \smallskip

    \textbf{(b) \( \Rightarrow \) (c):}
    Pick \( H \in \mathcal{P}_f(S) \) as guaranteed for \( A \).
    Then because \( \bigcup_{h \in H} h^{-1}A \) is thick, by \cite[Theorem~4.48(a)]{Hindman:2012tq}, it follows we can pick a minimal left ideal \( L \) of \( \beta S \) with \( L \subseteq \overline{\bigcup_{h \in H} h^{-1}A} \).
    Let \( q \in L \).
    Then \( \bigcup_{h \in H} h^{-1}A \in q \) and we can pick \( h \in H \) with \( h^{-1}A \in q \), that is, \( A \in h \cdot q \).
    Since \( L \) is left ideal we have \( h \cdot q \in L \) and so \( \overline{A} \cap L \ne \emptyset \).
    
    \smallskip

    \textbf{(c) \( \Leftrightarrow \) (d):}
    The smallest ideal \( K(\beta S) \) is the union of all minimal left ideals of \( \beta S \).

    \smallskip

    \textbf{(d) \( \Rightarrow \) (e):}
    Pick \( q \in \overline{A} \cap K(\beta S) \) as guaranteed.
    By \cite[Theorem~4.39]{Hindman:2012tq} we have \( \{ h \in S : h^{-1}A \in q \} \in \mathsf{Syn} \).

    \smallskip

    \textbf{(e) \( \Rightarrow \) (f):}
    Pick \( q \in K(\beta S) \) as guaranteed for \( A \). 
    Let \( L \) be a minimal left ideal of \( \beta S \) with \( q \in L \) and let \( e \in E(L) \).
    By minimality we have \( L= L \cdot q \).

    Now from \cite[Theorem~4.48(b)]{Hindman:2012tq} we have \( \{ h \in S : h^{-1}A \in e \} \) is syndetic if and only if \( \overline{\{ h \in S : h^{-1}A \in e \}} \) intersects every left ideal of \( \beta S \). 
    This is equivalent to showing \( \overline{\{ h \in S : h^{-1}A \in e \}} \cap \beta S \cdot p \ne \emptyset \) for all \( p \in \beta S \).
    
    Let \( p \in \beta S \).
    Then \( p \cdot e \in L = L \cdot q \) and we can pick \( u \in L \) with \( p \cdot e = u \cdot q \).
    By assumption (and \cite[Theorem~4.48(b)]{Hindman:2012tq} again) we have \( \overline{\{ h \in S : h^{-1}A \in q \}} \cap \beta S \cdot u \ne \emptyset \).
    Pick \( r \in \beta S \) with \( \{ h \in S : h^{-1}A \in q \} \in r \cdot u \), that is, \( A \in r \cdot u \cdot q = r \cdot p \cdot e \).
    Hence \( \{ h \in S : h^{-1}A \in e \} \in r \cdot p \), that is, \( \overline{\{ h \in S : h^{-1}A \in e \}} \cap \beta S \cdot p \ne \emptyset \).
    Therefore \( \{ h \in S : h^{-1}A \in e \} \in \mathsf{Syn} \).
    
   \smallskip

   \textbf{(f) \( \Rightarrow \) (g) and (g) \( \Rightarrow \) (h):}
   Both these directions are immediate.

   \textbf{(h) \( \Rightarrow \) (b):}
   Pick \( q \in \beta S \) as guaranteed and let \( B := \{ h \in S : h^{-1}A \in q \} \).
   Pick \( H \in \mathcal{P}_f(S) \) with \( \bigcup_{h \in H} h^{-1}B = S \). 
   We show \( \bigcup_{h \in H} h^{-1}A \) is thick.
   To that end let \( F \in \mathcal{P}_f(S) \) and for each \( f \in F \) pick \( h_f \in H \) with \( h_f f \in B \).
   Therefore \( \bigcap_{f \in F} (h_f f)^{-1}A \in q \) and we can pick \( x \) in this intersection. 
   Then \( x \in \bigcap_{f \in F} f^{-1}\bigl( \bigcup_{h \in H} h^{-1}A \bigr) \).
\end{proof}
Two of our main results (Theorem~\ref{theorem:relative-piecewise-syndetic} and Corollary~\ref{corollary:relative-kernel}) generalizes Theorem~\ref{theorem:characterization-piecewise-syndetic} to relative notions of piecewise syndetic sets (Definition~\ref{definition:relative-piecewise-syndetic}). 
The central method of our proof adapts Griffin \cite{Griffin:2024aa} and involves manipulations of sets of the form \( A'(q) := \{ h \in S : h^{-1}A \in q \} \), a derived set of \( A \) along an ultrafilter \( q \) (Section~\ref{section:derived-sets}).

\subsection*{Organization of article}
\label{section:organization}

In Section~\ref{section:preliminaries} we state more-or-less known or folklore results (Corollary~\ref{corollary:mesh-operator} and Propositions~\ref{proposition:stack}, \ref{proposition:filter-grill}, and \ref{proposition:grill}) and precisely describe what we mean by ``notions of size'' (Definitions~\ref{definition:collections-mesh}, \ref{definition:stack}, and \ref{definition:filter-grill}).
The main highlight, and organizing principle of this section, is the well-known ``mesh'' operator (Section~\ref{section:collections-mesh}) and its relation to notions of size.
Finally we end this section (Section~\ref{section:review-algebraic-structure}) with a brief review of the algebraic structure of the Stone--Čech compactification of a discrete semigroup.

In Section~\ref{section:derived-sets} we (re)introduce a more generalized concept of a ``derived'' set (see Remark~\ref{remark:derived-set} for other instances in the literature) along a collection (Definition~\ref{definition:derived-set}) and (re)prove some slight generalizations of elementary properties (Proposition~\ref{proposition:derived-set}) of derived sets.
Generalizing from the characterization of a product of two ultrafilters (see Section~\ref{section:review-algebraic-structure}), we then define a product of two collections (Definition~\ref{definition:product-of-collections}) and, as the main result of this section, prove some elementary properties of this product (Corollary~\ref{corollary:derived-set}).
In particular, this product is an associative binary operation that is isotone in each component.

Section~\ref{section:relative-notions} is the main and longest section of our article.
The two main results (Theorems~\ref{theorem:relative-syndetic-thick} and \ref{theorem:relative-piecewise-syndetic}) characterize relative notions of (piecewise) syndetic and thick sets in terms of derived sets.
Shuungula, Zelenyuk, and Zelenyuk \cite{Shuungula:2009ty} introduced the relative notion of (piecewise) syndetic sets which forms our starting point.
We also state a problem (Problem~\ref{problem:characterization-problem}) asking to characterize minimal and maximal ultrafilters with respect to a preorder defined in terms of derived sets. 
As an application of these characterizations, we define an algebraically interesting subset of \( \beta S \), which in some ways is analogous to the smallest ideal, (Theorem~\ref{theorem:relative-kernel} and Corollary~\ref{corollary:relative-kernel}). 

In Section~\ref{section:collectionwise} we define collectionwise relative piecewise syndetic sets (Definition~\ref{definition:collectionwise-relative-piecewise-syndetic}) and use this notion to characterize which collections are contained in ultrafilters of the algebraically interesting subset defined in Section~\ref{section:relative-notions} (Theorem~\ref{theorem:collectionwise-relative-piecewise-syndetic}).
Hindman and Lisan \cite{Hindman:1994aa} defined the notion of collectionwise piecewise syndetic to characterize which collections are contained in ultrafilters of the smallest ideal.
Recently, generalizing this result, Luperi Baglini, Patra, and Shaikh \cite{Luperi-Baglini:2023aa} defined a relative notion of collectionwise piecewise syndetic and characterized which collections are contained in ultrafilters of the smallest ideal of closed subsemigroups of \( \beta S \). 
Both definitions of collectionwise piecewise syndetic are complicated, while, in our view, our definition is simpler (because it uses the concept of a derived set along an ultrafilter). 
We end this section by proposing a definition for relative central sets (Definition~\ref{definition:relative-central}) using our relative notion of collectionwise piecewise syndetic, ask if this notion satisfies the Ramsey property (Question~\ref{question:partition-regular}), and provide a partial answer of ``yes'' under some additional assumptions (Corollary~\ref{corollary:relative-central-is-partition-regular}).

In a several of our proofs below, we follow a typical convention in mathematical practice: for three (or more) statments \( P \),  \( Q \),  and \( R \) we write ``\( P \iff Q \iff R \)'' to mean ``\( P \iff Q \) and \( Q \iff R \)'', and, similarly, we take ``\( P \implies Q \iff R \)'' to mean ``\( P \implies Q \) and \( Q \iff R \)''.

\section{Preliminaries: Notions of size and mesh}
\label{section:preliminaries}

We introduce terminology which, roughly speaking, captures different aspects of ``notion of size''.
Not all the terminology (such as `mesh', `stack', or `grill') is well known, but they have precedents in the literature (for instance, see \cite[Remarks~2.2 and 2.4]{Christopherson:2022wr}).
The mesh operator relates all the notions of size we consider.

All the results in this section go back at least as far as Choquet \cite{Choquet:1947aa}, and later in a more general framework of Galois connections between partially ordered sets in Schmidt \cite{Schmidt:1952aa, Schmidt:1953gm}.
Akins \cite[Ch.~2]{Akin1997} is a more contemporary reference, while Erné \cite{Erne:2004aa} and Erné, Koslowski, Melton, and Strecker \cite{Erne:1993aa} surveys the historical origins and applications of Galois connections.
The proof of the results in this section are routine, but for convenience, we provide the standard proofs.

\subsection{Collections and mesh}
\label{section:collections-mesh}

\begin{definition}
\label{definition:collections-mesh}
    Let \( S \) be a nonempty set.
    \begin{itemize}
    \item[(a)] 
        We call \( \mathcal{F} \subseteq \mathcal{P}(S) \) a \define{collection} on \( S \), which is \define{proper} if \( \emptyset \ne  \mathcal{F} \) and \( \emptyset \notin \mathcal{F} \). 
				
    \item[(b)]
    For collections \( \mathcal{F} \) the \define{mesh (operator)} is
    \[ 
        \mathcal{F}^* := \{ A \subseteq S : \text{for all } B \in \mathcal{F} \text { we have } A \cap B \ne \emptyset \}. 
    \]	
    \end{itemize}
\end{definition}

Given a collection \( \mathcal{F} \)  we consider \( A \subseteq S \) to be ``large'' if \( A \in \mathcal{F} \), and, in analogy with topological adherent points, \( A \) is ``nearly large'' if \( A \in \mathcal{F}^* \).
Intuitively, we usually also assume \emph{some} set is large and \emph{not} consider the empty set as large. 
Proper collections satisfy this assumption, but we also allow ``improper'' collections since it's sometimes convenient to consider them as well.

\begin{example}
\label{example:collection}
    Throughout this paper we will illustrate many of the concepts via the following examples.
    \begin{itemize}
        \item[(a)] 
            For instance, \( \bigl\{ \{k, k+1, \ldots \} : k \in \mathbb{N} \bigr\} \) is a proper collection on \( \mathbb{N} \) 
            with \( \bigl\{ \{k, k+1, \ldots \} : k \in \mathbb{N} \bigr\}^* = \{ A \subseteq \mathbb{N} : (\forall \, k \in \mathbb{N})(\exists \, x \in \mathbb{N}) \; k + x \in A \} \).

        \item[(b)] 
            Let \( \langle x_n \rangle_{n = 1}^\infty \) be a sequence in \( \mathbb{N} \) and put 
            \(
                \mathrm{FS}(\langle x_n \rangle_{n = 1}^\infty) := \bigl\{ \sum_{n \in H} x_n : H \in \mathcal{P}_f(\mathbb{N}) \bigr\}
            \).
            Then \( \{ \mathrm{FS}(\langle x_n \rangle_{n = k}^\infty) : k \in \mathbb{N} \} \) is also a proper collection on \( \mathbb{N} \)
            with \( \{ \mathrm{FS}(\langle x_n \rangle_{n = k}^\infty) : k \in \mathbb{N} \}^* = \{ A \subseteq \mathbb{N} : (\forall \, k \in \mathbb{N})(\exists \, H \in \mathcal{P}_f(\{k, k+1, \ldots \}) \; \sum_{n \in H} x_n \in A \} \).
    \end{itemize} 
\end{example}

Partially order the set of all collections on \( S \) (that is, \( \mathcal{P}\bigl(\mathcal{P}(S)\bigr) \)) by set inclusion \( \subseteq \).
The mesh is a Galois connection on \( (\mathcal{P}\bigl(\mathcal{P}(S)\bigr), \subseteq) \):
\begin{proposition}
\label{proposition:mesh-operator}
    Let \( S \) be a nonempty set.
    For collections \( \mathcal{F}_1 \) and \( \mathcal{F}_2 \) on \( S \) we have \( \mathcal{F}_1 \subseteq \mathcal{F}_2^* \) if and only if \( \mathcal{F}_2 \subseteq \mathcal{F}_1^* \).
\end{proposition}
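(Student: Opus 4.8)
The plan is to unwind the definition of the mesh operator, which is visibly symmetric in its two arguments, and exploit that symmetry directly. By definition, $A \in \mathcal{F}^*$ means that $A \cap B \neq \emptyset$ for every $B \in \mathcal{F}$. So the condition $\mathcal{F}_1 \subseteq \mathcal{F}_2^*$ says: for every $A \in \mathcal{F}_1$ and every $B \in \mathcal{F}_2$, we have $A \cap B \neq \emptyset$. This is a statement quantified over the pair $(A, B) \in \mathcal{F}_1 \times \mathcal{F}_2$, and the predicate $A \cap B \neq \emptyset$ is symmetric in $A$ and $B$ (since $A \cap B = B \cap A$). Therefore it is equivalent to: for every $B \in \mathcal{F}_2$ and every $A \in \mathcal{F}_1$, $B \cap A \neq \emptyset$, which unwinds to $\mathcal{F}_2 \subseteq \mathcal{F}_1^*$.

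Concretely, I would carry out the proof as follows. First, assume $\mathcal{F}_1 \subseteq \mathcal{F}_2^*$ and aim to show $\mathcal{F}_2 \subseteq \mathcal{F}_1^*$; by the symmetry of the hypothesis and conclusion (swapping the roles of $\mathcal{F}_1$ and $\mathcal{F}_2$), proving this one implication suffices for the biconditional. Fix an arbitrary $B \in \mathcal{F}_2$; to conclude $B \in \mathcal{F}_1^*$ we must check $B \cap A \neq \emptyset$ for every $A \in \mathcal{F}_1$. So fix $A \in \mathcal{F}_1$. By hypothesis $A \in \mathcal{F}_2^*$, hence $A \cap B' \neq \emptyset$ for all $B' \in \mathcal{F}_2$; applying this with $B' = B$ gives $A \cap B \neq \emptyset$, i.e., $B \cap A \neq \emptyset$. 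Since $A \in \mathcal{F}_1$ was arbitrary, $B \in \mathcal{F}_1^*$; since $B \in \mathcal{F}_2$ was arbitrary, $\mathcal{F}_2 \subseteq \mathcal{F}_1^*$.

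There is essentially no obstacle here: the entire content is recognizing that "$\subseteq \mathcal{F}^*$" is a shorthand for a doubly-quantified symmetric intersection condition, so the proposition is a one-line triviality once the definitions are spelled out. If anything, the only thing to be careful about is the bookkeeping of which collection plays which role and making sure the "by symmetry" remark genuinely covers the converse implication — which it does, because interchanging $\mathcal{F}_1 \leftrightarrow \mathcal{F}_2$ turns the statement "$\mathcal{F}_1 \subseteq \mathcal{F}_2^* \implies \mathcal{F}_2 \subseteq \mathcal{F}_1^*$" into "$\mathcal{F}_2 \subseteq \mathcal{F}_1^* \implies \mathcal{F}_1 \subseteq \mathcal{F}_2^*$", giving both directions. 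I would write the proof in three or four sentences and not belabor it, matching the excerpt's own remark that the proofs in this section are routine.
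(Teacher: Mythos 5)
Your proof is correct and follows essentially the same route as the paper's: invoke symmetry to reduce to one implication, then verify it by unwinding the definition of the mesh operator (the paper fixes \( A \in \mathcal{F}_2 \) and \( B \in \mathcal{F}_1 \), uses \( B \in \mathcal{F}_2^* \) to get \( A \cap B \ne \emptyset \), and concludes \( A \in \mathcal{F}_1^* \), which is your argument with the roles of the letters swapped). Nothing is missing.
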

\begin{proof}
    By symmetry (of the argument) it suffices to only show one direction, say the left-to-right direction (\( \Rightarrow \)).
    Let \( A \in \mathcal{F}_2 \).
    For \( B \in \mathcal{F}_1 \) by assumption we have \( B \in \mathcal{F}_2^* \) and so \( A \cap B \ne \emptyset \).
    Hence \( A \in \mathcal{F}_1^* \).
\end{proof}

Observe \( \emptyset = \mathcal{F} \iff \emptyset \in \mathcal{F}^* \).
(To see this, note \( \{\emptyset\}^* = \{ A \subseteq S : A \cap \emptyset \ne \emptyset \} = \emptyset \) and, by Proposition~\ref{proposition:mesh-operator}, we have \( \mathcal{F} \subseteq \{\emptyset\}^* = \emptyset \iff \{ \emptyset \} \subseteq \mathcal{F}^* \iff \emptyset \in \mathcal{F}^*  \).)
From essentially similar reasoning we immediately obtain a few properties of the mesh (some of which are equivalent to Proposition~\ref{proposition:mesh-operator}).
\begin{corollary}
\label{corollary:mesh-operator}
    Let \( S \) be a nonempty set. 
    \begin{itemize}
    \item[(a)]
        For every collection \( \mathcal{F} \) on \( S \) we have \( \mathcal{F} \subseteq \mathcal{F}^{**} \).
			
    \item[(b)]
	For all collections \( \mathcal{F}_1, \mathcal{F}_2 \) on \( S \) we have \( \mathcal{F}_1 \subseteq \mathcal{F}_2 \) implies \( \mathcal{F}_2^* \subseteq \mathcal{F}_1^* \) and \( \mathcal{F}_1^{**} \subseteq \mathcal{F}_2^{**} \).
	In particular, \( \emptyset \in \mathcal{F} \) if and only if \( \emptyset = \mathcal{F}^* \).
	Hence \( \mathcal{F} \) is proper if and only if \( \mathcal{F}^* \) is proper.
			
    \item[(c)]
	For \( \mathcal{F} \) a collection on \( S \) we have \( \mathcal{F}^* = \mathcal{F}^{***} \).
        In particular, for collections \( \mathcal{F}_1, \mathcal{F}_2 \) we have \( \mathcal{F}_1 \subseteq \mathcal{F}_2^{**} \) if and only if \( \mathcal{F}_1^{**} \subseteq \mathcal{F}_2^{**} \).
			
    \item[(d)]
	The set of fixed points of the function \( \mathcal{F} \mapsto \mathcal{F}^{**} \) is precisely the image of the function \( \mathcal{F} \mapsto \mathcal{F}^* \), that is,
    \[
        \{ \mathcal{F} : \mathcal{F} = \mathcal{F}^{**} \} = \{ \mathcal{F}^* :  \mathcal{F} \text{ collection on } S \}.
    \]
			
    \item[(e)]
        For \( \mathcal{F} \) a collection on \( S \) we have \( \mathcal{F}^* = \max \{ \mathcal{F}_1  : \mathcal{F} \subseteq \mathcal{F}_1^* \} \).
	In particular, \( \emptyset^* = \mathcal{P}(S) \) and \( \mathcal{P}(S)^* = \emptyset \).
			
    \item[(f)]
        Let \( \langle \mathcal{F}_i \rangle_{i \in I} \) be a family of collections on \( S \).
        Then \( \bigl(  \bigcup_{i \in I} \mathcal{F}_i \bigr)^* = \bigcap_{i \in I} \mathcal{F}_i^* \), where as usual if \( \langle \mathcal{F}_i \rangle_{i \in I} \) is the empty family we take \( \bigcap_{i \in I} \mathcal{F}_i = \mathcal{P}(S) \).		
    \end{itemize}	
\end{corollary}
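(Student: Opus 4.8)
The plan is to obtain every clause from the Galois-connection identity of Proposition~\ref{proposition:mesh-operator}, together with the two trivial evaluations $\emptyset^* = \mathcal{P}(S)$ (vacuously true) and $\{\emptyset\}^* = \emptyset$, and the displayed remark that $\mathcal{F} = \emptyset$ if and only if $\emptyset \in \mathcal{F}^*$. The proofs must be sequenced by logical dependence: I would first prove the antitonicity clause of (b) directly from the definition of the mesh, then deduce the ``unit'' inclusion $\mathcal{F} \subseteq \mathcal{F}^{**}$ of (a) from the proposition. From these two facts all the order-theoretic statements in (b)--(d) are purely formal, leaving only the properness assertions of (b), which I would handle last. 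Part (e) is one further application of the proposition, and (f) is an immediate unwinding of the definition.

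For (a), apply Proposition~\ref{proposition:mesh-operator} with $\mathcal{F}_1 := \mathcal{F}$ and $\mathcal{F}_2 := \mathcal{F}^*$: since $\mathcal{F}^* \subseteq \mathcal{F}^*$ holds trivially, the proposition converts it into $\mathcal{F} \subseteq (\mathcal{F}^*)^* = \mathcal{F}^{**}$. Antitonicity is immediate from the definition of the mesh: if $\mathcal{F}_1 \subseteq \mathcal{F}_2$ and a set $A$ meets every member of $\mathcal{F}_2$, then it meets every member of $\mathcal{F}_1$, so $\mathcal{F}_2^* \subseteq \mathcal{F}_1^*$; applying this twice gives $\mathcal{F}_1^{**} \subseteq \mathcal{F}_2^{**}$. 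For (c) I would sandwich: (a) applied to $\mathcal{F}^*$ yields $\mathcal{F}^* \subseteq \mathcal{F}^{***}$, while antitonicity applied to $\mathcal{F} \subseteq \mathcal{F}^{**}$ yields $\mathcal{F}^{***} \subseteq \mathcal{F}^*$, whence $\mathcal{F}^* = \mathcal{F}^{***}$ and, in particular, the operator $\mathcal{F} \mapsto \mathcal{F}^{**}$ is idempotent. The remaining ``in particular'' clauses are then bookkeeping: in (c), $\mathcal{F}_1 \subseteq \mathcal{F}_2^{**}$ implies $\mathcal{F}_1^{**} \subseteq (\mathcal{F}_2^{**})^{**} = \mathcal{F}_2^{**}$ by monotonicity and idempotence, and the converse follows from $\mathcal{F}_1 \subseteq \mathcal{F}_1^{**}$; in (d), if $\mathcal{G} = \mathcal{G}^{**}$ then $\mathcal{G} = (\mathcal{G}^*)^*$ exhibits $\mathcal{G}$ as a mesh, while conversely any mesh $\mathcal{F}^*$ is a fixed point since $(\mathcal{F}^*)^{**} = \mathcal{F}^{***} = \mathcal{F}^*$.

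For (e), I would first note that $\mathcal{F}^*$ itself lies in $\{ \mathcal{F}_1 : \mathcal{F} \subseteq \mathcal{F}_1^* \}$, precisely because $\mathcal{F} \subseteq \mathcal{F}^{**}$ by (a); and any $\mathcal{F}_1$ in that set satisfies $\mathcal{F}_1 \subseteq \mathcal{F}^*$ directly by Proposition~\ref{proposition:mesh-operator}. Hence the set has a greatest element, namely $\mathcal{F}^*$. The evaluations $\emptyset^* = \mathcal{P}(S)$ and $\mathcal{P}(S)^* = \emptyset$ are read off the definition (the latter because $\emptyset \in \mathcal{P}(S)$ and no set meets $\emptyset$). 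Finally, (f) is the chain of equivalences: $A$ meets every member of $\bigcup_{i} \mathcal{F}_i$ iff for every $i$ the set $A$ meets every member of $\mathcal{F}_i$ iff $A \in \bigcap_{i} \mathcal{F}_i^*$, with the empty-index case covered by $\emptyset^* = \mathcal{P}(S)$ and the stated convention $\bigcap_{i \in \emptyset} \mathcal{F}_i^* = \mathcal{P}(S)$.

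I do not expect any genuine obstacle; the only points that need a little care are the empty-set edge cases, namely the properness assertions in (b). The equivalence ``$\emptyset \in \mathcal{F}$ if and only if $\mathcal{F}^* = \emptyset$'' is not purely formal: the forward direction follows from $\{\emptyset\} \subseteq \mathcal{F}$ and antitonicity (so $\mathcal{F}^* \subseteq \{\emptyset\}^* = \emptyset$), but for the converse I would argue directly, since if $\emptyset \notin \mathcal{F}$ then $S$ meets every member of $\mathcal{F}$, so $S \in \mathcal{F}^*$ and $\mathcal{F}^* \ne \emptyset$. Combining this with the displayed remark (equivalently, $\mathcal{F} \ne \emptyset$ if and only if $\emptyset \notin \mathcal{F}^*$) gives at once that $\mathcal{F}$ is proper if and only if $\mathcal{F}^*$ is proper. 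The whole write-up is then a short chain of these observations.
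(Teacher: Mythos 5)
Your proposal is correct and follows essentially the same route as the paper: everything is driven by the Galois-connection equivalence of Proposition~\ref{proposition:mesh-operator}, with the unit inclusion (a) and antitonicity (b) established first and the remaining clauses deduced formally, and the properness/empty-set edge cases handled by the same direct observations ($\{\emptyset\}^* = \emptyset$ and $S \in \mathcal{F}^*$ when $\emptyset \notin \mathcal{F}$). The only differences are cosmetic — you verify antitonicity and part (f) by unwinding the definition elementwise where the paper routes them through the adjunction — and these do not change the substance of the argument.
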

\begin{proof}
	\textbf{(a):} 
	Trivially, \( \mathcal{F}^* \subseteq \mathcal{F}^* \), and so, by Proposition~\ref{proposition:mesh-operator}, we have \( \mathcal{F} \subseteq \mathcal{F}^{**} \).
	
	\smallskip
	
	\textbf{(b):}
	By (a) \( \mathcal{F}_1 \subseteq \mathcal{F}_2 \subseteq \mathcal{F}_2^{**} \), and so Proposition~\ref{proposition:mesh-operator} yields \( \mathcal{F}_2^* \subseteq \mathcal{F}_1^* \).
	Then \( \mathcal{F}_1^{**} \subseteq \mathcal{F}_2^{**} \) follows from \( \mathcal{F}_2^* \subseteq \mathcal{F}_1^* \).
	
	For the `in particular' statement if \( \emptyset \in \mathcal{F} \), then \( \{\emptyset\} \subseteq \mathcal{F} \) implies \( \mathcal{F}^* \subseteq \{\emptyset\}^* = \emptyset \) (where this last equality is from the justification of the observation above).
	Conversely, if \( \emptyset \notin \mathcal{F} \), then \( S \in \mathcal{F}^* \) and so \( \emptyset \ne \mathcal{F}^* \).
		
	\smallskip
	
	\textbf{(c):}
	By (a) it suffices to only show \( \mathcal{F}^{***} \subseteq \mathcal{F}^* \) (which follows from (a) and (b)).

	To see the `in particular' statement, we have the following string of biconditionals:
	\begin{align*}
		\mathcal{F}_1 \subseteq \mathcal{F}_2^{**} &\iff \mathcal{F}_2^* \subseteq \mathcal{F}_1^* &\text{(Proposition~\ref{proposition:mesh-operator})}\\
		&\iff \mathcal{F}_2^* \subseteq \mathcal{F}_1^* = \mathcal{F}_1^{***} \\
		&\iff \mathcal{F}_1^{**} \subseteq \mathcal{F}_2^{**}. &\text{(Proposition~\ref{proposition:mesh-operator})}
	\end{align*}
	
	\smallskip
	
	\textbf{(d):}
	The left-to-right inclusion (\( \subseteq \)) is trivial, while the right-to-left inclusion (\( \supseteq \)) follows from (c).
	
	\smallskip
	
	\textbf{(e):}
	Note (a) shows \( \{ \mathcal{F}_1 : \mathcal{F} \subseteq \mathcal{F}_1^* \} \)  contains \( \mathcal{F}^{*} \).
	If \( \mathcal{F}_1 \) is a collection on \( S \) with \( \mathcal{F} \subseteq \mathcal{F}_1^* \), then apply Proposition~\ref{proposition:mesh-operator} to obtain \( \mathcal{F}_1 \subseteq \mathcal{F}^* \).
	
	For the `in particular' statement, we have \( \emptyset^* = \max \{ \mathcal{F} : \emptyset\subseteq \mathcal{F}^* \} = \max \mathcal{P}\bigl(\mathcal{P}(S)\bigr) = \mathcal{P}(S) \).
	To then see \( \mathcal{P}(S)^* = \emptyset \) note from the observation above
	\( \{ \mathcal{F} : \mathcal{P}(S) \subseteq \mathcal{F}^* \} \subseteq \{ \mathcal{F} : \emptyset \in \mathcal{F}^* \} = \{ \emptyset \} \), and so, it follows \( \mathcal{P}(S)^* = \max \{ \emptyset \} = \emptyset \).
	
	\smallskip
	
	\textbf{(f):} 
    It suffices to show \( \mathcal{F} \subseteq \bigl( \bigcup_{i \in I} \mathcal{F}_i \bigr)^* \) if and only if \( \mathcal{F} \subseteq \bigcap_{i \in I} \mathcal{F}_i^*\). 
    To that end note we have the following string of biconditionals:
    \begin{align*}
        \mathcal{F} \subseteq \bigl( \bigcup_{i \in I} \mathcal{F}_i \bigr)^* &\iff \bigcup_{i \in I} \mathcal{F}_i \subseteq \mathcal{F}^* &\text{(Proposition~\ref{proposition:mesh-operator})}\\
        &\iff \text{for all } i \in I \text{, } \mathcal{F}_i \subseteq \mathcal{F}^* \\
        &\iff \text{for all } i \in I \text{, } \mathcal{F} \subseteq \mathcal{F}_i^* &\text{(Proposition~\ref{proposition:mesh-operator})} \\
        &\iff \mathcal{F} \subseteq \bigcap_{i \in I} \mathcal{F}_i^*.
    \end{align*}

\end{proof}

\begin{remark}
\label{remark:closure-operator}
	The `in particular' statement in Corollary~\ref{corollary:mesh-operator}(c) is equivalent to asserting \( \mathcal{F} \mapsto \mathcal{F}^{**} \)  is a closure operator on the set of all collections (for instance, see \cite[Proposition 3(4)]{Erne:1993aa}).
	Hence \( \mathcal{F}^{**} \) is the smallest (see Proposition~\ref{proposition:stack}(c)) upward closed collection containing \( \mathcal{F} \).
\end{remark}

\subsection{Stacks: upward closed collections}
\label{section:stack}

\begin{definition}
\label{definition:stack}
	Let \( S \) be a nonempty set and let \( \mathcal{F} \) be a collection on \( S \).
	We call \( \mathcal{F} \) a \define{stack} on \( S \) if and only if \( \mathcal{F} \) is upward closed: \( A \in \mathcal{F} \) and \( A \subseteq B \subseteq S \) implies \( B \in \mathcal{F} \).
\end{definition}

Intuitively, a stack also captures another property we may want to assume for collections of ``large'' sets~---~any set containing a large set is large too.
Most of the collections we consider will be proper stacks.

Neither of the collections in Example~\ref{example:collection} are stacks but they both easily generate proper stacks:
\begin{example}
\label{example:stacks}    
    \mbox{}
    \begin{itemize}
        \item[(a)]
            Put \( \mathcal{C} := \bigl\{ A \subseteq \mathbb{N} : (\exists \, k \in \mathbb{N}) \; \{k, k+1, \ldots, \} \subseteq A \bigr\} \).
            Then \( \mathcal{C} \) is a proper stack on \( \mathbb{N} \) with
            \( \mathcal{C}^* = \bigl\{ A \subseteq \mathbb{N} : (\forall \, k \in \mathbb{N}) (\exists \, x \in \mathbb{N}) \; k + x \in A \} \)   
            (compare with Example~\ref{example:collection}(a)).

        \item[(b)]
            Put \( \mathcal{H} :=\{ A \subseteq \mathbb{N} :  (\exists\, k \in \mathbb{N}) \; \mathrm{FS}(\langle x_n \rangle_{n=k}^\infty) \subseteq A \} \).
            Then \( \mathcal{H} \) is a proper stack on \( \mathbb{N} \) with
            \( \mathcal{H}^* = \{ A \subseteq \mathbb{N} : (\forall \, k \in \mathbb{N})(\exists \, H \in \mathcal{P}_f(\{k, k+1, \ldots \}) \; \sum_{n \in H} x_n \in A \} \) 
            (again, compare with Example~\ref{example:collection}(b)).
            
    \end{itemize}
\end{example}

\todo[inline, color=gray!10]{\textsf{Idea}(mainly self note for John): The filter \( \mathcal{H} \) is analogous to \href{https://en.wikipedia.org/wiki/Filters_in_topology#Filters_and_nets}{``elementary'' (eventuality) filters} in topology. 
Any useful applications/point-of-view regarding this filter, especially related to considering filters as a lattice. 
Mainly, I'm thinking about the use of the pseudocomplement operator.}

The next result shows stacks are precisely the fixed points of the function \( \mathcal{F} \mapsto \mathcal{F}^{**} \):
\begin{proposition}
\label{proposition:stack}
	Let \( S \) be a nonempty set and let \( \mathcal{F} \) be a collection on \( S \).
	\begin{itemize}
		\item[(a)] 
			\( \mathcal{F}^* \) and \( \mathcal{F}^{**} \) are both stacks on \( S \), which are proper if and only if \( \mathcal{F} \) is proper.

		\item[(b)]
			If \( \mathcal{F} \) is a stack, then for all \( A \subseteq S \) we have \( A \in \mathcal{F}^* \) if and only if \( S \setminus A \notin \mathcal{F} \).
			
		\item[(c)]
			\( \mathcal{F}^{**} = \{ A \subseteq S : \text{there exists } B \in \mathcal{F} \text{ such that } B \subseteq A \} \).
			
		\item[(d)]
			\( \mathcal{F} \) is a stack on \( S \) if and only if \( \mathcal{F} = \mathcal{F}^{**} \).
			
		\item[(e)]
			Let \( \langle \mathcal{F}_i \rangle_{i \in I} \) be a family of stacks.
			Then \( \bigl(  \bigcap_{i \in I} \mathcal{F}_i \bigr)^* = \bigcup_{i \in I} \mathcal{F}_i^* \), where as usual if \( \langle \mathcal{F}_i \rangle_{i \in I} \) is the empty family we take \( \bigcap_{i \in I} \mathcal{F}_i = \mathcal{P}(S) \).		
	\end{itemize}	
\end{proposition}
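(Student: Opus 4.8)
The plan is to establish the five parts in the order listed, each leaning on the earlier ones together with Proposition~\ref{proposition:mesh-operator} and Corollary~\ref{corollary:mesh-operator}, and taking care never to invoke Remark~\ref{remark:closure-operator} (which itself rests on part (c)).

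For (a), I would first record the elementary fact that the mesh of \emph{any} collection is a stack: if \( \mathcal{G} \) is a collection, \( A \in \mathcal{G}^* \), and \( A \subseteq B \subseteq S \), then for each \( C \in \mathcal{G} \) we have \( \emptyset \ne A \cap C \subseteq B \cap C \), so \( B \in \mathcal{G}^* \). Applying this to \( \mathcal{G} = \mathcal{F} \) and to \( \mathcal{G} = \mathcal{F}^* \) shows both \( \mathcal{F}^* \) and \( \mathcal{F}^{**} = (\mathcal{F}^*)^* \) are stacks, and the properness assertion is one or two applications of Corollary~\ref{corollary:mesh-operator}(b). For (b), with \( \mathcal{F} \) a stack, I would observe that ``there exists \( B \in \mathcal{F} \) with \( A \cap B = \emptyset \)'' says exactly ``there exists \( B \in \mathcal{F} \) with \( B \subseteq S \setminus A \)'', which by upward closure of \( \mathcal{F} \) holds if and only if \( S \setminus A \in \mathcal{F} \); negating both sides gives the claim.

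For (c), write \( \mathcal{G} \) for the right-hand side. The inclusion \( \mathcal{G} \subseteq \mathcal{F}^{**} \) is immediate: if \( B \in \mathcal{F} \) and \( B \subseteq A \), then for every \( C \in \mathcal{F}^* \) we have \( \emptyset \ne B \cap C \subseteq A \cap C \), so \( A \in \mathcal{F}^{**} \). For \( \mathcal{F}^{**} \subseteq \mathcal{G} \), suppose \( A \notin \mathcal{G} \); then no member of \( \mathcal{F} \) is contained in \( A \), i.e.\ every \( B \in \mathcal{F} \) meets \( S \setminus A \), so \( S \setminus A \in \mathcal{F}^* \), and since \( A \cap (S \setminus A) = \emptyset \) this forces \( A \notin (\mathcal{F}^*)^* = \mathcal{F}^{**} \). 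Part (d) follows quickly: if \( \mathcal{F} = \mathcal{F}^{**} \) then \( \mathcal{F} \) is a stack by (a); conversely, if \( \mathcal{F} \) is a stack then \( \mathcal{F} \subseteq \mathcal{F}^{**} \) by Corollary~\ref{corollary:mesh-operator}(a), while (c) together with upward closure of \( \mathcal{F} \) gives \( \mathcal{F}^{**} \subseteq \mathcal{F} \).

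For (e), I would first note that an arbitrary intersection of stacks is again a stack (upward closure is preserved under intersection), the empty intersection being \( \mathcal{P}(S) \), which is a stack; so (b) applies to \( \bigcap_{i \in I} \mathcal{F}_i \). Then for \( A \subseteq S \),
\begin{align*}
    A \in \bigl( \bigcap_{i \in I} \mathcal{F}_i \bigr)^*
    &\iff S \setminus A \notin \bigcap_{i \in I} \mathcal{F}_i
    \iff \text{there is } i \in I \text{ with } S \setminus A \notin \mathcal{F}_i \\
    &\iff \text{there is } i \in I \text{ with } A \in \mathcal{F}_i^*
    \iff A \in \bigcup_{i \in I} \mathcal{F}_i^*,
\end{align*}
using (b) twice; the empty-family case is consistent since both sides then equal \( \emptyset \) by Corollary~\ref{corollary:mesh-operator}(e). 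An alternative route is to apply Corollary~\ref{corollary:mesh-operator}(f) to \( \langle \mathcal{F}_i^* \rangle_{i \in I} \), simplify \( \bigcap_{i \in I} \mathcal{F}_i^{**} = \bigcap_{i \in I} \mathcal{F}_i \) via (d), and then take the mesh once more, using that \( \bigcup_{i \in I} \mathcal{F}_i^* \) is a stack (a union of stacks) and (d) again.

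None of the steps is genuinely hard; the points to watch are that the stack hypothesis is used essentially in (b) and (e)~---~the displayed chain of equivalences in (e) fails without it~---~and that the empty-family convention in (e) is reconciled with Corollary~\ref{corollary:mesh-operator}(e),(f). So the ``main obstacle'' is really just bookkeeping: arranging the dependencies among (a)--(e) and Corollary~\ref{corollary:mesh-operator} so that nothing is circular.
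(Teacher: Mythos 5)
Your proposal is correct and follows essentially the same route as the paper: (a) and (b) are identical, (c) and (d) differ only in bookkeeping (two inclusions versus a biconditional chain), and your primary argument for (e) — a pointwise computation via the complement characterization in (b) — is a minor variant of the paper's double-mesh manipulation, which you in any case record as your alternative route.
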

\begin{proof}
	\textbf{(a):}
		To see \( \mathcal{F}^* \) is a stack let \( A \in \mathcal{F}^* \) and \( A \subseteq B \subseteq S \).
		For \( C \in \mathcal{F} \) we have \( \emptyset \ne A \cap C \subseteq B \cap C \).
		Hence \( B \in \mathcal{F}^* \).	
		It's then immediate \( \mathcal{F}^{**} \) is also a stack.
		The assertion on properness follows from Corollary~\ref{corollary:mesh-operator}(b).
	
	\smallskip	
	
	\textbf{(b):}
		If \( A \in \mathcal{F}^* \), then \( A \cap (S \setminus A) = \emptyset \) implies \( S \setminus A \notin \mathcal{F} \).
		Conversely, if \( A \notin \mathcal{F}^* \), then there exists \( B \in \mathcal{F} \) with \( A \cap B = \emptyset \), that is, \( B \subseteq S \setminus A \).
		Since \( \mathcal{F} \) is a stack we have \( S \setminus A \in \mathcal{F} \).

	\smallskip
	
	\textbf{(c):}
	We have the following string of biconditionals:
	\begin{align*}
		A \in \mathcal{F}^{**} &\iff S \setminus A \notin \mathcal{F}^* &\text{(by (a) and (b))} \\
		&\iff \text{there is } B \in \mathcal{F} \text{ such that } B \cap (S \setminus A) = \emptyset \\
		&\iff \text{there is } B \in \mathcal{F} \text{ such that } B \subseteq A.
	\end{align*}
	
	\smallskip
		
	\textbf{(d):}
	For the forward direction, by (b) we have \( A \in \mathcal{F}^{**} \iff S \setminus A \notin \mathcal{F}^* \iff A \in \mathcal{F} \).
	The converse direction follows from (a).
	
	\smallskip
	
	\textbf{(e):}
	We have 
	\begin{align*}
		\Bigl(  \bigcap_{i \in I} \mathcal{F}_i \Bigr)^* &= \Bigl(  \bigcap_{i \in I} \mathcal{F}_i^{**} \Bigr)^* &\text{(by (d))} \\
		&= \Bigl( \bigcup_{i \in I} \mathcal{F}_i^* \Bigr)^{**} &\text{(Corollary~\ref{corollary:mesh-operator}(f))} \\
		&= \bigcup_{i \in I} \mathcal{F}_i^*. &\text{(union is a stack and by (d))}
	\end{align*}
\end{proof}

\subsection{(Ultra)filters and grills}
\label{section:(ultra)filters-grills}

We now state the last three notions of size and observe how they are related via the mesh operator.
\begin{definition}
\label{definition:filter-grill}
	Let \( S \) be a nonempty set and let \( \mathcal{F} \) be a stack on \( S \).
	\begin{itemize}
		\item[(a)]
			\( \mathcal{F} \) is a \define{filter} on \( S \) if and only if \( \emptyset \ne \mathcal{F} \) and \( \mathcal{F} \) is closed under finite intersections: \( A_1 \in \mathcal{F} \) and \( A_2 \in \mathcal{F} \)  implies \( A_1 \cap A_2 \in \mathcal{F} \).
			
		\item[(b)]
			\( \mathcal{F} \) is a \define{grill} on \( S \) if and only if \( \emptyset \notin \mathcal{F} \) and \( \mathcal{F} \) satisfies the Ramsey property: \( A_1 \cup A_2 \in \mathcal{F} \) implies either \( A_1 \in \mathcal{F} \) or \( A_2 \in \mathcal{F}_2 \).
			
		\item[(c)]
			An \define{ultrafilter} on \( S \) is both a filter and grill.
	\end{itemize}	
\end{definition}

Elements of a filter \( \mathcal{F} \) are qualitatively ``full measure'' sets, while elements of the corresponding grill \( \mathcal{F}^* \) (see Proposition~\ref{proposition:filter-grill}(a)) are qualitatively ``positive measure'' sets. 
(For instance, consider the filter \( \mathcal{F} \) consisting of measurable subsets of the interval \( [0, 1] \) with Lebesgue measure 1, then \( \mathcal{F}^* \) are precisely measurable subsets of \( [0, 1] \) with positive Lebesgue measure.)

\begin{example}
\label{example:filters}
    \mbox{}
    \begin{itemize}
        \item[(a)]
            Both the stacks \( \mathcal{C} \) and \( \mathcal{H} \) in Example~\ref{example:stacks} are actually filters on \( \mathbb{N} \).
            And, it's not hard to show (or see Proposition~\ref{proposition:filter-grill}(a)) that both \( \mathcal{C}^* \) and \( \mathcal{H}^* \) are grills on \( \mathbb{N} \) too.

        \item[(b)]
            A bit more generally, let \( S \) is a nonempty set and put  \( \mathcal{C} := \{ A \subseteq S : S \setminus A \text{ is finite}\} \).
            Then \( \mathcal{C} \) is the well known cofinite filter with \( \mathcal{C}^* = \{ A \subseteq S : A \text{ is infinite} \} \) a grill.
            Note \( \mathcal{C} \) is proper if and only if \( S \) is infinite.

        \item[(c)]
            Put \( \mathcal{D} := \{ A \subseteq \mathbb{N} : \mathsf{bd}^\star(\mathbb{N} \setminus A) = 0 \} \), where \( \mathsf{bd}^\star(A) \) is the upper Banach density of \( A \) (see Section~\ref{section:review-algebraic-structure}). 
            Then \( \mathcal{D}^* = \{ A \subseteq \mathbb{N} : \mathsf{bd}^\star(A) > 0 \} \),  and because \( \mathsf{bd}^\star \) is finitely subadditive (\( \mathsf{bd}^\star(A \cup B) \le \mathsf{bd}^\star(A) + \mathsf{bd}^\star(B) \)), we have \( \mathcal{D}^* \) is a proper grill.
            From this it is not hard to show (or again, see Proposition~\ref{proposition:filter-grill}(a)) \( \mathcal{D} \) is a proper filter.   
    \end{itemize}
\end{example}

We allow \( \mathcal{P}(S) \) as the improper filter and \( \emptyset \) as the improper grill, but it follows that ultrafilters are proper.
As noted by Choquet \cite{Choquet:1947aa} and Schmidt \cite{Schmidt:1953gm}, the mesh gives an order anti-isomorphism between filters and grills where ultrafilters are precisely the fixed points of \( \mathcal{F} \mapsto  \mathcal{F}^* \) when restricted to filters:
\begin{proposition}
\label{proposition:filter-grill}
	Let \( S \) be a nonempty set and let \( \mathcal{F} \) be a stack.
	\begin{itemize}
		\item[(a)]
			\( \mathcal{F} \) is a filter if and only if \( \mathcal{F}^* \) is a grill.
			
		\item[(b)] 
			If \( \mathcal{F} \) is a filter, then \( \mathcal{F} \) is proper if and only if \( \mathcal{F} \subseteq \mathcal{F}^* \).
		
		\item[(c)]
			If \( \mathcal{F} \) is a filter, then \( \mathcal{F} \) is an ultrafilter if and only if \( \mathcal{F} = \mathcal{F}^* \).
			
		\item[(d)]
			If \( \mathcal{F} \) is a filter, then \( \mathcal{F} \) is an ultrafilter if and only if for all \( A \subseteq S \) we have \( A \in \mathcal{F} \iff S \setminus A \notin \mathcal{F} \).
			
		\item[(e)]
			Ultrafilters are precisely maximal proper filters.
	\end{itemize}	
\end{proposition}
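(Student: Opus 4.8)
The plan is to prove the five parts in order, with the workhorse being the ``complementation dictionary'' of Proposition~\ref{proposition:stack}(b): for a stack \( \mathcal{F} \) one has \( A \in \mathcal{F}^* \) if and only if \( S \setminus A \notin \mathcal{F} \). This is exactly what converts the closure-under-finite-intersections property of a filter into the Ramsey property of its mesh, and back again. Throughout I will keep an eye on the improper collections, using the observation \( \emptyset = \mathcal{F} \iff \emptyset \in \mathcal{F}^* \) and Corollary~\ref{corollary:mesh-operator}(b) (which already gives that \( \mathcal{F} \) is proper iff \( \mathcal{F}^* \) is proper), so that the biconditionals do not degenerate.

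For (a): \( \mathcal{F}^* \) is automatically a stack by Proposition~\ref{proposition:stack}(a). For the forward direction I assume \( A_1, A_2 \notin \mathcal{F}^* \), apply Proposition~\ref{proposition:stack}(b) to get \( S \setminus A_1, S \setminus A_2 \in \mathcal{F} \), intersect to get \( S \setminus (A_1 \cup A_2) \in \mathcal{F} \), and read off \( A_1 \cup A_2 \notin \mathcal{F}^* \) --- the contrapositive of the Ramsey property; the converse is the mirror image, starting from \( A_1, A_2 \in \mathcal{F} \) with \( A_1 \cap A_2 \notin \mathcal{F} \) and using the Ramsey property of \( \mathcal{F}^* \). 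Part (b) is quick: if \( \mathcal{F} \) is proper and \( A, B \in \mathcal{F} \) then \( \emptyset \ne A \cap B \in \mathcal{F} \), so \( A \in \mathcal{F}^* \); conversely \( \mathcal{F} \ne \emptyset \) forces \( \emptyset \notin \mathcal{F}^* \), so \( \mathcal{F} \subseteq \mathcal{F}^* \) excludes \( \emptyset \in \mathcal{F} \). For (c): an ultrafilter is proper (being both a filter and a grill), hence \( \mathcal{F} \subseteq \mathcal{F}^* \) by (b); conversely, for \( A \in \mathcal{F}^* \) we have \( S \setminus A \notin \mathcal{F} \) by Proposition~\ref{proposition:stack}(b), and since \( S \in \mathcal{F} \) (a filter is a nonempty stack) the Ramsey property yields \( A \in \mathcal{F} \), so \( \mathcal{F}^* \subseteq \mathcal{F} \); for the other implication, \( \mathcal{F} = \mathcal{F}^* \) makes \( \mathcal{F} \) a grill by (a). Part (d) is then a restatement: by (c), \( \mathcal{F} \) is an ultrafilter iff \( \mathcal{F} = \mathcal{F}^* \), and by Proposition~\ref{proposition:stack}(b) the latter says precisely that \( A \in \mathcal{F} \iff S \setminus A \notin \mathcal{F} \) for all \( A \subseteq S \).

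For (e): an ultrafilter is a proper filter; if \( \mathcal{F} \) is an ultrafilter and \( \mathcal{G} \) a proper filter with \( \mathcal{F} \subseteq \mathcal{G} \), then Corollary~\ref{corollary:mesh-operator}(b) together with parts (b) and (c) gives \( \mathcal{G} \subseteq \mathcal{G}^* \subseteq \mathcal{F}^* = \mathcal{F} \subseteq \mathcal{G} \), so \( \mathcal{G} = \mathcal{F} \) and ultrafilters are maximal proper filters. For the converse, suppose \( \mathcal{F} \) is a maximal proper filter that is not an ultrafilter; by (b) and (c), \( \mathcal{F} \subsetneq \mathcal{F}^* \), so fix \( A \in \mathcal{F}^* \setminus \mathcal{F} \) and set \( \mathcal{G} := \{ B \subseteq S : \text{there exists } C \in \mathcal{F} \text{ with } A \cap C \subseteq B \} \), the filter generated by \( \mathcal{F} \cup \{A\} \). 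One checks directly that \( \mathcal{G} \) is a stack closed under finite intersections with \( \mathcal{F} \cup \{A\} \subseteq \mathcal{G} \); and \( A \in \mathcal{F}^* \) means \( A \cap C \ne \emptyset \) for every \( C \in \mathcal{F} \), hence \( \emptyset \notin \mathcal{G} \), so \( \mathcal{G} \) is a proper filter strictly larger than \( \mathcal{F} \) --- contradicting maximality. The only genuinely non-mechanical step is this construction of the generated filter in (e); everything else is careful bookkeeping with the mesh and Proposition~\ref{proposition:stack}(b), the main pitfall being to keep the improper collections from breaking the equivalences.
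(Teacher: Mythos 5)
Your proof is correct and follows essentially the same route as the paper: everything is driven by the complementation dictionary of Proposition~\ref{proposition:stack}(b), parts (b)--(d) match the paper's arguments, and your maximality argument in (e) builds the same generated filter the paper writes as \( \{A \cap B : B \in \mathcal{F}\}^{**} \). The only cosmetic differences are that you verify (a) and (c) directly where the paper cites \cite{Christopherson:2022wr} (which the paper notes is routine), and your chain \( \mathcal{G} \subseteq \mathcal{G}^* \subseteq \mathcal{F}^* = \mathcal{F} \) in (e) is a slightly slicker packaging of the paper's argument that a strictly larger filter must be improper.
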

\begin{proof}
	\textbf{(a):} 
	Corollary~\ref{corollary:mesh-operator}(b) asserts \( \mathcal{F} \) is proper if and only if \( \mathcal{F}^* \) is proper.	
	If \( \mathcal{F} \) is proper, then for instance see \cite[Proposition~2.5(e)]{Christopherson:2022wr} (or it can easily be verified directed).
	If \( \mathcal{F} \) is improper, then this follows from Corollary~\ref{corollary:mesh-operator}(e).
	
	\smallskip
	
	\textbf{(b):}
	By Proposition~\ref{proposition:stack}(a) and (b) we have \( \mathcal{F}^* = \{ A \subseteq S : S \setminus A \notin \mathcal{F} \} \).
	If \( \mathcal{F} \) is proper, then for \( A \in \mathcal{F} \) we have \( S \setminus A \notin \mathcal{F} \) (since \( \mathcal{F} \) is a filter), and so \( A \in \mathcal{F}^* \).
	Conversely, if \( \mathcal{F} \subseteq \mathcal{F}^* \), then since \( S \in \mathcal{F}^* \) ( a filter is a nonempty stack) we have \( \emptyset \not\in \mathcal{F} \) and  \( \emptyset \ne \mathcal{F} \) (since \( \mathcal{F} \) is a filter). 
	Therefore \( \mathcal{F} \) is proper.

	\smallskip
	
	\textbf{(c):}
	For instance see \cite[Proposition~2.5(f)]{Christopherson:2022wr}.
	
	\smallskip
	
	\textbf{(d):}
	By Proposition~\ref{proposition:stack}(b), note the condition `for all \( A \subseteq S \) we have \( A \in \mathcal{F} \iff S \setminus A \notin \mathcal{F} \)' is equivalent to `\( \mathcal{F} = \mathcal{F}^* \)'.
	Hence (d) follows from (c).
	
	\smallskip
	
	\textbf{(e):}
	Let \( p \) be an ultrafilter (see Section~\ref{section:review-algebraic-structure} for our notational choices for ultrafilters) and let \( \mathcal{F} \) be a filter with \( p \subseteq \mathcal{F} \).
	It suffices to show either \( p = \mathcal{F} \) or \( \mathcal{F} = \mathcal{P}(S) \).
	We may assume \( p \ne \mathcal{F} \) and pick \( A \in \mathcal{F} \setminus p \).
	By (d) we then have \( S \setminus A \in p \).
	Hence \( A \) and \( S \setminus A \) are both in \( \mathcal{F} \) and it follows that \( \mathcal{F} = \mathcal{P}(S) \).

    Now let \( p \) be a maximal proper filter.
    By (b), it suffices to show \( p^* \subseteq p \).
    Let \( A \in p^* \) and note the collection \( \{A \cap B : B \in p \} \) satisfies the finite intersection property.
    So it follows \( \{A \cap B : B \in p \}^{**} \) is a proper filter that contains \( p \) and has \( A \) as a member. 
    Therefore by maximality of \( p \) we have \( A \in p \).
\end{proof}

Next we define a binary operation on the set of all collections on \( S \), adopting Glasscock and Le \cite[Sec.~2.1.1]{Glasscock:2024aa} notation, that will allow us given any stack to produce a grill minimal with respect to a certain condition (Proposition~\ref{proposition:grill}(b)).
This and the following results essentially goes back to Choquet \cite{Choquet:1947aa}, but the elegant presentation via the binary operation goes back at least as early as Akin \cite[Ch.~2]{Akin1997}:
\begin{definition}
\label{definition:binary-operation}
	Let \( \mathcal{F}_1, \mathcal{F}_2 \) both be collections on \( S \) and define
	\[
		\mathcal{F}_1 \sqcap \mathcal{F}_2 := \{ A_1 \cap A_2 : A_1 \in \mathcal{F}_1 \text{ and } A_2 \in \mathcal{F}_2 \}.
	\]
\end{definition}
\begin{remark}
\label{remark:binary-operation}
	Akin denotes this by \( \mathcal{F}_1 \cdot \mathcal{F}_2 \), but we don't adopt this notation since we'll give another meaning to \( \mathcal{F}_1 \cdot \mathcal{F}_2 \) in Section~\ref{section:derived-sets}.
\end{remark}
This binary operation is clearly associative, commutative, has \( \emptyset \) as the zero element, has \( \{S\} \) as the identity, is isotone (\( \mathcal{F} \subseteq \mathcal{F}_1' \implies \mathcal{F} \sqcap \mathcal{F}_1 \subseteq \mathcal{F} \sqcap \mathcal{F}_1' \)), and a stack \( \mathcal{F} \) is a filter if and only if \( \mathcal{F} \sqcap \mathcal{F} \subseteq \mathcal{F} \).
Moreover, it also satisfies a Galois connections among all stacks on \( S \):

\begin{proposition}
\label{proposition:binary-operation}
    Let \( S \) be a nonempty set, let \( \mathcal{F} \) be a collection, and let \( \mathcal{F}_1, \mathcal{F}_2 \) both be stacks on \( S \).
    Then \( \mathcal{F} \sqcap \mathcal{F}_1 \subseteq \mathcal{F}_2 \) if and only if \( \mathcal{F}_1 \subseteq \bigl( \mathcal{F} \sqcap (\mathcal{F}_2)^* \bigr)^* \).    
\end{proposition}
\begin{proof}
    By Proposition~\ref{proposition:mesh-operator} we can, equivalently, show  \( \mathcal{F} \sqcap \mathcal{F}_1 \subseteq \mathcal{F}_2 \) if and only if \( \mathcal{F} \sqcap \mathcal{F}_2^* \subseteq \mathcal{F}_1^* \).

    \smallskip
    
    \textbf{(\( \Rightarrow \)):}
    Let \( A \in \mathcal{F} \) and \( A_2 \in \mathcal{F}_2^* \).
    For \( A_1 \in \mathcal{F}_1 \) we have, by assumption, \( A \cap A_1 \in \mathcal{F}_2 \) and so \( A \cap A_1 \cap A_2 \ne \emptyset \).
    Therefore \( A \cap A_2 \in \mathcal{F}_1^* \).

    \smallskip

    \textbf{(\( \Leftarrow \)):}
    Let \( A \in \mathcal{F} \) and \( A_1 \in \mathcal{F}_1 \) be such that \( A \cap A_1 \not\in \mathcal{F}_2 \). 
    Since \( \mathcal{F}_2 \) is a stack, by Proposition~\ref{proposition:stack}(b), we have
    \(
        (S \setminus A) \cup (S \setminus A_1) = S \setminus (A \cap A_1) \in \mathcal{F}_2^*
    \).
    Therefore
    \(
        A \setminus A_1 = A \cap \bigl( S \setminus (A \cap A_1) \bigr) \in \mathcal{F} \sqcap \mathcal{F}_2^*
    \)
    and \( (A \setminus A_1) \cap A_1 = \emptyset \) implies \( A \setminus A_1 \not\in \mathcal{F}_1^* \).
\end{proof}

\begin{proposition}
\label{proposition:grill}
    Let \( S \) be a nonempty set and let \( \mathcal{F}, \mathcal{F}_1, \mathcal{F}_2 \) all be stacks on \( S \).
   \begin{itemize}
         \item[(a)]
            \begin{itemize}
                \item[(i)] \( \mathcal{F}_1 \sqcap \mathcal{F}_2 \) is a stack on \( S \).

                \item[(ii)] \( \emptyset \ne \mathcal{F}_1 \sqcap \mathcal{F}_2 \) if and only if \( \emptyset \ne \mathcal{F}_1 \) and \( \emptyset \ne \mathcal{F}_2 \).
                In particular, \(  \emptyset \ne \mathcal{F}_1 \) and \( \emptyset \ne \mathcal{F}_2 \) implies \( \mathcal{F}_1 \cup \mathcal{F}_2 \subseteq \mathcal{F}_1 \sqcap \mathcal{F}_2 \).

                \item[(iii)] \( \emptyset \not\in \mathcal{F}_1 \sqcap \mathcal{F}_2 \) if and only if \( \mathcal{F}_1 \subseteq \mathcal{F}_2^* \) and \( \emptyset \not\in \mathcal{F}_1 \) and \( \emptyset \not\in \mathcal{F}_2 \).
            \end{itemize}

        \item[(b)]
            \( \mathcal{F} \sqcap (\mathcal{F}^*) \) is a grill on \( S \), which is proper if and only if \( \mathcal{F} \) is proper.
            In particular, \( \mathcal{F} \sqcap \mathcal{F}_1 \subseteq \mathcal{F} \) if and only if \( \mathcal{F} \sqcap (\mathcal{F}^*) \subseteq \mathcal{F}_1^* \).

     \end{itemize}
\end{proposition}
\begin{proof}
    \textbf{(a):}
    To see statement (i) let \( A \in \mathcal{F}_1 \sqcap \mathcal{F}_2 \) and \( A \subseteq B \subseteq S \).
    Pick \( A_1 \in \mathcal{F}_1 \) and \( A_2 \in \mathcal{F}_2 \) with \( A = A_1 \cap A_2 \).
    Then each \( A_i \cup B \in \mathcal{F}_i \), and so \( B = A \cup B = (A_1 \cap A_2 ) \cup B = (A_1 \cup B) \cap (A_2 \cup B) \in \mathcal{F}_1 \sqcap \mathcal{F}_2 \).

    Statements (ii) and (iii) follow immediately from the definitions.
    The `in particular' assertion in (ii), because \( \mathcal{F}_i \) is a stack, follows from \( \emptyset \ne \mathcal{F}_i \) if and only if \( S \in \mathcal{F}_i \).

    \smallskip

    \textbf{(b):}
    See, for instance, the proof of \cite[Proposition~2.5(h)]{Christopherson:2022wr} showing \( \mathcal{F} \sqcap (\mathcal{F}^*) \) is a grill on \( S \).
	(Note in \cite{Christopherson:2022wr} all stacks are proper, but besides the first sentence, the proof doesn't use properness.)

    By (a, ii) and (a, iii) we have \( \mathcal{F} \sqcap (\mathcal{F}^*) \) is proper if and only if \( \mathcal{F} \) is proper.
    The `in particular' assertion follows from Proposition~\ref{proposition:binary-operation}.
\end{proof}

\begin{example}
\label{example:grill}
    Put \( \mathcal{F} := \{ A \subseteq \mathbb{N} : 2\mathbb{N} +1 \subseteq A \text{ or } 2\mathbb{N}+2 \subseteq A \} \) and note \( \mathcal{F} \) is a proper stack and \( \mathcal{F}^* = \{ A \subseteq \mathbb{N} : (2\mathbb{N}+1) \cap A \ne \emptyset \text{ and } (2\mathbb{N}+2) \cap A \ne \emptyset \} \).
    Further \( \mathcal{F} \) is not a filter and, hence, \( \mathcal{F}^* \) is not a grill. 
    In this case, we have \( \mathcal{F} \sqcap (\mathcal{F}^*) = \{ A \subseteq \mathbb{N} : (2\mathbb{N} + 1) \cap A \ne \emptyset \text{ or } (2\mathbb{N} + 2) \cap A \ne \emptyset \} \) and   \( \left(\mathcal{F} \sqcap (\mathcal{F}^*)\right)^* = \{ A \subseteq \mathbb{N} : 2\mathbb{N} + 1 \subseteq A \text{ and } 2\mathbb{N}+2 \subseteq A \} \).
\end{example}

\subsection{Brief review of algebraic structure of the Stone--Čech compactification}
\label{section:review-algebraic-structure}
We'll end this section with a brief review of a bit more notation and the algebraic structure of the Stone--Čech compactification of a discrete semigroup.

\paragraph{Notation} 
We typically let \( S \) denote an infinite discrete semigroup with associative binary operation \( \cdot \), which we indicate by juxtaposition or by \( + \), if \( S \) is commutative.
For \( A \subseteq S \) and \( h \in S \) define \( h^{-1}A := \{ x \in S: hx \in A \} \), or \( -h + A := \{ x \in S : h + x \in A \} \) if \( S \) is commutative.
(This is just the preimage of the function \( x \mapsto hx \).)
Given a set \( X \) we let \( \mathcal{P}_f(X) \) denote the collection of all nonempty finite subsets of \( X \).
In \( (\mathbb{N}, +) \), where we give many of our examples, for \( k, \ell \in \mathbb{N} \) we put \( [k] := \{1, 2, \ldots, k\} \) and \( [k, \ell] := \{ k, k+1, \ldots, \ell \} \).

Let \( \beta S \) be the set of all ultrafilters on \( S \), and, following the typographical conventions of \cite{Hindman:2012tq} we typically let lowercase letters \( p, q, r \) denote ultrafilters on \( S \).
For \( A \subseteq S \) and \( \mathcal{F} \) a filter on \( S \) put \( \overline{A} := \{ p \in \beta S : A \in p \} \) and \( \overline{\mathcal{F}} := \{ p \in \beta S : \mathcal{F} \subseteq p \} \).
For proper filters \( \mathcal{F} \) we have \( p \in \overline{\mathcal{F}} \) if and only if \( p \subseteq \mathcal{F}^* \).

In Sections~\ref{section:relative-piecewise-syndetic} and \ref{section:collectionwise}, we let \( \bigtimes_{i \in I} A_i \) denote the Cartesian product of a family \( \langle A_i \rangle_{i \in I} \).

\medskip

We identify elements of \( S \) with the principal ultrafilters on \( \beta S \), and, as usual, pretend \( S \subseteq \beta S \).
The collection \( \{ \overline{A} : A \subseteq S \} \) forms a clopen basis for a compact Hausdorff topology on \( \beta S \), which is the Stone--Čech compactification on \( S \), where we have the closure \( c\ell_{\beta S}(A) = \overline{A} \).
There is a one-to-one correspondence between filters on \( S \) and closed subsets of \( \beta S \) via \( \mathcal{F} \mapsto \overline{\mathcal{F}} \) \cite[Theorem 3.20]{Hindman:2012tq}.

By \cite[Theorem~4.1]{Hindman:2012tq} the semigroup operation on \( S \) can be extended to a semigroup operation on \( \beta S \), and moreover for \( p, q \in \beta S \) we have \( A \in p \cdot q \iff \{ h \in S : h^{-1}A \in q \} \in p \) \cite[Theorem~4.12]{Hindman:2012tq}.
For \( \mathcal{F}, \mathcal{G} \) filters on \( S \) and \( q \in \beta S \) we put \( \overline{\mathcal{F}} \cdot \overline{\mathcal{G}} := \{ p \cdot q : p \in \overline{\mathcal{F}} \text{ and } q \in \overline{\mathcal{G}} \} \) and \( \overline{\mathcal{F}} \cdot q := \{p \cdot q : p \in \overline{\mathcal{F}} \} \).
Moreover, \( (\beta S, \cdot) \) is a compact \define{right topological semigroup}, which means for all \( q \in \beta S \) we have \( p \mapsto p \cdot q \) is continuous for all \( p \in \beta S \).
\emph{Any} compact right topological Hausdorff semigroup has idempotent elements \cite[Theorem~2.5]{Hindman:2012tq}, and a smallest ideal which is the union of all minimal (with respect to inclusion) left ideals and also contains idempotents itself, which are called \define{minimal idempotents} \cite[Theorem~2.8]{Hindman:2012tq}.
For a semigroup \( T \) we let \( K(T) \) denote the smallest ideal of \( T \), and let \( E(T) \) denote the collection of all idempotent elements in \( T \).

While all of our results are stated in terms of arbitrary semigroups \( (S, \cdot) \), all of our examples are given in the positive integers under addition \( (\mathbb{N}, +) \).
In particular, the \define{upper Banach density} for \( A \subseteq \mathbb{N} \) is 
\[
    \mathsf{bd}^\star(A) := \sup \{ \alpha \in [0, 1] : (\forall \, n \in \mathbb{N}) \; \{ x \in \mathbb{N} : |A \cap ([n] + x)| \ge \alpha n \} \text{ is infinite}  \}.
\]
Hence \( \mathsf{bd}^\star(A) > \alpha \) means there are arbitrarily long blocks of consecutive positive integers where the proportion of elements of \( A \) contained in infinitely many blocks of a given length is at least \( \alpha \). 
As is well known this form of the upper Banach density is equivalent to several other possible definitions (for instance see \cite{Grekos2010}).

\begin{remark}
\label{remark:left-right-and-improper}
	\mbox{}
	\begin{itemize}
		\item[(a)]
			Several of our references take \( \beta S \) to be left topological instead of right topological. 
			In cases were we cite a particular result (or definition), the appropriate left-right switches of their proofs and statements give the appropriate right topological version.
			
		\item[(b)]
			Unlike most of our references we allow \( \mathcal{P}(S) \) and \( \emptyset \) as the improper filter and grill, respectively.
			When citing a previous result we either include the assumption of properness in the statement or simply note in the proof why the improper case is also valid (often for either vacuous or trivial reasons).
	\end{itemize}	
\end{remark}

\section{Derived sets and products of collections}
\label{section:derived-sets}

This section's goal is to (re)introduce the concept of a ``derived'' set along a collection on a semigroup \( S \) (Definition~\ref{definition:derived-set}), define another associative binary operation on collections (Definition~\ref{definition:product-of-collections}), and prove a few fundamental properties on derived sets and this binary operation (Proposition~\ref{proposition:derived-set} and Corollary~\ref{corollary:derived-set}).
The results in this section are (slight) generalizations of previous observations throughout the literature on algebra in the Stone--Čech compactification. 
In particular, we note Glasscock and Le \cite[Sec.~2]{Glasscock:2024aa} obtained similar results for stacks on the positive integers under addition \( (\mathbb{N}, +) \), but their proofs and statements readily generalize to arbitrary semigroups.
Similar to Section~\ref{section:preliminaries}, the proofs of this section are also routine, but for convenience we provide the full details. 

\begin{definition}
\label{definition:derived-set}
	Let \( S \) be a semigroup, let \( \mathcal{F} \) be a collection on \( S \), and let \( A \subseteq S \).
	Put 
	\[ 
		A'(\mathcal{F}) := \{ h \in S : h^{-1}A \in \mathcal{F} \}, 
	\]
	which we call the \define{derived} set of \( A \) along \( \mathcal{F} \).
\end{definition}
\begin{remark}
\label{remark:derived-set}
	The notation in Definition~\ref{definition:derived-set} is borrowed from \cite[Remark~2.18]{Griffin:2024aa}, but other notations often appear.
	For instance in Papazyan \cite{Papazyan1989, Papazyan:1990aa} as \( \Omega_\mathcal{F}(A) \), in Beiglböck \cite{Beiglbock:2011aa} as \( q^{-1}A \) (where \( q \) is an ultrafilter), and recently in  Glasscock and Le \cite{Glasscock:2024aa} as \( A - \mathcal{F} \) for derived sets in \( (\mathbb{N}, +) \).
	Also see Tserunyan \cite{Tserunyan:2016aa} for a related (but stronger notion) of differentiability of subsets of a semigroup.
\end{remark}

\begin{example}
\label{example:derived-set}
    Recall the filters \( \mathcal{C} \), \( \mathcal{H} \), and \( \mathcal{D} \) from Example~\ref{example:filters} and let \( A \subseteq \mathbb{N} \).
    In \( (\mathbb{N}, +) \) we have the derived set of \( A \) along several collections as follows.
    \begin{itemize}
        \item[(a)] 
            \( A'(\mathcal{C}) = \bigl\{ h \in \mathbb{N} : (\exists \, k \in \mathbb{N}) \; h + \{k, k+1, \ldots\} \subseteq A \bigr\} \) and
            \( A'(\mathcal{C}^*) = \bigl\{ h \in \mathbb{N} : (\forall \, k \in \mathbb{N})(\exists \, x \in \mathbb{N}) \; h + k + x \in A \bigr\} \).
            In \( (\mathbb{N}, +) \) the cofinite filter \( \mathcal{C} \) is translation-invariant, that is, for every \( h \in \mathbb{N} \) we have \( A \in \mathcal{C} \) if and only if \( -h+A \in  \mathcal{C} \).
            This means \( A'(\mathcal{C}) \ne \emptyset \iff A'(\mathcal{C}) = \mathbb{N} \iff A \in \mathcal{C} \) and \( A'(\mathcal{C}^*) \ne \emptyset \iff A'(\mathcal{C}^*) = \mathbb{N} \iff A \in \mathcal{C}^* \) .

        \item[(b)] 
            \( A'(\mathcal{H}) = \{ h \in \mathbb{N} : (\exists \, k \in \mathbb{N}) \; h +  \mathrm{FS}(\langle x_n \rangle_{n = k}^\infty ) \subseteq A \} \) and 
            \( A'(\mathcal{H}^*) \{ h \in \mathbb{N} : (\forall \, k \in \mathbb{N})(\exists \, H \in \mathcal{P}_f(\{k, k+1, \ldots\}) \; h + \sum_{n \in H} x_n \in A \} \).

        \item[(c)]
            Similarly to (a), upper Banach density is translation-invariant, that is, for every \( h \in \mathbb{N} \) we have \( \mathsf{bd}^\star(-h+A) = \mathsf{bd}^\star(A) \).
            Hence it follows that \( A'(\mathcal{D}) = \{ h \in \mathbb{N} : \mathsf{bd}^\star(\mathbb{N} \setminus A) = 0 \} \)
            and
            \( A'(\mathcal{D}^*) = \{ h \in \mathbb{N} : \mathsf{bd}^\star(A) > 0 \} \).
            Therefore \( A'(\mathcal{D}) \ne \emptyset \iff A'(\mathcal{D}) = \mathbb{N} \iff A \in \mathcal{D} \) and  \( A'(\mathcal{D}^*) \ne \emptyset \iff A'(\mathcal{D}^*) = \mathbb{N} \iff A \in \mathcal{D}^* \).

    \end{itemize}
    
\end{example}

Observe that \( A'(\emptyset) = \emptyset \) and \( A'(\mathcal{P}(S)) = S \).
As is known, we can characterize syndetic and thick sets in terms of derived sets: \( A \) is syndetic if and only if for all \( q \in \beta S \) we have \( A'(q)  \ne \emptyset \); \( A \) is thick if and only if there exists \( q \in \beta S \) such that \( A'(q) = S \); while  \( A \) is piecewise syndetic if and only if there exists \( q \in \beta S \) such that \( A'(q) \) is syndetic.
(We won't stop to verify these facts here since they will follow from our more general   Corollary~\ref{corollary:relative-syndetic-thick} and Theorem~\ref{theorem:relative-piecewise-syndetic}.)

Next we record a few properties that immediately follow from Definition~\ref{definition:derived-set}.
Proposition~\ref{proposition:derived-set} is a generalization of Papazyan's observation \cite[Lemma~1.3]{Papazyan:1990aa}. %
\begin{proposition}
\label{proposition:derived-set}
	Let \( S \) be a semigroup.
	\begin{itemize}
		\item[(a)]
		Let \( \mathcal{F}, \mathcal{G} \) both be stacks on \( S \) and let \( A_1,  A_2 \subseteq S \)
		\begin{itemize}
			\item[(i)]
			 \( (A_1 \cap A_2)'(\mathcal{F}) \subseteq A_1'(\mathcal{F}) \cap A_2'(\mathcal{F}) \).
			 If \( \mathcal{F} \) is a filter, then equality holds.
			 
			\item[(ii)]
			\( A_1'(\mathcal{G}) \cup A_2'(\mathcal{G}) \subseteq (A_1 \cup A_2)'(\mathcal{G}) \).
			If \( \mathcal{G} \) is a grill, then equality holds.
			
			\item[(iii)]
			If \( q \) is an ultrafilter on \( S \), then \( (A_1 \cap A_2)'(q) = A_1'(q) \cap A_2'(q) \) and \( A_1'(q) \cup A_2'(q) = (A_1 \cup A_2)'(q) \).
			
			\item[(iv)]
			\( A_1'(\mathcal{F}) \cap A_2'(\mathcal{G}) \subseteq (A_1 \cap A_2)'(\mathcal{F} \sqcap \mathcal{G}) \).
		\end{itemize}
		
		\item[(b)]
			Let \( \mathcal{F}, \mathcal{G} \) both be stacks on \( S \) and let \( A \subseteq S \).
			\begin{itemize}
				\item[(i)]
				\( ( S \setminus A)'(\mathcal{F}) = S \setminus A'(\mathcal{F}^*) \).
				
				\item[(ii)]
				If \( \mathcal{F} \) is a proper filter, then \( (S \setminus A)'(\mathcal{F}) \subseteq S \setminus A'(\mathcal{F}) \).
				
				\item[(iii)]
				If \( \mathcal{G} \) is a proper grill, then \( S \setminus A'(\mathcal{G}) \subseteq (S \setminus A)'(\mathcal{G}) \).
				
				\item[(iv)]
				If \( q \) is an ultrafilter, then \( S \setminus A'(q) = (S \setminus A)'(q) \).
				In particular, for \( B \subseteq S \) we have \( (A \setminus B)'(q) = A'(q) \setminus B'(q) \).
			\end{itemize}
			
		\item[(c)]
			\begin{itemize}
				\item[(i)]
				If \( \mathcal{F}_1, \mathcal{F}_2 \) are both collections on \( S \) and \( A \subseteq S \), then \( \mathcal{F}_1 \subseteq \mathcal{F}_2 \) implies \( A'(\mathcal{F}_1) \subseteq A'(\mathcal{F}_2) \).
					
				\item[(ii)]
				If \( \langle \mathcal{F}_i \rangle_{i \in I} \) is a family of collections on \( S \) and \( A \subseteq S \), then  \( A'(\bigcup_{i \in I} \mathcal{F}_i) = \bigcup_{i \in I} A'(\mathcal{F}_i) \) and \( A'(\bigcap_{i \in I} \mathcal{F}_i) = \bigcap_{i \in I} A'(\mathcal{F}_i) \).

				\item[(iii)]
				If \( \mathcal{F} \) is a stack and \( A_1 \subseteq A_2 \), then \( A_1'(\mathcal{F}) \subseteq A_2'(\mathcal{F}) \).
			\end{itemize}

		\item[(d)]
			If \( \mathcal{F} \) is a collection on \( S \), \( A \subseteq S \), and \( g \in S \), then \( (g^{-1}A)'(\mathcal{F}) = g^{-1}\bigl( A'(\mathcal{F}) \bigr)\).

	\end{itemize}
\end{proposition}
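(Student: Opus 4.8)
The plan is to reduce every clause to a handful of elementary identities about the preimage maps $x \mapsto hx$ and then read each assertion off the membership properties of stacks, filters, and grills from Section~\ref{section:preliminaries}. The identities I will use, each valid for all $h, g \in S$ and $A, A_1, A_2 \subseteq S$, are $h^{-1}(A_1 \cap A_2) = h^{-1}A_1 \cap h^{-1}A_2$, $h^{-1}(A_1 \cup A_2) = h^{-1}A_1 \cup h^{-1}A_2$, $h^{-1}(S \setminus A) = S \setminus h^{-1}A$, and $h^{-1}(g^{-1}A) = (gh)^{-1}A$; the first three are immediate from the definition of $h^{-1}(\cdot)$, and the last uses associativity of $S$. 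Since $A'(\mathcal{F}) = \{ h \in S : h^{-1}A \in \mathcal{F} \}$, applying these identities pointwise in $h$ converts each set-theoretic claim in the statement into a claim about which Boolean combinations of the sets $h^{-1}A_i$ belong to $\mathcal{F}$, $\mathcal{G}$, or $\mathcal{F}*\mathcal{G}$ --- precisely the data controlled by Propositions~\ref{proposition:stack} and \ref{proposition:filter-grill} and Definition~\ref{definition:binary-operation}.

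For (a)(i), (a)(ii), and (c)(iii) I will fix $h$ and invoke upward closure: from $h^{-1}(A_1 \cap A_2) \in \mathcal{F}$ and $h^{-1}(A_1 \cap A_2) \subseteq h^{-1}A_i$ I get $h^{-1}A_i \in \mathcal{F}$, and dually for unions; the reverse inclusions in (a)(i) and (a)(ii) are exactly closure under finite intersections (for a filter) and the Ramsey property (for a grill). Then (a)(iii) is the conjunction of (a)(i) and (a)(ii), an ultrafilter being both, and (a)(iv) follows the moment one notes that the identity $h^{-1}(A_1 \cap A_2) = h^{-1}A_1 \cap h^{-1}A_2$ exhibits the derived-set witness in the $A_1 \cap A_2$ form required by $\mathcal{F}*\mathcal{G}$. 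Parts (c)(i) and (c)(ii) are purely formal: membership of $h^{-1}A$ in a union or intersection of collections unwinds directly through the definition and needs no hypothesis on the collections.

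The heart of (b) is (b)(i): unwinding, $(S \setminus A)'(\mathcal{F}) = \{ h : S \setminus h^{-1}A \in \mathcal{F} \}$ while $S \setminus A'(\mathcal{F}^*) = \{ h : h^{-1}A \notin \mathcal{F}^* \}$, and these sets agree by Proposition~\ref{proposition:stack}(b) (namely $B \in \mathcal{F}^*$ iff $S \setminus B \notin \mathcal{F}$ for a stack $\mathcal{F}$). From here (b)(ii) and (b)(iii) can be obtained either directly --- a proper filter contains no set together with its complement, and a proper grill contains $S$, so the Ramsey property places one of $h^{-1}A$, $S \setminus h^{-1}A$ into $\mathcal{G}$ --- or by composing (b)(i) with the monotonicity of $A'(\cdot)$ in its collection slot (part (c)(i)) together with the self-containments $\mathcal{F} \subseteq \mathcal{F}^*$ (proper filter) and $\mathcal{G}^* \subseteq \mathcal{G}$ (proper grill) coming from Proposition~\ref{proposition:filter-grill}. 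Part (b)(iv) is then immediate, since an ultrafilter satisfies $q = q^*$ and (b)(i) collapses to $(S \setminus A)'(q) = S \setminus A'(q)$; the ``in particular'' clause follows by writing $A \setminus B = A \cap (S \setminus B)$ and applying (a)(iii) and (b)(iv).

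Finally, (d) is a single computation: $h^{-1}(g^{-1}A) = (gh)^{-1}A$ gives $h \in (g^{-1}A)'(\mathcal{F}) \iff gh \in A'(\mathcal{F}) \iff h \in g^{-1}\bigl(A'(\mathcal{F})\bigr)$. I do not expect a genuine obstacle anywhere in this proposition --- it is essentially a bookkeeping exercise --- so the one point I will be careful about is the improper-collection edge cases: the self-containments used for (b)(ii) and (b)(iii) genuinely fail for $\mathcal{P}(S)$, so I will either restrict those two clauses to proper filters and grills or note explicitly, in the spirit of Remark~\ref{remark:left-right-and-improper}(b), how the improper case is to be read.
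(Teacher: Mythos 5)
Your proposal is correct and follows essentially the same route as the paper's proof: pointwise unwinding via the preimage identities \( h^{-1}(A_1 \cap A_2) = h^{-1}A_1 \cap h^{-1}A_2 \), etc., with upward closure, filter/grill membership properties, and Proposition~\ref{proposition:stack}(b) supplying each clause, and (b)(i) together with \( q = q^* \) yielding (b)(iv). Your caution about the improper edge cases in (b)(ii) and (b)(iii) is warranted --- those inclusions do fail for \( \mathcal{F} = \mathcal{P}(S) \) and \( \mathcal{G} = \emptyset \), a point the paper's own proof (which tacitly uses that a filter cannot contain both a set and its complement) passes over.
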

\begin{proof}
	\textbf{(a):}
	To see statement (i) note
	\begin{align*}
		h \in (A_1 \cap A_2)'(\mathcal{F}) &\iff h^{-1}A_1 \cap h^{-1}A_2 = h^{-1}(A_1 \cap A_2) \in \mathcal{F} \\
		&\implies h^{-1}A_1 \in \mathcal{F} \text{ and } h^{-1}A_2 \in \mathcal{F} &\text{(since \( \mathcal{F} \) is a stack)} \\
		&\iff h \in A_1'(\mathcal{F}) \text{ and } h \in A_2'(\mathcal{F}) \\
		&\iff h \in A_1'(\mathcal{F}) \cap A_2'(\mathcal{F}).
	\end{align*}
	When \( \mathcal{F} \) is a filter, observe the above forward implication `\( \implies \)' is also valid when reversed `\( \impliedby \)'.
	
	Similarly, to see statement (ii) note
	\begin{align*}
		h \in A_1'(\mathcal{G}) \cup A_2'(\mathcal{G}) &\iff h^{-1}A_1 \in \mathcal{G} \text{ or } h^{-1}A_2 \in \mathcal{G} \\
		&\implies h^{-1}(A_1 \cup A_2) = h^{-1}A_1 \cup h^{-1}A_2 \in \mathcal{G} &\text{(since \( \mathcal{G} \) is a stack)} \\
		&\iff h \in (A_1 \cup A_2)'(\mathcal{G}).
	\end{align*}
	When \( \mathcal{G} \) is a grill, observe the above forward implication `\( \implies \)' is also valid when reversed `\( \impliedby \)'. 
	
	Statement (iii) is then immediate from (i) and (ii).
	
	To see statement (iv) note 
	\begin{align*}
		h \in A_1'(\mathcal{F}) \cap A_2'(\mathcal{G}) &\iff h^{-1}A_1 \in \mathcal{F} \text{ and } h^{-1}A_2 \in \mathcal{G} \\ 
		&\implies h^{-1}(A_1 \cap A_2) = h^{-1}A_1 \cap h^{-1}A_2 \in \mathcal{F} * \mathcal{G} \\
		&\iff h \in (A_1 \cap A_2)'(\mathcal{F} \sqcap \mathcal{G}).
	\end{align*}

	\smallskip
	
	\textbf{(b):}
	To see statement (i), note
	\[
		h \in (S \setminus A)'(\mathcal{F}) \iff S \setminus (h^{-1}A) = h^{-1}(S \setminus A) \in \mathcal{F} \iff h^{-1}A \not\in \mathcal{F}^* \iff h \not\in A'(\mathcal{F}^*) \iff h \in S \setminus A'(\mathcal{F}^*).
	\]
	
	Now to see statement (ii), note
	\begin{align*}
		h \in (S \setminus A)'(\mathcal{F}) &\iff S \setminus (h^{-1}A) \in \mathcal{F} \\
		&\implies h^{-1}A \not\in \mathcal{F} &\text{(since \( \mathcal{F} \) is a proper filter)} \\
		&\iff h \not\in A'(\mathcal{F}) \iff h \in S \setminus A'(\mathcal{F}).
	\end{align*}
	
	Similarly, to see statement (iii), note
	\begin{align*}
		h \in S \setminus A'(\mathcal{G}) &\iff h^{-1}A \not\in \mathcal{G} \\
		&\implies h^{-1}(S \setminus A) = S \setminus (h^{-1}A) \in \mathcal{G} &\text{(since \( \mathcal{G} \) is a proper grill)} \\
		&\iff h \in (S \setminus A)'(\mathcal{G}).
	\end{align*}	

	Statement (iv) is immediate from (i).
	Since \( A \setminus B = A \cap (S \setminus B) \), the `in particular' statement follows from (a, iii).
	
	\smallskip
	
	\textbf{(c):}
	To see statement (i) note \( h \in A'(\mathcal{F}_1)  \iff h^{-1}A \in \mathcal{F}_1 \subseteq \mathcal{F}_2 \implies h \in A'(\mathcal{F}_2) \).
	
	For statement (ii) note
	\begin{align*}
		\textstyle h \in A'(\bigcup_{i \in I} \mathcal{F}_i) &\iff h^{-1}A \in \bigcup_{i \in I} \mathcal{F}_i\\
        &\iff (\exists \, i \in I) \; h^{-1}A \in \mathcal{F}_i\\
        &\iff (\exists \, i \in I)  \; h \in A'(\mathcal{F}_i)\\
        &\iff h \in \bigcup_{i \in I} A'(\mathcal{F}_i),
	\end{align*}
	and
	\begin{align*}
		\textstyle h \in A'(\bigcap_{i \in I} \mathcal{F}_i) &\iff h^{-1}A \in \bigcap_{i \in I} \mathcal{F}_i\\
        &\iff (\forall \, i \in I) \; h^{-1}A \in \mathcal{F}_i\\
        &\iff (\forall \, i \in I)  \; h \in A'(\mathcal{F}_i)\\
        &\iff h \in \bigcap_{i \in I} A'(\mathcal{F}_i).
	\end{align*}

    For statement (iii) we have \( h \in A_1'(\mathcal{F}) \iff h^{-1}A_1 \in \mathcal{F} \implies h^{-1}A_2 \in \mathcal{F} \iff h \in A_2'(\mathcal{F}) \), where the middle implication follows from \( h^{-1}A_1 \subseteq h^{-1}A_2 \) and \( \mathcal{F} \) is a stack.
    
	\smallskip
	
	\textbf{(d):}
	Note		
	\[
		h \in (g^{-1}A)'(\mathcal{F}) \iff (gh)^{-1}A = h^{-1}(g^{-1}A) \in \mathcal{F} \iff gh \in A'(\mathcal{F}) \iff h \in g^{-1}\bigl(A'(\mathcal{F})\bigr).
	\]
\end{proof}

From Proposition~\ref{proposition:derived-set}(a) and (b) we note for an ultrafilter \( q \) we have \( A \mapsto A'(q) \) is a Boolean algebra endomorphism on \( \mathcal{P}(S) \).
Similarly, for a filter \( \mathcal{F} \) and grill \( \mathcal{G} \) we have \( A \mapsto A'(\mathcal{F}) \) and \( A \mapsto A'(\mathcal{G}) \) are  meet-semilattice and join-semilattice endomorphisms, respectively.
If we instead fix a set \( A \subseteq S \) and allow the collections to vary \( \mathcal{F} \mapsto A'(\mathcal{F}) \) then Proposition~\ref{proposition:derived-set}(c) shows this function is isotone and respects arbitrary unions and intersections.

Next we define a product of collections on \( S \), motivated by the characterization of product of ultrafilters (see Section~\ref{section:review-algebraic-structure}), and prove some basic properties for this product (which are corollaries of Proposition~\ref{proposition:derived-set}). 
Part of the corollary is a generalization of a Berglund and Hindman result \cite[Lemma~5.15]{Berglund1984} showing the binary operation \( \mathcal{F} \cdot \mathcal{G} \) is associative when both \( \mathcal{F}, \mathcal{G} \) are filters.

\begin{definition}
\label{definition:product-of-collections}
	Let \( \mathcal{F}, \mathcal{G} \) both be collections on a semigroup \( S \) and define
	\[
		\mathcal{F} \cdot \mathcal{G} := \{ A \subseteq S : A'(\mathcal{G}) \in \mathcal{F} \}.
	\]
\end{definition}
Following Glasscock and Le \cite[Sec.~1.2.1]{Glasscock:2024aa} we also single out the notions of `translation invariant' and `idempotent' collections (see \cite[Sec.~2.1.4]{Glasscock:2024aa} for why one would consider inclusion instead of equality), which generalizes from idempotent filters and their implicit use in Papazyan \cite{Papazyan1989} and explicit use in Krautzberger \cite{Krautzberger2009}:
\begin{definition}
\label{definition:idempotent-collection}
    Let \( \mathcal{F} \) be a collection on a semigroup \( S \).
    \begin{itemize}
        \item[(a)]
            \( \mathcal{F} \) is \define{translation invariant} if and only if \( \mathcal{F} \subseteq \{S\} \cdot \mathcal{F} \).

        \item[(b)]
             \( \mathcal{F} \) is \define{idempotent} if and only if \( \mathcal{F} \subseteq \mathcal{F} \cdot \mathcal{F} \).
    \end{itemize}
\end{definition}

We directly have translation invariant collections are idempotent (or see Corollary~\ref{corollary:derived-set}(b)).

\begin{example}
\label{example:idempotent-filters}
    Recall the filters \( \mathcal{C}\) , \( \mathcal{H} \), and \( \mathcal{D} \) from Example~\ref{example:filters}.
    As is well known, all three of these filters are idempotent:
    \begin{itemize}
        \item[(a)] 
            As noted in Example~\ref{example:derived-set}(a), \( \mathcal{C} \) is translation invariant and hence idempotent.

        \item[(b)]
            The filter \( \mathcal{H} \) may not be invariant.
            (For instance if every term of the sequence \( \langle x_n \rangle_{n = 1}^\infty \) is an even positive integer, then \( -1 + \mathrm{FS}(\langle x_n \rangle_{n = 1}^\infty) \not\in \mathcal{H} \).)

            However, to see \( \mathcal{H} \) is idempotent note 
            \( 
                A \in \mathcal{H} + \mathcal{H} 
                    \iff 
                (\exists \, k \in \mathbb{N})
                (\forall \, H \in \mathcal{P}_f(\{k, k+1, \dots\}))
                (\exists \, \ell \in \mathbb{N}) 
                    \; \sum_{n \in H} x_n + \mathrm{FS}(\langle x_n \rangle_{n = \ell}^\infty) \subseteq A
            \),
            and observe for \( A \subseteq \mathbb{N} \) such that there exists \( k \in \mathbb{N} \) with \( \mathrm{FS}(\langle x_n \rangle_{n = k}^\infty) \subseteq A \) then for each \( H \in \mathcal{P}_f(\{k, k+1, \ldots\}) \) we have \( \sum_{n \in H} x_n + \mathrm{FS}(\langle x_n \rangle_{n = \max H + 1}^\infty) \subseteq A \).

        \item[(c)]
            Similar to (a), \( \mathcal{D} \) is translation invariant and hence idempotent.            
    \end{itemize}

\end{example}

Observe \( \emptyset \cdot \mathcal{G} = \emptyset \) and \( \mathcal{P}(S) \cdot \mathcal{G} = \mathcal{P}(S) \).
Hence both the improper stacks \( \emptyset \) and \( \mathcal{P}(S) \) are left zeros for this binary operation.
On the other hand we have
\[
	\mathcal{F} \cdot \emptyset = \begin{cases}
		\emptyset &\text{if } \emptyset \notin \mathcal{F} \\
		\mathcal{P}(S) &\text{if } \emptyset \in \mathcal{F}
	\end{cases}
	\qquad \text{ and } \qquad
	\mathcal{F} \cdot \mathcal{P}(S) = \begin{cases}
		\emptyset &\text{if } S \notin \mathcal{F} \\
		\mathcal{P}(S) &\text{if } S \in \mathcal{F}.
	\end{cases}
\]
In particular, this binary operation is not commutative~---~even when the underlying semigroup \( S \) is commutative.

\begin{corollary}
\label{corollary:derived-set}
	Let \( S \) be a semigroup.
	\begin{itemize}
		\item[(a)]
		Let \( \mathcal{F}, \mathcal{G}, \mathcal{H} \) all be collections on \( S \).
		\begin{itemize}
			\item[(i)]
			For \( A \subseteq S \) we have \( \overline{A'(\mathcal{G})} = \{ p \in \beta S : A \in p \cdot \mathcal{G}\} \).
			
			\item[(ii)]
			 For \( A \subseteq S \) we have \( A'(\mathcal{F} \cdot \mathcal{G}) = \bigl( A'(\mathcal{G}) \bigr)'(\mathcal{F}) \).
			 
			\item[(iii)]
			 \( \mathcal{F} \cdot (\mathcal{G} \cdot \mathcal{H}) = (\mathcal{F} \cdot \mathcal{G}) \cdot \mathcal{H} \).		
		\end{itemize}

		\item[(b)]
		If \( \mathcal{F}_1, \mathcal{F}_2, \) and \( \mathcal{G} \) are all collections on \( S \), then \( \mathcal{F}_1 \subseteq \mathcal{F}_2 \) implies \( \mathcal{F}_1 \cdot \mathcal{G} \subseteq \mathcal{F}_2 \cdot \mathcal{G} \).

		\item[(c)]
		If \( \mathcal{F} \) is a stack on \( S \) and both \( \mathcal{G}_1, \mathcal{G}_2 \)  are collections on \( S \), then \(\mathcal{G}_1 \subseteq \mathcal{G}_2 \) implies \( \mathcal{F} \cdot \mathcal{G}_1 \subseteq \mathcal{F} \cdot \mathcal{G}_2 \).
		
		\item[(d)]
		Let \( \mathcal{F}, \mathcal{G} \) both be stacks on \( S \).
		\begin{itemize}
			\item[(i)]
			\( \mathcal{F} \cdot \mathcal{G} \) is a stack, which is proper if both \( \mathcal{F} \) and \( \mathcal{G} \) are proper.
					
			\item[(ii)]
			\( (\mathcal{F} \cdot \mathcal{G})^* = \mathcal{F}^* \cdot \mathcal{G}^* \).

			\item[(iii)]
			If both \( \mathcal{F} \) and \( \mathcal{G} \) are filters on \( S \), then \( \mathcal{F} \cdot \mathcal{G} \) is a filter.
			Dually,  if both \( \mathcal{F} \) and \( \mathcal{G} \) are grills on \( S \), then \( \mathcal{F} \cdot \mathcal{G} \) is a grill.
			In particular, if \( p, q \) are ultrafilters on \( S \), then \( p \cdot q \) is an ultrafilter.

		\end{itemize}

    \item[(e)]
        Let \( \mathcal{F}_1, \mathcal{F}_2, \mathcal{G}_1, \mathcal{G}_2 \)  all be stacks on \( S \).
        \begin{itemize}
            \item[(i)] 
                 \( (\mathcal{F}_1 \cdot \mathcal{G}_1) \sqcap  (\mathcal{F}_2 \cdot \mathcal{G}_2) \subseteq (\mathcal{F}_1 \sqcap \mathcal{F}_2) \cdot (\mathcal{G}_1 \sqcap \mathcal{G}_2) \). 

            \item[(ii)]
                If \( \mathcal{F}_1 \) and \( \mathcal{F}_2 \) are both translation invariant, then \( \mathcal{F}_1 \sqcap \mathcal{F}_2 \) is translation invariant.

            \item[(iii)]
                If \( \mathcal{F}_1 \) and \( \mathcal{F}_2 \) are both idempotent, then \( \mathcal{F}_1 \sqcap \mathcal{F}_2 \) is idempotent.
        \end{itemize}

	\end{itemize}	
\end{corollary}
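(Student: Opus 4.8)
The plan is to derive everything from Proposition~\ref{proposition:derived-set} and the definitions of the two binary operations, proceeding in the stated order since later parts lean on earlier ones.

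For part (a): statement (i) is essentially a rephrasing of the known characterization $A \in p \cdot \mathcal{G} \iff A'(\mathcal{G}) \in p$ combined with the definition of $\overline{A'(\mathcal{G})}$; I would just unwind $\overline{A'(\mathcal{G})} = \{ p \in \beta S : A'(\mathcal{G}) \in p \}$ and then apply Definition~\ref{definition:product-of-collections} with $\mathcal{F} = p$ to rewrite the membership condition as $A \in p \cdot \mathcal{G}$. For (ii), I would chain biconditionals directly from the definitions:
\begin{align*}
	h \in A'(\mathcal{F} \cdot \mathcal{G}) &\iff h^{-1}A \in \mathcal{F} \cdot \mathcal{G} \\
	&\iff (h^{-1}A)'(\mathcal{G}) \in \mathcal{F} \\
	&\iff h^{-1}\bigl( A'(\mathcal{G}) \bigr) \in \mathcal{F} &\text{(Proposition~\ref{proposition:derived-set}(d))} \\
	&\iff h \in \bigl( A'(\mathcal{G}) \bigr)'(\mathcal{F}).
\end{align*}
Then (iii) follows by applying (ii) twice: $A \in \mathcal{F} \cdot (\mathcal{G} \cdot \mathcal{H}) \iff A'(\mathcal{G} \cdot \mathcal{H}) \in \mathcal{F} \iff \bigl(A'(\mathcal{H})\bigr)'(\mathcal{G}) \in \mathcal{F}$, while $A \in (\mathcal{F} \cdot \mathcal{G}) \cdot \mathcal{H} \iff A'(\mathcal{H}) \in \mathcal{F} \cdot \mathcal{G} \iff \bigl(A'(\mathcal{H})\bigr)'(\mathcal{G}) \in \mathcal{F}$, and the two right-hand sides coincide.

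Parts (b) and (c) are the isotonicity statements. For (b), $A \in \mathcal{F}_1 \cdot \mathcal{G}$ means $A'(\mathcal{G}) \in \mathcal{F}_1 \subseteq \mathcal{F}_2$, so $A \in \mathcal{F}_2 \cdot \mathcal{G}$; no stack hypothesis is needed. For (c), $A \in \mathcal{F} \cdot \mathcal{G}_1$ means $A'(\mathcal{G}_1) \in \mathcal{F}$, and since $\mathcal{G}_1 \subseteq \mathcal{G}_2$ gives $A'(\mathcal{G}_1) \subseteq A'(\mathcal{G}_2)$ by Proposition~\ref{proposition:derived-set}(c,i), the fact that $\mathcal{F}$ is a stack upgrades this to $A'(\mathcal{G}_2) \in \mathcal{F}$, hence $A \in \mathcal{F} \cdot \mathcal{G}_2$.

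For part (d): statement (i), that $\mathcal{F} \cdot \mathcal{G}$ is a stack, follows from (c) applied with $\mathcal{G}_1 \subseteq \mathcal{G}_2$ replaced by $A \subseteq B$ — actually more directly: if $A \in \mathcal{F} \cdot \mathcal{G}$ and $A \subseteq B$, then $A'(\mathcal{G}) \subseteq B'(\mathcal{G})$ by Proposition~\ref{proposition:derived-set}(c,iii), so $B'(\mathcal{G}) \in \mathcal{F}$; properness will follow from (ii) together with Corollary~\ref{corollary:mesh-operator}(b) once (ii) is in hand, or can be checked directly by noting $S'(\mathcal{G}) = S \in \mathcal{F}$ and $\emptyset'(\mathcal{G}) = \emptyset \notin \mathcal{F}$ when both are proper. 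Statement (ii), $(\mathcal{F} \cdot \mathcal{G})^* = \mathcal{F}^* \cdot \mathcal{G}^*$, is the technical heart; I would argue via the string
\[
	A \in (\mathcal{F} \cdot \mathcal{G})^* \iff S \setminus A \notin \mathcal{F} \cdot \mathcal{G} \iff (S \setminus A)'(\mathcal{G}) \notin \mathcal{F},
\]
using Proposition~\ref{proposition:stack}(b) (valid since $\mathcal{F} \cdot \mathcal{G}$ is a stack by (i)), then apply Proposition~\ref{proposition:derived-set}(b,i) to rewrite $(S \setminus A)'(\mathcal{G}) = S \setminus A'(\mathcal{G}^*)$, so the condition becomes $S \setminus A'(\mathcal{G}^*) \notin \mathcal{F}$, i.e. $A'(\mathcal{G}^*) \in \mathcal{F}^*$ (again Proposition~\ref{proposition:stack}(b), now for the stack $\mathcal{F}$), which is exactly $A \in \mathcal{F}^* \cdot \mathcal{G}^*$. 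Finally (iii): if $\mathcal{F}, \mathcal{G}$ are filters, then for $A_1, A_2 \in \mathcal{F} \cdot \mathcal{G}$ we have $A_i'(\mathcal{G}) \in \mathcal{F}$, and $(A_1 \cap A_2)'(\mathcal{G}) = A_1'(\mathcal{G}) \cap A_2'(\mathcal{G}) \in \mathcal{F}$ by Proposition~\ref{proposition:derived-set}(a,i) (equality since $\mathcal{G}$ is a filter) and closure of $\mathcal{F}$ under intersections, so $A_1 \cap A_2 \in \mathcal{F} \cdot \mathcal{G}$; together with (i) this makes $\mathcal{F} \cdot \mathcal{G}$ a filter. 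The grill case is dual, using Proposition~\ref{proposition:derived-set}(a,ii) and (iii): if $A_1 \cup A_2 \in \mathcal{F} \cdot \mathcal{G}$ then $A_1'(\mathcal{G}) \cup A_2'(\mathcal{G}) = (A_1 \cup A_2)'(\mathcal{G}) \in \mathcal{F}$, and since $\mathcal{F}$ is a grill one of $A_i'(\mathcal{G})$ lies in $\mathcal{F}$; alternatively, the grill statement is the mesh-dual of the filter statement via (ii) and Proposition~\ref{proposition:filter-grill}(a). The ultrafilter case is then immediate since an ultrafilter is both a filter and a grill.

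The step I expect to require the most care is part (d, ii): one must be careful that Proposition~\ref{proposition:stack}(b) is being invoked for genuine stacks (which is why (i) must come first), and that the $\mathcal{G}^*$ appearing via Proposition~\ref{proposition:derived-set}(b,i) lands on the correct side of the product — the asymmetry of $\cdot$ means there is no room for sloppiness about which argument the mesh is applied to. Everything else is a direct unwinding of definitions.
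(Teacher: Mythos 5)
Your proposal is correct and takes essentially the same route as the paper's proof: each part is the same unwinding of Definition~\ref{definition:product-of-collections} with the same appeals to Proposition~\ref{proposition:derived-set} and Proposition~\ref{proposition:stack}(b), with (a,iii) deduced from (a,ii) and (d,ii) handled by the identical mesh/complement chain. The only cosmetic difference is that in (d,iii) you also verify the grill case directly, whereas the paper derives it from the filter case via (d,ii) and Proposition~\ref{proposition:filter-grill}(a).
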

\begin{proof}
	\textbf{(a):}
	To see statement (i) note \( p \in \overline{A'(\mathcal{G})} \iff A'(\mathcal{G}) \in p \iff A \in p \cdot \mathcal{G} \).

	To see statement (ii) note
	\begin{align*}
		h \in A'(\mathcal{F} \cdot \mathcal{G}) &\iff h^{-1}A \in \mathcal{F} \cdot \mathcal{G} \\
		&\iff h^{-1}\bigl(A'(\mathcal{G})\bigr) = (h^{-1}A)'(\mathcal{G}) \in \mathcal{F} &\text{(equality from Proposition~\ref{proposition:derived-set}(d))} \\
		&\iff h \in \bigl(A'(\mathcal{G})\bigr)'(\mathcal{F}).
	\end{align*}
	
	Finally, to see statement (iii) note
	\begin{align*}
		A \in \mathcal{F} \cdot (\mathcal{G} \cdot \mathcal{H}) &\iff \bigl(A'(\mathcal{H})\bigr)'(\mathcal{G}) = A'(\mathcal{G} \cdot \mathcal{H}) \in \mathcal{F} &\text{(equality by (ii))} \\
		&\iff A'(\mathcal{H}) \in \mathcal{F} \cdot \mathcal{G} \\
		&\iff A \in (\mathcal{F} \cdot \mathcal{G}) \cdot \mathcal{H}.
	\end{align*}
	
	\smallskip
	
	\textbf{(b):}
	We have \( A \in \mathcal{F}_1 \cdot \mathcal{G} \implies A'(\mathcal{G}) \in \mathcal{F}_1 \subseteq \mathcal{F}_2 \implies A \in \mathcal{F}_2 \cdot \mathcal{G} \).
	
	\smallskip

	\textbf{(c):}
	By Proposition~\ref{proposition:derived-set}(c, i) we have \( A'(\mathcal{G}_1) \subseteq A'(\mathcal{G}_2) \).
	Hence from \( A \in \mathcal{F} \cdot \mathcal{G}_1 \) it follows, since \( \mathcal{F} \) is a stack, that \( A'(\mathcal{G}_2) \in \mathcal{F} \), that is, \( A \in \mathcal{F} \cdot \mathcal{G}_2 \).	

	\smallskip

	\textbf{(d):}
	To see \( \mathcal{F} \cdot \mathcal{G} \) is a stack let \( A \in \mathcal{F} \cdot \mathcal{G} \) and \( A \subseteq B \subseteq S \).
	Then \( A'(\mathcal{G}) \in \mathcal{F} \) and since \( \mathcal{G} \) is a stack, by Proposition~\ref{proposition:derived-set}(c, iii), we have \( A'(\mathcal{G}) \subseteq B'(\mathcal{G}) \).
	Now \( \mathcal{F} \) a stack implies \( B'(\mathcal{G}) \in \mathcal{F} \), that is, \( B \in \mathcal{F} \cdot \mathcal{G} \).
	For properness, we have \( \emptyset \ne \mathcal{F} \cdot \mathcal{G} \) since \( S'(\mathcal{G}) = S \in \mathcal{F} \) (proper stack implies \( S \) in both \( \mathcal{F}, \mathcal{G} \)), and \( \emptyset \notin \mathcal{F} \cdot \mathcal{G} \) since \( \emptyset'(\mathcal{G}) = \emptyset \notin \mathcal{F} \).	
	
	To see statement (ii) note
	\begin{align*}
		A \in (\mathcal{F} \cdot \mathcal{G})^* &\iff S \setminus A \not\in \mathcal{F} \cdot \mathcal{G} &\text{((i) and Proposition~\ref{proposition:stack}(b))}\\
		&\iff S \setminus A'(\mathcal{G}^*) = (S \setminus A)'(\mathcal{G}) \not\in \mathcal{F} &\text{(equality by Proposition~\ref{proposition:derived-set}(b, i))}\\
		&\iff A'(\mathcal{G}^*) \in \mathcal{F}^* \iff A \in \mathcal{F}^* \cdot \mathcal{G}^*. &\text{(Proposition~\ref{proposition:stack}(b))}
	\end{align*}

	For statement (iii), by Proposition~\ref{proposition:filter-grill}(a) and (ii), the first and second assertions are equivalent. 
	Hence it suffices to only show one, say the first one.
	The `in particular' assertion then immediately follows.
	
	The first assertion was shown, for proper filters, by Berglund and Hindman \cite[Lemma~5.15]{Berglund1984} but the same argument works for either \( \mathcal{F} \) or \( \mathcal{G} \) improper.
	(Let \( A_1, A_2 \in \mathcal{F} \cdot \mathcal{G} \).
	Then \( A_1'(\mathcal{G}), A_2'(\mathcal{G}) \in \mathcal{F} \) and so, since \( \mathcal{G} \) and \( \mathcal{F} \) are filters, by Proposition~\ref{proposition:derived-set}(a, i), we have \( (A_1 \cap A_2)'(\mathcal{G}) = A_1'(\mathcal{G}) \cap A_2'(\mathcal{G}) \in \mathcal{F} \).)

    \smallskip

    \textbf{(e):}
    First, note by statement (d, i) and Proposition~\ref{proposition:grill}(a, i) we have \( \mathcal{F}_1 \cdot \mathcal{G}_1 \), \( \mathcal{F}_2 \cdot \mathcal{G}_2 \), \( \mathcal{F}_1 \sqcap \mathcal{F}_2 \), and \( \mathcal{G}_1 \sqcap \mathcal{G}_2 \) are all stacks.

    To see statement (i), let \( A_1 \in \mathcal{F}_1 \cdot \mathcal{G}_1 \) and \( A_2 \in \mathcal{F}_2 \cdot \mathcal{G}_2 \).
    Then \( A_1'(\mathcal{F}_1) \in \mathcal{G}_1 \) and \( A_2'(\mathcal{G}_2) \) implies, by Proposition~\ref{proposition:derived-set}(a, iv), that \( A_1'(\mathcal{F}_1) \cap  A_2'(\mathcal{G}_2) \subseteq (A_1 \cap A_2)'(\mathcal{G}_1 \sqcap \mathcal{G}_2) \).
    Hence since \(  A_1'(\mathcal{F}_1) \cap  A_2'(\mathcal{G}_2) \in \mathcal{F}_1 \sqcap \mathcal{F}_2 \) and  \( \mathcal{F}_1 \sqcap \mathcal{F}_2 \) is a stack we have \( (A_1 \cap A_2)'(\mathcal{G}_1 \sqcap \mathcal{G}_2) \in \mathcal{F}_1 \sqcap \mathcal{F}_2 \), that is, \( A_1 \cap A_2 \in (\mathcal{F}_1 \sqcap \mathcal{F}_2) \cdot (\mathcal{G}_1 \sqcap \mathcal{G}_2) \).

    To see statement (ii), note we have \( \mathcal{F}_1 \sqcap \mathcal{F}_2 \subseteq (\{S\} \cdot \mathcal{F}_1) \sqcap (\{S\} \cdot \mathcal{F}_2) \subseteq (\{S\} \sqcap \{S\}) \cdot (\mathcal{F}_1 \sqcap \mathcal{F}_2) = \{S\} \cdot (\mathcal{F}_1 \sqcap \mathcal{F}_2) \).

    To see statement (iii), note we have  \( \mathcal{F}_1 \sqcap \mathcal{F}_2 \subseteq (\mathcal{F}_1  \cdot \mathcal{F}_1) \sqcap (\mathcal{F}_2  \cdot \mathcal{F}_2) \subseteq (\mathcal{F}_1 \sqcap \mathcal{F}_2) \cdot (\mathcal{F}_1 \sqcap \mathcal{F}_2) \).
	
\end{proof}

\section{Characterizations of relative notion via derived sets}
\label{section:relative-notions}

This section's goal is to characterize relative notions of syndetic and thick sets (Definition~\ref{definition:relative-syndetic-thick}) in terms of derived sets along (ultra)filters (Theorem~\ref{theorem:relative-syndetic-thick} and Corollary~\ref{corollary:relative-syndetic-thick}).
Building on this characterization we then, under certain natural conditions, characterize relative notions of piecewise syndetic (Definition~\ref{definition:relative-piecewise-syndetic}) in terms of derived sets along ultrafilters (Theorem~\ref{theorem:relative-piecewise-syndetic}).  
These notions generalize the well studied ``absolute'' notions of syndetic, thick, and piecewise syndetic.
However, it is not a generalization in the sense that syndetic sets are guaranteed to be ``relatively'' syndetic (Example~\ref{example:product-filters}(b)), but rather that being syndetic (in the semigroup \(S\)) relative to the filter \( \{S\} \) is equivalent to being syndetic in \(S \). 
As an application we define a relative ``kernel'' \( K(\mathcal{F}, \mathcal{G}) \) (Definition~\ref{definition:relative-kernel}) and prove this set is, in some ways, analogous to the smallest ideal \( K(\beta S) \) (Theorem~\ref{theorem:relative-kernel} and Corollary~\ref{corollary:relative-kernel}).
We also state a problem on finding ultrafilters which are minimal or maximal with respect to preorders defined in terms of derived sets (Problem~\ref{problem:characterization-problem}).

\subsection{Relative syndetic and thick sets}
\label{section:relative-syndetic-thick}

\begin{definition}
\label{definition:relative-syndetic-thick} %
	Let \( S \) be a semigroup, let \( \mathcal{F}, \mathcal{G} \) both be collections on \( S \), and let \( A \subseteq S \).
	\begin{itemize}
		\item[(a)]
			\( A \) is \define{\( (\mathcal{F}, \mathcal{G}) \)-syndetic} if and only if for all \( B \in \mathcal{F} \) there exists \( H \in \mathcal{P}_f(B) \) such that \[ \bigcup_{h \in H} h^{-1}A \in \mathcal{G}^{**}. \]
			
		\item[(b)] 
			\( A \) is \define{\( (\mathcal{F}, \mathcal{G}) \)-thick} if and only if there exists \( B \in \mathcal{F} \) such that for all \( H \in \mathcal{P}_f(B) \) we have \[ \bigcap_{h \in H} h^{-1}A \in~\mathcal{G}^*. \]

		\item[(c)]
			We define the following collections:
		\begin{itemize}
			\item[(i)] 
				\( \mathsf{Syn}(\mathcal{F}, \mathcal{G}) := \{ A \subseteq S : A \text{ is } (\mathcal{F}, \mathcal{G})\text{-syndetic} \} \).
			
			\item[(ii)]
				\( \mathsf{Thick}(\mathcal{F}, \mathcal{G}) := \{ A \subseteq S : A \text{ is } (\mathcal{F}, \mathcal{G})\text{-thick} \} \).
				
		\end{itemize}
	\end{itemize}
\end{definition}

Shuungula, Zelenyuk, and Zelenyuk \cite[Section~2]{Shuungula:2009ty} defined the notation and name of `\( (\mathcal{F}, \mathcal{G}) \)-syndetic' for proper filters \( \mathcal{F} \) and \( \mathcal{G} \).
The notion of `\( (\mathcal{F}, \mathcal{G}) \)-thick' was introduced in \cite{Christopherson:2022wr}, mainly to ensure, when \( \mathcal{F}, \mathcal{G} \) are proper stacks, \( \mathsf{Thick}(\mathcal{F}, \mathcal{G}) = \mathsf{Syn}(\mathcal{F}, \mathcal{G})^* \) and directly obtain the collection of relative piecewise syndetic sets (see Definition~\ref{definition:relative-piecewise-syndetic}) form a grill (following from Proposition~\ref{proposition:grill}(b)). 
We soon note in Proposition~\ref{proposition:relative-syndetic-thick} this duality between relative syndetic and thick sets via the mesh also holds in a slightly more general context.
However, we first note the easy observation that we may assume either \( \mathcal{F} \) or \( \mathcal{G} \) is a stack:
\begin{proposition}
\label{proposition:assumption-of-stack}
    Let \( \mathcal{F}, \mathcal{G} \) both be collections on a semigroup \( S \).
    \begin{itemize}
        \item[(a)]
            We have \( \mathsf{Syn}(\mathcal{F}, \mathcal{G}) =  \mathsf{Syn}(\mathcal{F}^{**}, \mathcal{G}) \) and \( \mathsf{Syn}(\mathcal{F}, \mathcal{G}) =  \mathsf{Syn}(\mathcal{F}, \mathcal{G}^{**}) \).
            In particular we have \( \mathsf{Syn}(\mathcal{F}, \mathcal{G}) =  \mathsf{Syn}(\mathcal{F}^{**}, \mathcal{G}^{**}) \).

        \item[(b)]
            We have \( \mathsf{Thick}(\mathcal{F}, \mathcal{G}) =  \mathsf{Thick}(\mathcal{F}^{**}, \mathcal{G}) \) and \( \mathsf{Thick}(\mathcal{F}, \mathcal{G}) =  \mathsf{Thick}(\mathcal{F}, \mathcal{G}^{**}) \).
            In particular we have \( \mathsf{Thick}(\mathcal{F}, \mathcal{G}) =  \mathsf{Thick}(\mathcal{F}^{**}, \mathcal{G}^{**}) \).
    \end{itemize}
\end{proposition}
\begin{proof}
    First, note membership in \( \mathsf{Syn}(\mathcal{F}, \mathcal{G}) \) or \( \mathsf{Thick}(\mathcal{F}, \mathcal{G}) \) is determined as follows:
    \[
        A \in \mathsf{Syn}\left(\f{F},\f{G}\right) \iff \left(\forall \, B \in \f{F}\right) \left(\exists \, H \in \f{P}_f\left(B\right)\right) \left( \exists \, C \in \f{G} \right) \left(\forall \,c \in C\right) \left(\exists \,h \in H\right) \; hc \in A
    \] 
    and
    \[
        A \in \mathsf{Thick}\left(\f{F},\f{G}\right) \iff \left(\exists \, B \in \f{F}\right) \left(\forall \, H \in \f{P}_f\left(B\right)\right) \left( \forall \, C \in \f{G} \right) \left(\exists \,c \in C\right) \left(\forall \,h \in H\right) \; hc \in A.
    \]
    Then the assertions follow from  Proposition~\ref{proposition:stack}(a) and (c), and Corollary~\ref{corollary:mesh-operator}(c).

\end{proof}

Observe \( \mathsf{Syn}(\emptyset, \mathcal{G}) = \mathcal{P}(S) \), \( \mathsf{Syn}(\mathcal{P}(S), \mathcal{G}) = \mathsf{Syn}(\{\emptyset\}, \mathcal{G}) = \emptyset \), while
\[
	\mathsf{Syn}(\mathcal{F}, \emptyset) = \begin{cases}
		\emptyset &\text{if } \emptyset \ne \mathcal{F} \\
		\mathcal{P}(S) &\text{if } \emptyset = \mathcal{F}
	\end{cases}
	\qquad \text{ and } \qquad
	\mathsf{Syn}(\mathcal{F}, \mathcal{P}(S)) = \begin{cases}
		\emptyset &\text{if } \emptyset \in \mathcal{F} \\
		\mathcal{P}(S) &\text{if } \emptyset \notin \mathcal{F}
	\end{cases}.
\]
Hence if \( \mathcal{F} \) is a proper collection, then \( \mathsf{Syn}(\mathcal{F}, \emptyset) = \emptyset \) and \( \mathsf{Syn}(\mathcal{F}, \mathcal{P}(S)) = \mathcal{P}(S) \).
\begin{proposition}
\label{proposition:relative-syndetic-thick}
	Let \( S \) be a semigroup and let both \( \mathcal{F}, \mathcal{G} \) be collections.
	\begin{itemize}
		\item[(a)]
		\( \mathsf{Syn}(\mathcal{F}, \mathcal{G}) \) is a stack on \( S \), which is proper if both \( \mathcal{F} \) and \( \mathcal{G} \) are proper. 

		\item[(b)]
		\( \mathsf{Syn}(\mathcal{F}, \mathcal{G})^* =  \mathsf{Thick}(\mathcal{F}, \mathcal{G}) \).
		In particular, \( \mathsf{Thick}(\mathcal{F}, \mathcal{G}) \) is a stack on \( S \).

	\end{itemize}	
\end{proposition}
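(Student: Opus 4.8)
The plan is to prove (a) directly from the definitions together with the hypothesis that $\mathcal{G}$ is a stack, and then to obtain (b) formally from (a) and Proposition~\ref{proposition:stack}(b), applied once to the stack $\mathsf{Syn}(\mathcal{F},\mathcal{G})$ and once to the stack $\mathcal{G}$.

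For (a), to see $\mathsf{Syn}(\mathcal{F},\mathcal{G})$ is a stack I would take $A \in \mathsf{Syn}(\mathcal{F},\mathcal{G})$ and $A \subseteq C \subseteq S$; given $B \in \mathcal{F}$, choose $H \in \mathcal{P}_f(B)$ with $\bigcup_{h \in H} h^{-1}A \in \mathcal{G}$, observe $\bigcup_{h \in H} h^{-1}A \subseteq \bigcup_{h \in H} h^{-1}C$, and conclude $\bigcup_{h \in H} h^{-1}C \in \mathcal{G}$ since $\mathcal{G}$ is upward closed; hence $C$ is $(\mathcal{F},\mathcal{G})$-syndetic. (Note this uses only that $\mathcal{G}$ is a stack, not that $\mathcal{F}$ is one.) For the properness clause, assuming $\mathcal{F}$ and $\mathcal{G}$ are proper: since $h^{-1}S = S$ for every $h \in S$ and a proper stack contains $S$, picking any $B \in \mathcal{F}$ (which is nonempty because $\mathcal{F}$ is proper) and any $H \in \mathcal{P}_f(B)$ shows $S \in \mathsf{Syn}(\mathcal{F},\mathcal{G})$, so $\mathsf{Syn}(\mathcal{F},\mathcal{G}) \ne \emptyset$; and $\emptyset \notin \mathsf{Syn}(\mathcal{F},\mathcal{G})$, since otherwise for a witnessing $B \in \mathcal{F}$ and $H \in \mathcal{P}_f(B)$ we would get $\emptyset = \bigcup_{h \in H} h^{-1}\emptyset \in \mathcal{G}$, contradicting properness of $\mathcal{G}$.

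For (b), by part (a) the collection $\mathsf{Syn}(\mathcal{F},\mathcal{G})$ is a stack, so Proposition~\ref{proposition:stack}(b) gives, for each $A \subseteq S$, that $A \in \mathsf{Syn}(\mathcal{F},\mathcal{G})^*$ if and only if $S \setminus A \notin \mathsf{Syn}(\mathcal{F},\mathcal{G})$. Unwinding the definition, $S \setminus A \notin \mathsf{Syn}(\mathcal{F},\mathcal{G})$ means there is $B \in \mathcal{F}$ such that for every $H \in \mathcal{P}_f(B)$ we have $\bigcup_{h \in H} h^{-1}(S \setminus A) \notin \mathcal{G}$; using $h^{-1}(S \setminus A) = S \setminus h^{-1}A$ together with De Morgan gives $\bigcup_{h \in H} h^{-1}(S \setminus A) = S \setminus \bigcap_{h \in H} h^{-1}A$, and then Proposition~\ref{proposition:stack}(b) applied to the stack $\mathcal{G}$ converts the condition $S \setminus \bigcap_{h \in H} h^{-1}A \notin \mathcal{G}$ into $\bigcap_{h \in H} h^{-1}A \in \mathcal{G}^*$. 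The resulting statement — there is $B \in \mathcal{F}$ with $\bigcap_{h \in H} h^{-1}A \in \mathcal{G}^*$ for all $H \in \mathcal{P}_f(B)$ — is precisely $A \in \mathsf{Thick}(\mathcal{F},\mathcal{G})$, which proves $\mathsf{Syn}(\mathcal{F},\mathcal{G})^* = \mathsf{Thick}(\mathcal{F},\mathcal{G})$; the ``in particular'' clause is then immediate from Proposition~\ref{proposition:stack}(a), as $\mathcal{H}^*$ is a stack for any collection $\mathcal{H}$.

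I do not anticipate a genuine obstacle: both parts are formal manipulations. The one place to be careful is the quantifier bookkeeping in (b) — correctly negating the ``for all $B$ there exists $H$'' structure in the definition of $(\mathcal{F},\mathcal{G})$-syndetic, and checking that the complement and De Morgan rewriting of the finite unions and intersections lines up with $h^{-1}(S \setminus A) = S \setminus h^{-1}A$ — together with tracking exactly where the properness hypotheses are used in (a) (and verifying they are not needed in (b), which requires only that $\mathcal{G}$ is a stack).
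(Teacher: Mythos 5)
Your proposal is correct and follows essentially the same route as the paper: a direct upward-closure argument for (a) using only that \( \mathcal{G} \) is a stack, and for (b) the chain \( A \in \mathsf{Syn}(\mathcal{F},\mathcal{G})^* \iff S\setminus A \notin \mathsf{Syn}(\mathcal{F},\mathcal{G}) \) via Proposition~\ref{proposition:stack}(b), followed by quantifier negation, De Morgan, and a second application of Proposition~\ref{proposition:stack}(b) to \( \mathcal{G} \). The only cosmetic difference is that the paper dispatches the improper cases of (a) separately via the displayed observation before Proposition~\ref{proposition:relative-syndetic-thick}, whereas your uniform argument covers them vacuously.
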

\begin{proof}
   	\textbf{(a):}
    To see \( \mathsf{Syn}(\mathcal{F}, \mathcal{G}) \) is a stack let \( A_1 \in \mathsf{Syn}(\mathcal{F}, \mathcal{G}) \) and let  \( A_1 \subseteq A_2 \subseteq S \).
    For \( B \in \mathcal{F} \) pick \( H \in \mathcal{P}_f(B) \) with \( \bigcup_{h \in H} h^{-1}A_1 \in \mathcal{G}^{**} \).
    Then \( \mathcal{G}^{**} \) is a stack (Proposition~\ref{proposition:stack}(a)) and \( \bigcup_{h \in H} h^{-1}A_1 \subseteq \bigcup_{h \in H} h^{-1}A_2 \) implies \( A_2 \in \mathsf{Syn}(\mathcal{F}, \mathcal{G}^{**}) = \mathsf{Syn}(\mathcal{F}, \mathcal{G}) \), where the equality follows from Proposition~\ref{proposition:assumption-of-stack}(a).

    Properness for \( \mathcal{F} \) and \( \mathcal{G} \) implies \( S \in \mathsf{Syn}(\mathcal{F}, \mathcal{G}) \) (so \( \emptyset \ne \mathsf{Syn}(\mathcal{F}, \mathcal{G}) \)) and \( \emptyset \notin \mathsf{Syn}(\mathcal{F}, \mathcal{G}) \).

    \smallskip

	\textbf{(b):}
    By Proposition~\ref{proposition:assumption-of-stack} we may replace \( \mathcal{F}, \mathcal{G} \) by their stack closures \( \mathcal{F}^{**}, \mathcal{G}^{**} \) (see Remark~\ref{remark:closure-operator}).
    The same (easy) verification of \cite[Proposition~3.2]{Christopherson:2022wr}~---~where we assumed both \( \mathcal{F} \) and \( \mathcal{G} \) are proper stacks~---~works in this case too (via Proposition~\ref{proposition:stack}(b)).
	The `in particular' statement follows from Proposition~\ref{proposition:stack}(a).
\end{proof}

\begin{remark}
\label{remark:assumption-of-stack}
    In light of Proposition~\ref{proposition:assumption-of-stack} and to slightly ease the notation, in what follows we will typically assume both \( \mathcal{F}, \mathcal{G} \) are at least stacks.
    For then by Proposition~\ref{proposition:stack}(d) we have \( \mathcal{F} = \mathcal{F}^{**} \) and  \( \mathcal{G} = \mathcal{G}^{**} \).
    
    Hence some of our stated results can be made slightly stronger by only assuming both  \( \mathcal{F}, \mathcal{G} \) are collections. 
    With these weaker assumptions, the remaining assertions and proofs can be made valid by replacing occurrences of `\( \mathcal{G} \)' with `\( \mathcal{G}^{**} \)' while simplifying with \( \mathcal{G}^{***} = \mathcal{G}^* \) (Corollary~\ref{corollary:mesh-operator}(c)) when possible.
\end{remark}

As outlined in \cite[Section~3]{Christopherson:2022wr} special cases of both relative notions appear more-or-less explicitly throughout the literature.
\begin{example}
\label{example:relative-syndetic-thick}
    Recall the filters \( \mathcal{C} \), \( \mathcal{H} \), and \( \mathcal{D} \) from Example~\ref{example:filters}.
    Then, by Proposition~\ref{proposition:relative-syndetic-thick}(b), up to ``mesh duality'' we have at most 9 distinct notions of \( \mathsf{Syn}(\mathcal{F}, \mathcal{G}) \) when \( \mathcal{F}, \mathcal{G} \in \{ \mathcal{C}, \mathcal{H}, \mathcal{D} \} \).
    We'll just highlight a small subset of relative syndetic and thick in this context.
    For filters \( \mathcal{F}, \mathcal{G} \) with corresponding bases \(\f{B_F},\f{B_G}\) (that is, \( \f{F} = \f{B_F}^{**} \) and  \( \f{G} = \f{B_G}^{**} \)) we apply Proposition~\ref{proposition:assumption-of-stack} to just characterize \( \mathsf{Syn}(\mathcal{B}_\mathcal{F}, \mathcal{B}_\mathcal{G}) \).
    In the following, let \( \forall^\infty \) be the ``for all but finitely many'' and \( \exists^\infty \) be the ``there exists infinitely many'' quantifiers.

    \begin{itemize}
        \item[(a)]
           In \( (\mathbb{N}, +) \) we have
            \[
                \mathsf{Syn}\left(\mathcal{C},\mathcal{C}\right) 
                    = 
                    \{ A \subseteq \mathbb{N} : 
                        \left(\forall \, k \in \mathbb{N}\right)
                        \left(\exists \, \ell >k\right) 
                        \left(\forall^{\infty} \, x \in \mathbb{N} \right)
                        \left(\exists \, h \in \left[k, \ell\right]\right) 
                            \; h + x \in A \}
            \]
            and
            \[
                \mathsf{Thick}\left(\mathcal{C},\mathcal{C}\right) 
                = 
                    \{ A \subseteq \mathbb{N} : 
                        \left(\exists \, k \in \mathbb{N}\right)
                        \left(\forall \, \ell > k \right) 
                        \left(\exists^{\infty} \, x \in \mathbb{N} \right) 
                        \left(\forall \, h \in \left[k,\ell\right] \right) 
                            \; h + x \in A\}.
            \]
            As noted in \cite[Example~3.6]{Christopherson:2022wr}, we have \( \mathsf{Thick} = \mathsf{Thick}(\mathcal{C}, \{\mathbb{N}\}) = \mathsf{Thick}(\{\mathbb{N}\}, \mathcal{C}) = \mathsf{Thick}(\mathcal{C}, \mathcal{C}) \), and dually we have \( \mathsf{Syn} = \mathsf{Syn}(\mathcal{C}, \mathcal{C}) \).
            Hence in certain cases the ``relative'' notions are equivalent to the absolute notions.

        \item[(b)]
            We have
            \[
                \begin{split}
                    \mathsf{Syn}(\mathcal{H}, \mathcal{H}) 
                    = 
                    \{ A \subseteq \mathbb{N} : 
                        (\forall \, k \in \mathbb{N}) 
                        &(\exists \, \text{finite sequence } \langle H_i \rangle_{i = 1}^n \text{ in } \mathcal{P}_f(\{k, k+1, \ldots\})) \\
                        &\hspace{-2.5em}(\exists \, m \in \mathbb{N} \text{ with } m > \max {\textstyle \bigcup_{i = 1}^n H_i})
                        (\forall \, G \in \mathcal{P}_f(\{m, m+1, \ldots\})) \\
                        &\hspace{-0.5em}(\exists \, i \in \{1, 2, \ldots, n\}) 
                          \; {\textstyle \sum_{s \in H_i \cup G} x_s \in A} \}.
                \end{split}
             \]
             and
            \[
                \begin{split}
                    \mathsf{Thick}(\mathcal{H}, \mathcal{H}) 
                    = 
                    \{ A \subseteq \mathbb{N} : 
                        (\exists \, k \in \mathbb{N}) 
                        &(\forall \, \text{finite sequence } \langle H_i \rangle_{i = 1}^n \text{ in } \mathcal{P}_f(\{k, k+1, \ldots\})) \\
                        &\hspace{-2.5em}(\forall \, m \in \mathbb{N} \text{ with } m > \max {\textstyle \bigcup_{i = 1}^n H_i})
                        (\exists \, G \in \mathcal{P}_f(\{m, m+1, \ldots\})) \\
                        &\hspace{-0.5em}(\forall \, i \in \{1, 2, \ldots, n\}) 
                          \; {\textstyle \sum_{s \in H_i \cup G} x_s \in A} \}.
                \end{split}
             \]
            In Example~\ref{example:product-filters}(b) we note some sufficient conditions on the sequence \( \langle x_n \rangle_{n = 1}^\infty \) that shows \( \mathsf{Syn}(\mathcal{H}, \mathcal{H}) \ne \mathsf{Syn} \).

            \item[(c)]
                We have
                \[
                    \mathsf{Syn}\left(\mathcal{H},\mathcal{C}\right) 
                    = 
                    \{ A \subseteq \mathbb{N} : 
                        \left(\forall \, k \in \mathbb{N}\right) 
                        \left(\exists \, \ell > k\right) 
                        \left(\forall^{\infty} \, t \in \mathbb{N}\right) 
                        \left(\exists \, H \in \mathcal{P}_f([k, \ell]) \right)
                            \; \textstyle \sum_{n \in H} x_n  + t \in A \}
                \]
                and
                \[
                    \mathsf{Thick}\left(\mathcal{H},\mathcal{C}\right) 
                    = 
                    \{ A \subseteq \mathbb{N} : 
                        \left(\exists \, k \in \mathbb{N}\right) 
                        \left(\forall \, \ell > k\right) 
                        \left(\exists^{\infty} \, t \in \mathbb{N}\right) 
                        \left(\forall \, H \in \mathcal{P}_f([k, \ell]) \right)
                            \; \textstyle \sum_{n \in H} x_n  + t \in A \}.
                \]
                
            \item[(d)]
                Finally, we have
                \[
                \begin{split}
                    \mathsf{Syn}(\mathcal{C}, \mathcal{H}) 
                    = 
                    \{ A \subseteq \mathbb{N} :
                        \left(\forall \, k \in \mathbb{N}\right)
                        \left(\exists \, \ell > k \right)
                        &(\exists \, m \in \mathbb{N}) \\
                        &\hspace{-10em}(\forall \, H \in \mathcal{P}_f(\{m, m + 1, \ldots\})
                        (\exists \, t \in \left[k, \ell\right])
                            \; \textstyle t + \sum_{n \in H} x_n \in A \}.
                \end{split}
                \]
                and                 
                \[
                \begin{split}
                    \mathsf{Thick}(\mathcal{C}, \mathcal{H}) 
                    = 
                    \{ A \subseteq \mathbb{N} :
                        \left(\exists \, k \in \mathbb{N}\right)
                        \left(\forall \, \ell > k \right)
                       &(\forall \, m \in \mathbb{N}) \\
                       &\hspace{-10em} (\exists \, H \in \mathcal{P}_f(\{m, m + 1, \ldots\})
                        (\forall \, t \in \left[k, \ell\right])
                            \; \textstyle t + \sum_{n \in H} x_n \in A \}.
                \end{split}
                \]

    \end{itemize}

\end{example}

A ``characterization problem'' was stated in \cite[Problem~3.8]{Christopherson:2022wr}  which asked 
to use the algebraic structure of \( \beta S \) to characterize \( (\mathcal{F}, \mathcal{G}) \)-syndetic and  \( (\mathcal{F}, \mathcal{G}) \)-thick for stacks \( \mathcal{F}, \mathcal{G} \).
The next result, in a sense, reduces this problem to studying certain families of products of a collection with a stack.
It generalizes \cite[Theorem~4.4]{Griffin:2024aa} and, as a corollary, we can obtain the special cases proved earlier in \cite[Lemma~3.9]{Christopherson:2022wr}:
\begin{theorem}
\label{theorem:relative-syndetic-thick}	
	Let \( S \) be a semigroup and let both \( \mathcal{F}, \mathcal{G} \) both be stacks.
	\begin{itemize}
		\item[(a)]
			\( \mathsf{Syn}(\mathcal{F}, \mathcal{G}) = \{ A \subseteq S : \text{for every maximal filter } \mathcal{H} \subseteq \mathcal{G}^* \text{ we have } A'(\mathcal{H}^*) \in \mathcal{F}^* \}  \).
			
		\item[(b)]
			\( \mathsf{Thick}(\mathcal{F}, \mathcal{G}) = \{ A \subseteq S : \text{there exists a maximal filter } \mathcal{H} \subseteq \mathcal{G}^* \text{ such that } A'(\mathcal{H}) \in \mathcal{F} \}  \).
	\end{itemize}
\end{theorem}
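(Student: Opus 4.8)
The plan is to reduce to the case where $\mathcal{F}$ and $\mathcal{G}$ are both proper (the improper cases being read off from the observations preceding Proposition~\ref{proposition:relative-syndetic-thick}), prove part (a) by establishing the two set inclusions directly, and then obtain (b) either by a verbatim parallel argument or formally from (a). The facts I will lean on are: for a stack $\mathcal{E}$, $C \in \mathcal{E}^* \iff S \setminus C \notin \mathcal{E}$ (Proposition~\ref{proposition:stack}(b)); $\mathcal{H} \subseteq \mathcal{G}^* \iff \mathcal{G} \subseteq \mathcal{H}^*$ (Proposition~\ref{proposition:mesh-operator}); for a filter $\mathcal{H}$ the mesh $\mathcal{H}^*$ is a grill (Proposition~\ref{proposition:filter-grill}(a)); and the elementary identity $S \setminus h^{-1}A = h^{-1}(S \setminus A)$.

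For the inclusion $\subseteq$ in (a), let $A$ be $(\mathcal{F},\mathcal{G})$-syndetic, let $\mathcal{H} \subseteq \mathcal{G}^*$ be a maximal filter, and fix $B \in \mathcal{F}$; it suffices to produce some $h \in B$ with $h^{-1}A \in \mathcal{H}^*$. By hypothesis pick $H \in \mathcal{P}_f(B)$ with $\bigcup_{h \in H} h^{-1}A \in \mathcal{G}$, and note $\mathcal{G} \subseteq \mathcal{H}^*$, so this union lies in $\mathcal{H}^*$. Since $\mathcal{H}^*$ is a grill, iterating the Ramsey property over the finitely many $h \in H$ produces some $h \in H \subseteq B$ with $h^{-1}A \in \mathcal{H}^*$, that is, $h \in A'(\mathcal{H}^*) \cap B$. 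As $B \in \mathcal{F}$ was arbitrary, $A'(\mathcal{H}^*) \in \mathcal{F}^*$. (This direction uses only that $\mathcal{H}$ is a filter contained in $\mathcal{G}^*$, not its maximality.)

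For the reverse inclusion I argue contrapositively. Suppose $A \notin \mathsf{Syn}(\mathcal{F},\mathcal{G})$ and fix a witness $B \in \mathcal{F}$, so $\bigcup_{h \in H} h^{-1}A \notin \mathcal{G}$ for every $H \in \mathcal{P}_f(B)$; equivalently $\bigcap_{h \in H} h^{-1}(S \setminus A) = S \setminus \bigcup_{h\in H} h^{-1}A \in \mathcal{G}^*$ for every such $H$. Since $B$ is nonempty and, by properness of $\mathcal{G}^*$, none of these intersections is empty, the family $\{h^{-1}(S \setminus A) : h \in B\}$ has the finite intersection property; by upward closure inside the stack $\mathcal{G}^*$ it generates a proper filter contained in $\mathcal{G}^*$. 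By Zorn's lemma — the union of a chain of proper filters inside $\mathcal{G}^*$ is again one — extend it to a maximal filter $\mathcal{H} \subseteq \mathcal{G}^*$. Then $h^{-1}(S \setminus A) \in \mathcal{H}$ for every $h \in B$, hence $h^{-1}A \notin \mathcal{H}^*$ for every $h \in B$, that is, $A'(\mathcal{H}^*) \cap B = \emptyset$; therefore $A'(\mathcal{H}^*) \notin \mathcal{F}^*$ and $A$ is not in the right-hand set of (a).

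Part (b) runs along the same two-step pattern: if $A$ is $(\mathcal{F},\mathcal{G})$-thick with witness $B$, then since $\bigcap_{h\in H} h^{-1}A \in \mathcal{G}^*$ for every $H \in \mathcal{P}_f(B)$ the family $\{h^{-1}A : h \in B\}$ generates a proper filter inside $\mathcal{G}^*$, which extends to a maximal filter $\mathcal{H} \subseteq \mathcal{G}^*$ with $B \subseteq A'(\mathcal{H})$, whence $A'(\mathcal{H}) \in \mathcal{F}$; conversely, if $\mathcal{H} \subseteq \mathcal{G}^*$ is a maximal filter with $A'(\mathcal{H}) \in \mathcal{F}$, then $B := A'(\mathcal{H}) \in \mathcal{F}$ witnesses thickness, because for $H \in \mathcal{P}_f(B)$ each $h^{-1}A \in \mathcal{H}$ and so $\bigcap_{h\in H} h^{-1}A \in \mathcal{H} \subseteq \mathcal{G}^*$. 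Alternatively, (b) is immediate from (a) and Proposition~\ref{proposition:relative-syndetic-thick}(b): the right-hand set of (a) equals $\bigcap_{\mathcal{H}}(\mathcal{F}^* \cdot \mathcal{H}^*) = \bigcap_{\mathcal{H}}(\mathcal{F}\cdot\mathcal{H})^* = \bigl(\bigcup_{\mathcal{H}} \mathcal{F}\cdot\mathcal{H}\bigr)^*$ (intersections and unions over maximal filters $\mathcal{H}\subseteq\mathcal{G}^*$; using Definition~\ref{definition:product-of-collections}, Corollary~\ref{corollary:derived-set}(d, ii), and Corollary~\ref{corollary:mesh-operator}(f)), and since a union of stacks is a stack, applying one more mesh returns $\bigcup_{\mathcal{H}} \mathcal{F}\cdot\mathcal{H}$, which is the right-hand set of (b). The sole genuinely non-formal step is the Zorn's-lemma extension to a maximal filter still lying inside $\mathcal{G}^*$ — possible precisely because the pertinent finite intersections land in $\mathcal{G}^*$, which is exactly where the failure (syndetic case) or presence (thick case) of the relative size condition is consumed; everything else is the grill Ramsey property and routine mesh bookkeeping, and I expect the improper-case checking to be the most error-prone part.
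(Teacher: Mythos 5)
Your proof is correct, and the mechanism is essentially the same as the paper's — filterbase construction inside \( \mathcal{G}^* \), Zorn's lemma, and routine mesh bookkeeping — but you decompose the work differently. The paper proves statement (b) directly (build the filter \( \{ \bigcap_{h\in H} h^{-1}A : H \in \mathcal{P}_f(B)\}^{**} \subseteq \mathcal{G}^* \), Zorn up to a maximal one, and observe \( B \subseteq A'(\mathcal{H}) \)) and then obtains (a) for free via the duality chain \( \mathsf{Syn}^* = \mathsf{Thick} \) (Proposition~\ref{proposition:relative-syndetic-thick}(b)), \( A \in \mathcal{E}^* \iff S\setminus A \notin \mathcal{E} \) (Proposition~\ref{proposition:stack}(b)), and \( (S\setminus A)'(\mathcal{H}) = S \setminus A'(\mathcal{H}^*) \) (Proposition~\ref{proposition:derived-set}(b, i)). You instead prove (a) directly: your \( \subseteq \) direction replaces the paper's ``finite intersections stay in the filter \( \mathcal{H} \)'' with the dual observation ``the grill \( \mathcal{H}^* \) has the Ramsey property, so a finite union in \( \mathcal{H}^* \) has a member in \( \mathcal{H}^* \)'', and your contrapositive \( \supseteq \) direction is precisely the paper's proof of (b) applied to \( S \setminus A \). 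Your closing formal derivation of (b) from (a) via \( \bigcap_{\mathcal{H}} \mathcal{F}^*\cdot\mathcal{H}^* = \bigl(\bigcup_{\mathcal{H}}\mathcal{F}\cdot\mathcal{H}\bigr)^* \) and the fact that a union of stacks is a stack is also correct and arguably cleaner than the paper's terse ``(a) and (b) are equivalent by Propositions 2.6, 2.8, 3.3''; either derivation is fine. Both routes buy you the same thing: one genuine Zorn's-lemma step and one mesh-duality step, just scheduled in the opposite order. The improper-case reduction you wave at does check out (the only maximal filter in \( \mathcal{P}(S) = \emptyset^* \) is the improper filter \( \mathcal{P}(S) \), and \( A'(\mathcal{P}(S)^*) = A'(\emptyset) = \emptyset \)), though you were right to flag it as the place where it is easiest to slip.
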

\begin{proof} 
	First, if either \( \mathcal{F} \) or \( \mathcal{G} \) are improper, then by the above observation on \( \mathsf{Syn}(\mathcal{F}, \mathcal{G}) \) in these improper cases, statement (a) is satisfied. 
	(Note the only maximal filter in \( \mathcal{P}(S) = \emptyset^* \) is the improper filter \( \mathcal{P}(S) \) itself.)
	So, we may assume both \( \mathcal{F} \) and \( \mathcal{G} \) are proper.
	Under the assumption \( \mathcal{G} \) is proper, we have \( \mathcal{G}^* \) is proper too (Proposition~\ref{proposition:stack}(a)) and all (maximal) filters contained in \( \mathcal{G}^* \) are necessarily proper. 
	
	Second, observe by Propositions~\ref{proposition:relative-syndetic-thick} and \ref{proposition:derived-set}(b, i) it follows that statements (a) and (b) are equivalent.
	Hence it suffices to only show one of them, say statement (b).
	
	\smallskip
		
	To this end, we first show if \( A \in \mathsf{Thick}(\mathcal{F}, \mathcal{G}) \), then there exists a filter \( \mathcal{H} \subseteq \mathcal{G}^* \) with \( A'(\mathcal{H}) \in \mathcal{F} \).
	
	Let \( A \in \mathsf{Thick}(\mathcal{F}, \mathcal{G}) \) and pick \( B \in \mathcal{F} \) as guaranteed for \( A \).
	Then \( \{ \bigcap_{h \in H} h^{-1}A : H \in \mathcal{P}_f(B) \} \) is a filterbase contained in \( \mathcal{G}^* \).
	(Since \( \bigl(\bigcap_{h \in H_1} h^{-1}A\bigr) \cap \bigl(\bigcap_{h \in H_2} h^{-1}A\bigr) = \bigcap_{h \in H_1 \cup H_2} h^{-1}A \).)
	Therefore \( \mathcal{H} := \{ \bigcap_{h \in H} h^{-1}A : H \in \mathcal{P}_f(B) \}^{**} \) (see Proposition~\ref{proposition:stack}(c)) is a filter and because \( \mathcal{G}^* \) is a stack it follows that \( \mathcal{H} \subseteq \mathcal{G}^* \).
	Now \( \{ h^{-1}A : h \in B \} \subseteq \mathcal{H} \), that is, \( B \subseteq A'(\mathcal{H}) \).
	Since \( \mathcal{F} \) is a stack we have \( A'(\mathcal{H}) \in \mathcal{F} \).
	
	We now show if \( A \subseteq S \) such that there exists a filter \( \mathcal{H} \subseteq \mathcal{G}^* \) with \( A'(\mathcal{H}) \in \mathcal{F} \), then there exists a \emph{maximal} proper filter \( \mathcal{H} \subseteq \mathcal{G}^* \) with \( A'(\mathcal{H}) \in \mathcal{F} \).

	This follows from a direct application of Zorn's lemma.
	By assumption, the set \[ \Phi := \{ \mathcal{H} : \mathcal{H} \subseteq \mathcal{G}^* \text{ is a filter and } A'(\mathcal{H}) \in \mathcal{F} \} \] is nonempty and partially ordered by inclusion.
	Given a chain in \( \Phi \), the union of the chain is a filter contained in \( \mathcal{G}^* \) and, by Propositions~\ref{proposition:derived-set}(c, ii) and (c, iii) we have the union of the chain is also in \( \Phi \) (where we use the fact \( \mathcal{F} \) is a stack).
	Hence every chain in the collection has an upper bound and we can apply Zorn's lemma to conclude there is a maximal proper filter.
	
	Finally, we show if \( A \subseteq S \) such that there exists a (maximal proper) filter \( \mathcal{H} \subseteq \mathcal{G}^* \) with \( A'(\mathcal{H}) \in \mathcal{F} \), then \( A \in \mathsf{Thick}(\mathcal{F}, \mathcal{G}) \).
	Putting \( B := A'(\mathcal{H}) \) for all \( H \in \mathcal{P}_f(B) \) we have, since \( \mathcal{H} \) is a filter, \( \bigcap_{h \in H} h^{-1}A \in \mathcal{H} \subseteq \mathcal{G}^* \).	
\end{proof}

Since \( A'(\mathcal{H}) \in \mathcal{F} \iff A \in \mathcal{F} \cdot \mathcal{H} \) we have, by Theorem~\ref{theorem:relative-syndetic-thick}, that for stacks \( \mathcal{F} \) and \( \mathcal{G} \)
\[
    \mathsf{Syn}(\mathcal{F}, \mathcal{G}) = \bigcap_{\substack{\mathcal{H} \subseteq \mathcal{G}^* \\ \mathcal{H} \text{ is a filter}}} \mathcal{F}^* \cdot \mathcal{H}^*     
    \quad \text{ and } \quad
    \mathsf{Thick}(\mathcal{F}, \mathcal{G}) = \bigcup_{\substack{\mathcal{H} \subseteq \mathcal{G}^* \\ \mathcal{H} \text{ is a filter}}} \mathcal{F} \cdot \mathcal{H}.
\]
Hence one interpretation of Theorem~\ref{theorem:relative-syndetic-thick} is that relative notions of syndetic and thick naturally arise when considering the product of a stack with either a grill or filter in the second component of the product.
Note that even when \( \mathcal{G} \) is a proper stack, maximal filters contained in \( \mathcal{G}^* \) are \emph{not necessarily} ultrafilters.
(For instance, consider \( \{2\mathbb{N}, 2\mathbb{N}-1\}^{**} \) the union of two filters, one generated by the even and the other by odd positive integers.)
However, if \( \mathcal{G} \) is a filter, then maximal filters contained in \( \mathcal{G}^* \) are precisely the ultrafilters in \( \overline{\mathcal{G}} \).
(Of course, if \( \mathcal{G} = \mathcal{P}(S) \) is improper, then \( \overline{\mathcal{G}} = \emptyset \).)
Hence restricting the second component \( \mathcal{G} \) to be a filter we obtain the following characterizations of relative syndetic and thick in terms of derived sets along ultrafilters:
\begin{corollary}
\label{corollary:relative-syndetic-thick}
	Let \( S \) be a semigroup, let \( \mathcal{F} \) be a stack, and let \( \mathcal{G} \) be a filter on \( S \).
	\begin{itemize}
		\item[(a)]
		\( \mathsf{Syn}(\mathcal{F}, \mathcal{G}) = \{A \subseteq S : \text{for all } q \in \overline{\mathcal{G}} \text{ we have } A'(q) \in \mathcal{F}^* \}\) and \( \mathsf{Syn}(\mathcal{F}, \mathcal{G}^*) = \mathcal{F}^* \cdot \mathcal{G}^* \).
	
		\item[(b)]
		\( \mathsf{Thick}(\mathcal{F}, \mathcal{G}) = \{A \subseteq S : \text{there exists } q \in \overline{\mathcal{G}} \text{ such that } A'(q) \in \mathcal{F} \}\) and \( \mathsf{Thick}(\mathcal{F}, \mathcal{G}^*) = \mathcal{F} \cdot \mathcal{G} \).	
	\end{itemize}
\end{corollary}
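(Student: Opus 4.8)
The plan is to read the four identities off Theorem~\ref{theorem:relative-syndetic-thick} once the ``maximal filter'' quantifiers there are made explicit in the presence of a filter in the second coordinate. First I would dispose of the degenerate cases in which $\mathcal{F}$ or $\mathcal{G}$ is improper: in those cases all four equalities are checked directly, comparing the explicit values of $\mathsf{Syn}(\cdot,\cdot)$ recorded just before Proposition~\ref{proposition:relative-syndetic-thick} (together with $\mathsf{Thick}(\mathcal{F},\mathcal{G})=\mathsf{Syn}(\mathcal{F},\mathcal{G})^*$) against the explicit descriptions of $\mathcal{F}\cdot\emptyset$ and $\mathcal{F}\cdot\mathcal{P}(S)$ recorded after Definition~\ref{definition:product-of-collections}. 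So assume henceforth that $\mathcal{F}$ and $\mathcal{G}$ are proper; then $\mathcal{G}^*$ is a proper grill by Propositions~\ref{proposition:stack}(a) and \ref{proposition:filter-grill}(a), so every filter contained in $\mathcal{G}^*$ is proper.

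For the first identity in (a) and the first in (b), apply Theorem~\ref{theorem:relative-syndetic-thick}(a) and (b) with the stacks $\mathcal{F}$ and $\mathcal{G}$. The essential input, and the one step I flag as the real obstacle, is the identification (asserted in the paragraph preceding the corollary) of the maximal filters $\mathcal{H}\subseteq\mathcal{G}^*$ with the ultrafilters $q\in\overline{\mathcal{G}}$. For the forward inclusion: if $q\in\overline{\mathcal{G}}$ then $\mathcal{G}\subseteq q$, hence $q=q^*\subseteq\mathcal{G}^*$ by Proposition~\ref{proposition:filter-grill}(c) and Corollary~\ref{corollary:mesh-operator}(b); since ultrafilters are maximal proper filters (Proposition~\ref{proposition:filter-grill}(e)) and $\mathcal{G}^*$ is proper, $q$ is maximal among filters inside $\mathcal{G}^*$. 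For the reverse inclusion: given a maximal filter $\mathcal{H}\subseteq\mathcal{G}^*$, one checks that $(\mathcal{H}*\mathcal{G})^{**}$ is again a filter contained in $\mathcal{G}^*$ (if $H\in\mathcal{H}$ and $G,G'\in\mathcal{G}$ then $(H\cap G)\cap G'=H\cap(G\cap G')\ne\emptyset$, because $G\cap G'\in\mathcal{G}$ and $H\in\mathcal{G}^*$, and $\mathcal{G}^*$ is a stack), so maximality forces $\mathcal{G}\subseteq\mathcal{H}$; then either a one-element splitting argument (if $A,S\setminus A\notin\mathcal{H}$, then $\{H\cap A:H\in\mathcal{H}\}^{**}$ is a strictly larger filter still inside $\mathcal{G}^*$, a contradiction) or an ultrafilter extension of $\mathcal{H}$ (which must again lie in $\mathcal{G}^*$ and hence equal $\mathcal{H}$) shows $\mathcal{H}$ is an ultrafilter, and $\mathcal{G}\subseteq\mathcal{H}$ then places it in $\overline{\mathcal{G}}$.

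Granting this identification, for $q\in\overline{\mathcal{G}}$ we have $q^*=q$ (Proposition~\ref{proposition:filter-grill}(c)), so $A'(\mathcal{H}^*)=A'(q)=A'(\mathcal{H})$; substituting into the two clauses of Theorem~\ref{theorem:relative-syndetic-thick} yields precisely $\mathsf{Syn}(\mathcal{F},\mathcal{G})=\{A\subseteq S:A'(q)\in\mathcal{F}^*\text{ for all }q\in\overline{\mathcal{G}}\}$ and $\mathsf{Thick}(\mathcal{F},\mathcal{G})=\{A\subseteq S:A'(q)\in\mathcal{F}\text{ for some }q\in\overline{\mathcal{G}}\}$, which are the first halves of (a) and (b).

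For the second identity in each of (a) and (b), instead feed $\mathcal{G}^*$ into the second coordinate of Theorem~\ref{theorem:relative-syndetic-thick}, which is legitimate since $\mathcal{G}^*$ is a stack. Because $\mathcal{G}$ is a filter, $(\mathcal{G}^*)^*=\mathcal{G}^{**}=\mathcal{G}$ by Proposition~\ref{proposition:stack}(d), and the unique maximal filter contained in the filter $\mathcal{G}$ is $\mathcal{G}$ itself. Hence Theorem~\ref{theorem:relative-syndetic-thick}(b) collapses to $\mathsf{Thick}(\mathcal{F},\mathcal{G}^*)=\{A\subseteq S:A'(\mathcal{G})\in\mathcal{F}\}=\mathcal{F}\cdot\mathcal{G}$ by Definition~\ref{definition:product-of-collections}, and Theorem~\ref{theorem:relative-syndetic-thick}(a) collapses to $\mathsf{Syn}(\mathcal{F},\mathcal{G}^*)=\{A\subseteq S:A'(\mathcal{G}^*)\in\mathcal{F}^*\}=\mathcal{F}^*\cdot\mathcal{G}^*$, again by Definition~\ref{definition:product-of-collections}. (As a cross-check, this last identity also follows from the previous one: $\mathsf{Syn}(\mathcal{F},\mathcal{G}^*)$ is a stack by Proposition~\ref{proposition:relative-syndetic-thick}(a), so $\mathsf{Syn}(\mathcal{F},\mathcal{G}^*)=\mathsf{Thick}(\mathcal{F},\mathcal{G}^*)^*=(\mathcal{F}\cdot\mathcal{G})^*=\mathcal{F}^*\cdot\mathcal{G}^*$ using Proposition~\ref{proposition:relative-syndetic-thick}(b) and Corollary~\ref{corollary:derived-set}(d)(ii).) Apart from the maximal-filter/ultrafilter identification, every step is a substitution into Theorem~\ref{theorem:relative-syndetic-thick} followed by an unwinding of Definition~\ref{definition:product-of-collections}, so no further obstacle is expected.
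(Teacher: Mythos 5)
Your proof is correct and follows the same route as the paper's: apply Theorem~\ref{theorem:relative-syndetic-thick} with $\mathcal{G}$ (respectively $\mathcal{G}^*$) in the second coordinate and identify the relevant maximal filters. The only difference is that the paper states without proof the fact that maximal filters contained in $\mathcal{G}^*$ are precisely the ultrafilters in $\overline{\mathcal{G}}$ (this is asserted in the paragraph preceding the corollary) and then uses the duality of Proposition~\ref{proposition:relative-syndetic-thick}(b) together with Proposition~\ref{proposition:derived-set}(b,\,i) to reduce to proving only part (b), whereas you supply a complete proof of the maximal-filter/ultrafilter identification and argue both parts directly by substitution into the theorem.
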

\begin{proof}
	Similar to the observation in the second paragraph of the proof of Theorem~\ref{theorem:relative-syndetic-thick}, statements (a) and (b) are equivalent.
	Hence it only suffices to show one of them, say statement (b).
	
	\smallskip
	The first assertion follows directly from Theorem~\ref{theorem:relative-syndetic-thick} since maximal filters contained in \( \mathcal{G}^* \) are precisely ultrafilters in \( \overline{\mathcal{G}} \).
	For the second assertion we have, by Proposition~\ref{proposition:stack}(d), \( \mathcal{G}^{**} = \mathcal{G} \) and the largest filter contained in \( \mathcal{G} \) is simply \( \mathcal{G} \) itself.
\end{proof}

We note (but do not stop to prove here) that if \( \mathcal{F} \) is also a proper filter or proper grill and \( \mathcal{G} \) is assumed proper, then \cite[Lemma~3.9]{Christopherson:2022wr} follows from Corollary~\ref{corollary:relative-syndetic-thick}.
Given the context of Corollary~\ref{corollary:relative-syndetic-thick} we can state another new aspect of the ``characterization problem'' as determining minimal and maximal ultrafilters with respect to certain preorders, by which we mean a transitive and reflexive relation:
\begin{problem}
\label{problem:characterization-problem}
	Let \( S \) be a semigroup and let \( \mathcal{F} \) be a proper stack.
	Define the relation \( \lesssim_{(\mathcal{F})} \) on \( \beta S \) by \( p \lesssim_{(\mathcal{F})} q \) if and only if \( \mathcal{F} \cdot p \subseteq \mathcal{F} \cdot q \), that is, for all \( A \subseteq S \) we have \( A'(\mathcal{F}) \in p \) implies \( A'(\mathcal{F}) \in q  \).
	Corollary~\ref{corollary:derived-set}(d) guarantees \( \lesssim_{(\mathcal{F})} \) is a preorder on \( \beta S \).
	(Similarly, we can define a preorder where the second component in the product is fixed.)
	Determine, with respect to the preorder \( \lesssim_{(\mathcal{F})} \),  minimal and maximal ultrafilters both in \( \beta S \) and, more generally in \( \overline{\mathcal{G}} \) for a proper filter \( \mathcal{G} \).

\end{problem}

    Luperi Baglini \cite[Sec.~5]{LuperiBaglini2014} asks a similar question on determining minimal ultrafilters with respect to a related preorder that is defined in terms a notion of ``finite embeddability'' of sets and ultrafilters due to Blass and Di Nasso \cite{Blass2016}.
    (Following \cite[Theorems~4 and 11]{Blass2016}, one can define, for \( A, B \subseteq S \), that \( A \) is \define{finitely embeddable} in \( B \) if there exists \( q \in \beta S \) with \( A \subseteq B'(q) \), while for \( p, q \in \beta S \) we say \( p \) is \define{finitely embeddable} in \( q \) if \( q \in c\ell(p \cdot \beta S) \).)
    Šobot \cite[Sec.~3]{v-Sobot:2022aa} shows the existence of certain minimal sets and ultrafilters with respect to finite embeddability in positive integers under multiplication, while Luperi Baglini \cite[Theorem~4.13]{Luperi-Baglini:2016aa} proves maximal ultrafilters with respect to finite embeddability in an arbitrary semigroup \(S \) are precisely elements in the closure of the smallest ideal \( c\ell \bigl( K(\beta S) \bigr) \).
    Analogously, we show, with an additional assumption on \( \mathcal{F} \), that we can characterize \( \lesssim_{(\mathcal{F})} \) maximal ultrafilters in a certain closed subset, but we don't know the situation for weaker assumptions on \( \mathcal{F} \) nor (even any partial) results concerning minimal ultrafilters.

\begin{theorem}
\label{theorem:maximal-elements}
    Let \( S \) be a semigroup, let \( \mathcal{F} \) be a proper filter on \( S \) with \( \mathcal{F} \subseteq \mathsf{Syn}(\mathcal{F}^*, \mathcal{F}) \), and let \( q \in \beta S \) with \( \overline{\mathcal{F}} \cdot q \subseteq \overline{\mathcal{F}} \).
    The following are equivalent.
    \begin{itemize}
        \item[(a)] \( q \) is \( \lesssim_{(\mathcal{F})} \) maximal among ultrafilters in \( \overline{\mathcal{F}} \).

        \item[(b)] \( \overline{\mathcal{F}} \cdot q \) is a minimal left ideal of \( \overline{\mathcal{F}} \).
    \end{itemize}
\end{theorem}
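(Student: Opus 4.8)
Write $T := \overline{\mathcal{F}}$. The plan is to recognise $T$ as a compact right topological subsemigroup of $\beta S$, to translate the preorder $\lesssim_{(\mathcal{F})}$ into reverse inclusion of left ideals of $T$, and then to read the equivalence off from the structure theory of compact right topological semigroups. First, by Corollary~\ref{corollary:relative-syndetic-thick}(a) applied with the stack $\mathcal{F}^{*}$ in the first slot and the filter $\mathcal{F}$ in the second, and using $(\mathcal{F}^{*})^{*} = \mathcal{F}^{**} = \mathcal{F}$ (Proposition~\ref{proposition:stack}(d)), the hypothesis $\mathcal{F}\subseteq\mathsf{Syn}(\mathcal{F}^{*},\mathcal{F})$ says precisely that $\mathcal{F}\subseteq\mathcal{F}\cdot r$ for all $r\in\overline{\mathcal{F}}$; so for $u,r\in\overline{\mathcal{F}}$, isotonicity of the product in its first argument (Corollary~\ref{corollary:derived-set}(b)) gives $\mathcal{F}\subseteq\mathcal{F}\cdot r\subseteq u\cdot r$, whence $u\cdot r\in\overline{\mathcal{F}}$. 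Thus $T$ is a closed (hence compact right topological) subsemigroup of $\beta S$, so it has minimal left ideals and, by the structure theory, every left ideal of $T$ contains one; and the remaining hypothesis $\overline{\mathcal{F}}\cdot q\subseteq\overline{\mathcal{F}}$ makes $T\cdot q$ a nonempty left ideal of $T$ which is closed, being the image of the compact $T$ under the continuous map $p\mapsto p\cdot q$.

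The key step is the identity $\overline{\mathcal{F}\cdot p} = T\cdot p$ for every ultrafilter $p\in\beta S$. The inclusion $\supseteq$ is immediate, since $u\supseteq\mathcal{F}$ gives $u\cdot p\supseteq\mathcal{F}\cdot p$. For $\subseteq$, let $r\supseteq\mathcal{F}\cdot p$ and consider the family $\mathcal{F}\cup\{\,A'(p):A\in r\,\}$. A finite intersection drawn from it reduces to one of the form $B\cap A'(p)$ with $B\in\mathcal{F}$ and $A\in r$ (using that $\mathcal{F}$ and $r$ are filters and, for the ultrafilter $p$, that $A_{1}'(p)\cap A_{2}'(p) = (A_{1}\cap A_{2})'(p)$, Proposition~\ref{proposition:derived-set}(a)(iii)); and it is nonempty, for $B\cap A'(p)=\emptyset$ would give $B\subseteq S\setminus A'(p) = (S\setminus A)'(p)$ (Proposition~\ref{proposition:derived-set}(b)(iv)), hence $(S\setminus A)'(p)\in\mathcal{F}$ (a stack), i.e.\ $S\setminus A\in\mathcal{F}\cdot p\subseteq r$, contradicting $A\in r$. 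So the family has the finite intersection property; extending it to an ultrafilter $u$ yields $\mathcal{F}\subseteq u$, so $u\in T$, and $A'(p)\in u$ for every $A\in r$ gives $r\subseteq u\cdot p$, hence $r = u\cdot p\in T\cdot p$ (both are ultrafilters, Corollary~\ref{corollary:derived-set}(d)). Since $\mathcal{F}\cdot p$ and $\mathcal{F}\cdot p'$ are proper filters (Corollary~\ref{corollary:derived-set}(d)) and $\mathcal{G}\mapsto\overline{\mathcal{G}}$ is an inclusion-reversing bijection between proper filters and nonempty closed subsets of $\beta S$ (\cite[Theorem~3.20]{Hindman:2012tq}), this gives, for ultrafilters $p,p'\in\beta S$,
\[
	p\lesssim_{(\mathcal{F})}p' \iff \mathcal{F}\cdot p\subseteq\mathcal{F}\cdot p' \iff \overline{\mathcal{F}\cdot p'}\subseteq\overline{\mathcal{F}\cdot p} \iff T\cdot p'\subseteq T\cdot p .
\]

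To conclude, note that (a) --- there is no ultrafilter $p\in T$ with $q\lesssim_{(\mathcal{F})}p$ but not $p\lesssim_{(\mathcal{F})}q$ --- becomes, via the last display: for every ultrafilter $p\in T$, $T\cdot p\subseteq T\cdot q$ implies $T\cdot p = T\cdot q$. If this holds, the left ideal $T\cdot q$ contains a minimal left ideal $L$; picking $p\in L$ we get $T\cdot p\subseteq L\subseteq T\cdot q$ with $p$ an ultrafilter in $T$, so $T\cdot p = T\cdot q$, and since $T\cdot p$ is a left ideal inside the minimal $L$ this forces $L = T\cdot p = T\cdot q$, so $T\cdot q$ is minimal --- (b). Conversely, if $T\cdot q$ is a minimal left ideal and $p\in T$ is an ultrafilter with $T\cdot p\subseteq T\cdot q$, then minimality forces $T\cdot p = T\cdot q$, which is exactly the reformulated (a). The main obstacle is the identity $\overline{\mathcal{F}\cdot p} = T\cdot p$: its finite-intersection proof is routine, but it is what connects the combinatorial preorder to the algebra, and thereafter the only point to watch is that the connection reverses inclusions, so that $\lesssim_{(\mathcal{F})}$-maximality of $q$ corresponds to minimality of the left ideal $T\cdot q$.
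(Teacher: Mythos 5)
Your proof is correct and takes essentially the same approach as the paper's: translate the preorder into reverse inclusion of left ideals $\overline{\mathcal{F}}\cdot p$ via the identity $\overline{\mathcal{F}\cdot p}=\overline{\mathcal{F}}\cdot p$, then read off the equivalence from minimality. The only real difference is that the paper cites this identity (to \cite[Theorem~4.3]{Griffin:2024aa}) and the subsemigroup hypothesis (to \cite[Corollary~3.12(a)]{Christopherson:2022wr}) while you verify both from the machinery already built; the paper's (a)\,$\Rightarrow$\,(b) is also marginally more direct, showing \emph{every} left ideal inside $\overline{\mathcal{F}}\cdot q$ equals it rather than routing through a minimal one.
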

\begin{proof}
    By \cite[Corollary~3.12(a)]{Christopherson:2022wr} the condition \( \mathcal{F} \subseteq \mathsf{Syn}(\mathcal{F}^*, \mathcal{F}) \) is equivalent to stating \( \overline{\mathcal{F}} \) is a closed subsemigroup of \( \beta S \).
    Now by Corollary~\ref{corollary:derived-set}(d, iii) we have \( \mathcal{F} \cdot p \) is a filter for every \( p \in \beta S \), and it follows (or see \cite[Theorem~4.3]{Griffin:2024aa}) that \( \overline{\mathcal{F} \cdot p} = \overline{\mathcal{F}} \cdot p \).

    \smallskip

    \textbf{(a) \( \Rightarrow \) (b):}
    It follows that \( \overline{\mathcal{F}} \cdot q \) is a left ideal of \( \overline{\mathcal{F}} \). 
    Let \( L \) be a left ideal of \( \overline{\mathcal{F}} \) with \( L \subseteq \overline{\mathcal{F}} \cdot q \).
    Pick \( p \in L \) and note \( \overline{\mathcal{F} \cdot p} = \overline{\mathcal{F}} \cdot p \subseteq L \subseteq \overline{\mathcal{F}} \cdot q = \overline{\mathcal{F} \cdot q} \) implies \( \mathcal{F} \cdot q \subseteq \mathcal{F} \cdot p \).
    By maximality of \( q \) we have \( \mathcal{F} \cdot q = \mathcal{F} \cdot p \) and so \( \overline{\mathcal{F}} \cdot p = L = \overline{\mathcal{F}} \cdot q \).

    \smallskip

    \textbf{(b) \( \Rightarrow \) (a):}
    Let \( p \in \overline{\mathcal{F}} \) with  \( q \lesssim_{(\mathcal{F})} p \), that is, \( \mathcal{F} \cdot q \subseteq \mathcal{F} \cdot p \). 
    Then \( \overline{\mathcal{F}} \cdot p \subseteq \overline{\mathcal{F}} \cdot q \) and, by minimality, we have \( \overline{\mathcal{F}} \cdot p = \overline{\mathcal{F}} \cdot q \).
    Hence \( \mathcal{F} \cdot q = \mathcal{F} \cdot p \) and so \( q \equiv_{(\mathcal{F})} p \).
\end{proof}

Since the condition \( \mathcal{F} \subseteq \mathsf{Syn}(\mathcal{F}^*, \mathcal{F}) \) (provided \( \mathcal{F} \) is a proper filter) is equivalent to asserting \( \overline{\mathcal{F}} \) is a closed subsemigroup of \( \beta S \), we follow \cite[Section~2]{Di-Nasso:2018aa} in giving such filters a special name:
\begin{definition}
\label{definition:product-filters}  
    A filter \( \mathcal{F} \) is a \define{product filter} (or \define{additive filter}, if \( S \) is commutative) if and only if \( \mathcal{F} \subseteq \mathsf{Syn}(\mathcal{F}^*, \mathcal{F}) \).
\end{definition}

Analogous to Corollary~\ref{corollary:derived-set}(e, ii) and (e, iii), we have the following well known result that the ``join'' of two product filters is a product filter:
\begin{proposition}
\label{proposition:product-filters}
    If \( \mathcal{F}_1, \mathcal{F}_2 \) are both proper product filters with \( \mathcal{F}_1 \subseteq \mathcal{F}_2^* \), then \( \mathcal{F}_1 \sqcap \mathcal{F}_2 \) is a product filter.
\end{proposition}
\begin{proof}
    First, we note that \( \mathcal{F}_1 \sqcap \mathcal{F}_2 \) is the smallest filter that contains both \( \mathcal{F}_1 \) and \( \mathcal{F}_2 \).
    (Proposition~\ref{proposition:grill} shows \( \mathcal{F}_1 \sqcap \mathcal{F}_2 \) is a stack, while closure under finite intersections follows from definition, and the assumption of properness and \( \mathcal{F}_1 \subseteq \mathcal{F}_2^* \) we have \(  \mathcal{F}_1 \sqcap \mathcal{F}_2 \) is proper.
    Finally if \( \mathcal{F} \) is a filter with \( \mathcal{F}_1 \cup \mathcal{F}_2 \subseteq \mathcal{F} \), then it follows that \( \mathcal{F}_1 \sqcap \mathcal{F}_2 \subseteq \mathcal{F} \).)

    Therefore we have \( \emptyset \ne \overline{\mathcal{F}_1 \sqcap \mathcal{F}_2} = \overline{\mathcal{F}_1} \cap \overline{\mathcal{F}_2} \).
    Since nonempty intersections of closed subsemigroups of \( \beta S \) is a closed subsemigroup of \( \beta S \), we have \( \overline{\mathcal{F}_1 \sqcap \mathcal{F}_2} \) is a closed subsemigroup of \( \beta S \), that is, \( \mathcal{F}_1 \sqcap \mathcal{F}_2 \) is a product filter.
\end{proof}

\begin{example}
\label{example:product-filters}
    \mbox{}
    \begin{itemize}
        \item[(a)] 
            As already known, all proper idempotent filters are product filters.
            (By Corollary~\ref{corollary:relative-syndetic-thick} we have \( \mathsf{Syn}(\mathcal{F}^*, \mathcal{F}) = \bigcap_{q \in \overline{\mathcal{F}}} \mathcal{F} \cdot q \) and Corollary~\ref{corollary:derived-set}(c) implies \( \mathcal{F} \cdot \mathcal{F} \subseteq \mathcal{F} \cdot q \) for each \( q \in \overline{\mathcal{F}} \).)
            In particular in \( (\mathbb{N}, +) \) all of the filters in Example~\ref{example:filters} are idempotent (Example~\ref{example:idempotent-filters}) and hence are product filters.
            As is also known, there are product filters that are not idempotent filters (for instance, see \cite[Example~3.9]{Di-Nasso:2018aa}).

        \item[(b)]
            If we pick a sequence \( \langle x_n \rangle_{n = 1}^\infty \) in \( \mathbb{N} \) such that for each \( n \in \mathbb{N} \) we have \( x_{n + 1} > \sum_{t = 1}^n x_t \) and \( \{  x_{n + 1} - \sum_{t = 1}^n x_t : n \in \mathbb{N} \} \) is unbounded, then by a result of Adams, Hindman, and Strauss \cite[Corollary~4.2]{Adams:2008aa} we have each \( \mathrm{FS}(\langle x_n \rangle_{n = m}^\infty) \) is \emph{not} piecewise syndetic and so \( \overline{\mathcal{H}} \cap c\ell\bigl( K(\mathbb{N}) \bigr) = \emptyset \).
            From part (a) \( \mathcal{H} \) is a product filter and so \( \mathcal{H} \subseteq \mathsf{Syn}(\mathcal{H}^*, \mathcal{H}) \subseteq \mathsf{Syn}(\mathcal{H}, \mathcal{H}) \).
            In particular there are \( (\mathcal{H}, \mathcal{H}) \)-syndetic sets which are not piecewise syndetic.          
    \end{itemize}
\end{example}

\subsection{Relative piecewise syndetic sets}
\label{section:relative-piecewise-syndetic}

\begin{definition}
\label{definition:relative-piecewise-syndetic}
	Let \( S \) be a semigroup and let \( \mathcal{F}, \mathcal{G} \) both be collections on \( S \).
	Define the collection \( \mathsf{PS}(\mathcal{F}, \mathcal{G}) := \mathsf{Syn}(\mathcal{F}, \mathcal{G}) \sqcap \mathsf{Thick}(\mathcal{F}, \mathcal{G}) \).
	The elements of \( \mathsf{PS}(\mathcal{F}, \mathcal{G}) \) are the \define{piecewise \( (\mathcal{F}, \mathcal{G}) \)-syndetic} sets.
\end{definition}
Shuungula, Zelenyuk, and Zelenyuk \cite[Section~2]{Shuungula:2009ty}  introduced a \emph{different} notion of relative piecewise syndetic sets: for a proper filter \( \mathcal{F} \) they defined \( A \subseteq S \) as \define{piecewise \( \mathcal{F} \)-syndetic} if and only if there exists \( q \in \overline{\mathcal{F}} \) such that \( A \in \mathsf{Syn}(\mathcal{F}, \mathsf{Thick}(\mathcal{F}, q)) \) (see \cite[Theorem~4.5]{Griffin:2024aa} for a proof of the equivalence between the definition found in \cite{Shuungula:2009ty} and the way we have presented it here).
Definition~\ref{definition:relative-piecewise-syndetic} was introduced in \cite{Christopherson:2022wr} to mainly ensure \( \mathsf{PS}(\mathcal{F}, \mathcal{G}) \) is a grill (by Propositions~\ref{proposition:relative-syndetic-thick} and \ref{proposition:grill}(b)), but it is not immediately clear if these two definitions of relative piecewise syndetic sets are equivalent.
If \( \mathcal{F} \) is a proper product filter, then  \cite[Corollary~4.5]{Christopherson:2022wr} shows \( A \in \mathsf{PS}(\mathcal{F}, \mathcal{F}) \) implies there exists \( q \in \overline{\mathcal{F}} \) with \( A \in \mathsf{Syn}(\mathcal{F}, \mathsf{Thick}(\mathcal{F}, q)) \) (that is, \( A \) is piecewise \( \mathcal{F} \)-syndetic in the sense of Shuungula, Zelenyuk, and Zelenyuk), while   \cite[Theorem~4.10]{Griffin:2024aa} proves the converse  (answering \cite[Question~4.6]{Christopherson:2022wr}).	

Our next goal is to obtain a characterization of relative piecewise syndetic sets analogous to Theorem~\ref{theorem:relative-syndetic-thick}.  
However, in order to obtain this goal and in contrast to the minimal assumptions placed on \( \mathcal{F} \) and  \( \mathcal{G} \) in Theorem~\ref{theorem:relative-syndetic-thick}, our proof will assume 
both \( \mathcal{F}, \mathcal{G} \) are proper filters and
they both satisfy some additional algebraic assumptions:
\( \mathcal{G} \) is a product filter and \( \mathcal{F} \subseteq \mathsf{Syn}(\mathcal{F}^*, \mathcal{G}) \).
The last assertion, by \cite[Theorem~3.11]{Christopherson:2022wr}, is equivalent to asserting \( \overline{\mathcal{F}} \cdot \overline{\mathcal{G}} \subseteq \overline{\mathcal{F}} \).
In this case, if \( \mathcal{G} \subseteq \mathcal{F} \), then this last assertion simply says \( \overline{\mathcal{F}} \) is a right ideal of \( \overline{\mathcal{G}} \).
But, in Example~\ref{example:derived-set-relative-syndetic} we note \( \overline{\mathcal{F}} \cdot \overline{\mathcal{G}} \subseteq \overline{\mathcal{F}} \) can also hold even when \( \mathcal{G} \not\subseteq \mathcal{F} \).

We start by noting elements of ``combinations'' (such as \( \mathsf{Syn}(\mathcal{F}, \mathsf{Thick}(\mathcal{G}, q)) \)) can be characterized in terms of derived sets:

\begin{proposition}
\label{proposition:derived-set-relative-syndetic}
	Let \( S \) be a semigroup, let \( \mathcal{F} \) be a proper stack, let \( \mathcal{G} \) be a proper filter, let \( q \in \beta S \), and let \( A \subseteq S \).
	The following are equivalent:

	\begin{itemize}
		\item[(a)]
		\( A'(q) \in \mathsf{Syn}(\mathcal{F}, \mathcal{G}) \).
				
		\item[(b)]
		\( A \in \mathsf{Thick}(\mathsf{Syn}(\mathcal{F}, \mathcal{G}), q) \).
				
		\item[(c)]
		\( A \in \mathsf{Syn}(\mathcal{F}, \mathsf{Thick}(\mathcal{G}, q)) \).
	\end{itemize}
\end{proposition}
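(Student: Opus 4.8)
The plan is to prove the two equivalences (a) $\Leftrightarrow$ (b) and (a) $\Leftrightarrow$ (c); the first is a direct unfolding of the definitions and the second combines Corollary~\ref{corollary:relative-syndetic-thick} with the product identities in Corollary~\ref{corollary:derived-set}. Every step uses that $q$ is an ultrafilter, hence $q^* = q$ (Proposition~\ref{proposition:filter-grill}(c)) and $q$ is at once a proper filter and a proper grill.

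First I would record a general fact: for \emph{any} stack $\mathcal{E}$ on $S$ one has $\mathsf{Thick}(\mathcal{E}, q) = \{ D \subseteq S : D'(q) \in \mathcal{E} \} = \mathcal{E} \cdot q$. This is Corollary~\ref{corollary:relative-syndetic-thick}(b) with second component the filter $q = q^*$; directly, if $B \in \mathcal{E}$ witnesses $D \in \mathsf{Thick}(\mathcal{E}, q)$, then applying the defining condition to singletons $H = \{h\}$ with $h \in B$ gives $h^{-1}D \in q^* = q$, so $B \subseteq D'(q)$ and therefore $D'(q) \in \mathcal{E}$ since $\mathcal{E}$ is a stack; conversely $B := D'(q)$ is a witness because $q$ is closed under finite intersections. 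Statement (a) $\Leftrightarrow$ (b) is now this fact applied with $\mathcal{E} := \mathsf{Syn}(\mathcal{F}, \mathcal{G})$, which is a stack by Proposition~\ref{proposition:relative-syndetic-thick}(a).

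For (a) $\Leftrightarrow$ (c), the same fact applied with $\mathcal{E} := \mathcal{G}$ gives $\mathsf{Thick}(\mathcal{G}, q) = \mathcal{G} \cdot q$, and since $\mathcal{G}$ and $q$ are proper filters, $\mathcal{G} \cdot q$ is a proper filter by Corollary~\ref{corollary:derived-set}(d, i) and (d, iii); hence $\overline{\mathcal{G} \cdot q} = \overline{\mathcal{G}} \cdot q = \{ p \cdot q : p \in \overline{\mathcal{G}} \}$ (Corollary~\ref{corollary:derived-set}(d, iii) together with \cite[Theorem~4.3]{Griffin:2024aa}, just as in the proof of Theorem~\ref{theorem:maximal-elements}). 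So statement (c) reads $A \in \mathsf{Syn}(\mathcal{F}, \mathcal{G} \cdot q)$, and applying Corollary~\ref{corollary:relative-syndetic-thick}(a) to the filter $\mathcal{G} \cdot q$ this is equivalent to $A'(r) \in \mathcal{F}^*$ for all $r \in \overline{\mathcal{G} \cdot q}$, i.e.\ $A'(p \cdot q) \in \mathcal{F}^*$ for all $p \in \overline{\mathcal{G}}$. Since $A'(p \cdot q) = \bigl( A'(q) \bigr)'(p)$ by Corollary~\ref{corollary:derived-set}(a, ii), this says $\bigl( A'(q) \bigr)'(p) \in \mathcal{F}^*$ for all $p \in \overline{\mathcal{G}}$; applying Corollary~\ref{corollary:relative-syndetic-thick}(a) now to $\mathcal{G}$ itself, that is precisely $A'(q) \in \mathsf{Syn}(\mathcal{F}, \mathcal{G})$, namely statement (a).

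The main obstacle is bookkeeping rather than depth: one must keep verifying that the relevant product collections ($\mathcal{G} \cdot q$ and the ultrafilter products $p \cdot q$) are genuine filters, since this is what makes $\overline{\mathcal{G} \cdot q} = \overline{\mathcal{G}} \cdot q$ valid and what lets Corollary~\ref{corollary:relative-syndetic-thick}(a) apply to $\mathsf{Syn}(\mathcal{F}, \mathcal{G} \cdot q)$. It is also worth noting why the hypotheses are as stated: properness of $\mathcal{F}$ and $\mathcal{G}$ kills the degenerate cases tabulated after Definitions~\ref{definition:product-of-collections} and \ref{definition:relative-syndetic-thick}, and $q$ being an ultrafilter (so $q^* = q$) is exactly what converts each ``$\mathsf{Thick}(\,\cdot\,, q)$'' clause into right-multiplication by $q$.
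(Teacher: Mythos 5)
Your proposal is correct and follows essentially the same route as the paper: (a) \( \Leftrightarrow \) (b) via the identity \( \mathsf{Thick}(\mathcal{E}, q) = \mathcal{E} \cdot q \) for a stack \( \mathcal{E} \) and ultrafilter \( q \) (Corollary~\ref{corollary:relative-syndetic-thick}(b), which you also verify directly), and (a) \( \Leftrightarrow \) (c) via \( \mathsf{Thick}(\mathcal{G}, q) = \mathcal{G} \cdot q \), the identification \( \overline{\mathcal{G} \cdot q} = \{ p \cdot q : p \in \overline{\mathcal{G}} \} \), and the rewriting \( A'(p \cdot q) = \bigl(A'(q)\bigr)'(p) \). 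The only difference is that you traverse the second chain of equivalences starting from (c) rather than (a), which is immaterial.
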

\begin{proof}
	\textbf{(a) \( \Leftrightarrow \) (b):} By Definition~\ref{definition:product-of-collections} we have \( A'(q) \in \mathsf{Syn}(\mathcal{F}, \mathcal{G}) \) if and only if \( A \in \mathsf{Syn}(\mathcal{F}, \mathcal{G}) \cdot q \).
	Since \( \mathsf{Syn}(\mathcal{F}, \mathcal{G}) \) is a stack (Proposition~\ref{proposition:relative-syndetic-thick}(a)), by Corollary~\ref{corollary:relative-syndetic-thick}(b) (second assertion) we have \( \mathsf{Syn}(\mathcal{F}, \mathcal{G}) \cdot q  = \mathsf{Thick}(\mathsf{Syn}(\mathcal{F}, \mathcal{G}), q) \).
	
	\smallskip
	
	\textbf{(a) \( \Leftrightarrow \) (c):}
	By Corollary~\ref{corollary:relative-syndetic-thick}(a) we have \( A'(q) \in \mathsf{Syn}(\mathcal{F}, \mathcal{G}) \) if and only if for all \( p \in \overline{\mathcal{G}} \) we have \( A'(p \cdot q ) = \bigl(A'(q)\bigr)'(p) \in \mathcal{F}^* \), where the equality follows from Corollary~\ref{corollary:derived-set}(a, ii).
	Therefore it follows that \( A'(q) \in \mathsf{Syn}(\mathcal{F}, \mathcal{G}) \) if and only if for all \( p \in \overline{\mathcal{G}} \) we have \( A \in \mathcal{F}^* \cdot p \cdot q \).
	
	Now from Corollary~\ref{corollary:relative-syndetic-thick}(b) (second assertion) we have \( \mathsf{Thick}(\mathcal{G}, q) = \mathcal{G} \cdot q \) and, by Corollary~\ref{corollary:derived-set}(d) \( \mathcal{G} \cdot q \) is a proper filter.
	It follows that \(  \overline{\mathsf{Thick}(\mathcal{G},q)} = \overline{\mathcal{G} \cdot q} = \{ p \cdot q : p \in \overline{\mathcal{G}} \} \) (or, for instance, see \cite[Theorem~4.3]{Griffin:2024aa} for a proof of this fact).
	
	Hence \( A'(q) \in \mathsf{Syn}(\mathcal{F}, \mathcal{G}) \) if and only if for all \( r \in \overline{\mathsf{Thick}(\mathcal{G},q)} \) we have \( A \in \mathcal{F}^* \cdot r \).
	Therefore by Corollary~\ref{corollary:relative-syndetic-thick}(a) this means \( A'(q) \in \mathsf{Syn}(\mathcal{F}, \mathcal{G}) \) if and only if \( A \in \mathsf{Syn}(\mathcal{F}, \mathsf{Thick}(\mathcal{G}, q)) \).
\end{proof}

\begin{remark}
\label{remark:minimal-left-ideal}
    If \( \mathcal{F} \) is a proper product filter and \( q \in \overline{\mathcal{F}} \), then a result of Davenport \cite[Corollary~3.3]{Davenport:1990wq} states, in our notation, that \( \overline{\mathcal{F}} \cdot q \) is a minimal left ideal of \( \overline{\mathcal{F}} \) if and only if \( \mathsf{Syn}(\mathcal{F}, q) \subseteq \mathsf{Syn}(\mathcal{F}, \mathsf{Thick}(\mathcal{F}, q)) \) (compare with Theorem~\ref{theorem:maximal-elements}).
\end{remark}

Now we show each of above the algebraic assumptions on \( \mathcal{F} \) and \( \mathcal{G} \) implies certain derived sets of relative syndetic or thick sets are themselves relatively syndetic or thick:

\begin{lemma}
\label{lemma:derived-set-relative-syndetic}
	Let \( S \) be a semigroup, let \( \mathcal{F} \) be a proper stack, and let \( \mathcal{G} \) be a proper filter.
	\begin{itemize}
		\item[(a)]
			If \( \mathcal{G} \) is a product filter and \( B \in \mathsf{Syn}(\mathcal{F}, \mathcal{G}) \), then for all \( q \in \overline{\mathcal{G}} \) we have \( B'(q) \in \mathsf{Syn}(\mathcal{F}, \mathcal{G}) \).
			
		\item[(b)]
			If \( \mathcal{F} \subseteq \mathsf{Syn}(\mathcal{F}^*, \mathcal{G}) \) and \( C \in \mathsf{Thick}(\mathcal{F}, \mathcal{G}) \), then there exists \( q \in \overline{\mathcal{G}} \) such that \( C'(q) \in \mathsf{Syn}(\mathcal{F}^*, \mathcal{G}) \).
			In particular, \( C'(q) \in \mathsf{Thick}(\mathcal{F}, \mathcal{G}) \).
	\end{itemize}
\end{lemma}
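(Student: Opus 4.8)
The plan is to translate every occurrence of $\mathsf{Syn}$ and $\mathsf{Thick}$ into the derived-set language supplied by Corollary~\ref{corollary:relative-syndetic-thick} and then to absorb iterated derived sets via the associativity identity $(A'(q))'(p) = A'(p\cdot q)$ of Corollary~\ref{corollary:derived-set}(a,\,ii). Throughout I will use that for a stack $\mathcal{H}$ one has $(\mathcal{H}^*)^* = \mathcal{H}^{**} = \mathcal{H}$ (Proposition~\ref{proposition:stack}(d)); in particular Corollary~\ref{corollary:relative-syndetic-thick}(a) applied with first argument $\mathcal{F}^*$ reads $\mathsf{Syn}(\mathcal{F}^*,\mathcal{G}) = \{A \subseteq S : A'(p) \in \mathcal{F} \text{ for all } p \in \overline{\mathcal{G}}\}$, and likewise with $\mathcal{G}^*$ in place of $\mathcal{F}^*$. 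I will also use the two cited reformulations of the hypotheses: $\mathcal{G} \subseteq \mathsf{Syn}(\mathcal{G}^*,\mathcal{G})$ says $\overline{\mathcal{G}}$ is a closed subsemigroup of $\beta S$ (so $\overline{\mathcal{G}}\cdot\overline{\mathcal{G}} \subseteq \overline{\mathcal{G}}$), while $\mathcal{F} \subseteq \mathsf{Syn}(\mathcal{F}^*,\mathcal{G})$ will simply be used in the form stated.

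For part (a), fix $q \in \overline{\mathcal{G}}$. By Corollary~\ref{corollary:relative-syndetic-thick}(a), $B'(q) \in \mathsf{Syn}(\mathcal{F},\mathcal{G})$ if and only if $(B'(q))'(p) \in \mathcal{F}^*$ for every $p \in \overline{\mathcal{G}}$. By Corollary~\ref{corollary:derived-set}(a,\,ii) we have $(B'(q))'(p) = B'(p\cdot q)$, and since $\overline{\mathcal{G}}$ is a subsemigroup, $p\cdot q \in \overline{\mathcal{G}}$. Finally $B \in \mathsf{Syn}(\mathcal{F},\mathcal{G})$ gives, again by Corollary~\ref{corollary:relative-syndetic-thick}(a), that $B'(r) \in \mathcal{F}^*$ for every $r \in \overline{\mathcal{G}}$; taking $r = p\cdot q$ completes the argument. (An alternative route runs through Proposition~\ref{proposition:derived-set-relative-syndetic}: the hypothesis forces $\mathcal{G} \subseteq \mathsf{Thick}(\mathcal{G},q) = \mathcal{G}\cdot q$ for $q \in \overline{\mathcal{G}}$, hence, by monotonicity of $\mathsf{Syn}(\mathcal{F},-)$ in its second slot, $\mathsf{Syn}(\mathcal{F},\mathcal{G}) \subseteq \mathsf{Syn}(\mathcal{F},\mathsf{Thick}(\mathcal{G},q)) = \{A : A'(q) \in \mathsf{Syn}(\mathcal{F},\mathcal{G})\}$; but the direct route seems shortest.)

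For part (b), since $C \in \mathsf{Thick}(\mathcal{F},\mathcal{G})$, Corollary~\ref{corollary:relative-syndetic-thick}(b) produces $q \in \overline{\mathcal{G}}$ with $C'(q) \in \mathcal{F}$. The hypothesis $\mathcal{F} \subseteq \mathsf{Syn}(\mathcal{F}^*,\mathcal{G})$ then yields $C'(q) \in \mathsf{Syn}(\mathcal{F}^*,\mathcal{G})$ immediately, which is the first assertion. For the ``in particular'' clause, compare the characterizations $\mathsf{Syn}(\mathcal{F}^*,\mathcal{G}) = \{A : A'(p)\in\mathcal{F} \text{ for all } p\in\overline{\mathcal{G}}\}$ and $\mathsf{Thick}(\mathcal{F},\mathcal{G}) = \{A : A'(p)\in\mathcal{F} \text{ for some } p\in\overline{\mathcal{G}}\}$ coming from Corollary~\ref{corollary:relative-syndetic-thick}: because $\mathcal{G}$ is a proper filter, $\overline{\mathcal{G}} \neq \emptyset$, so a statement quantified universally over $\overline{\mathcal{G}}$ implies the corresponding existential statement, whence $\mathsf{Syn}(\mathcal{F}^*,\mathcal{G}) \subseteq \mathsf{Thick}(\mathcal{F},\mathcal{G})$ and in particular $C'(q) \in \mathsf{Thick}(\mathcal{F},\mathcal{G})$.

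I do not expect a genuine obstacle: the statement is essentially a bookkeeping consequence of the characterizations already established. The only points that need care are (i) keeping the double-mesh simplifications $(\mathcal{F}^*)^* = \mathcal{F}$ and $(\mathcal{G}^*)^* = \mathcal{G}$ straight when feeding $\mathcal{F}^*$ or $\mathcal{G}^*$ into Corollary~\ref{corollary:relative-syndetic-thick}, and (ii) in part (a), recognizing that the closed-subsemigroup reformulation of the hypothesis $\mathcal{G} \subseteq \mathsf{Syn}(\mathcal{G}^*,\mathcal{G})$ is exactly what is needed to keep $p\cdot q$ inside $\overline{\mathcal{G}}$ after applying the associativity identity for iterated derived sets.
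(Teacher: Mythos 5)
Your proof is correct, and part~(a) runs along exactly the same lines as the paper: translate to derived sets via Corollary~\ref{corollary:relative-syndetic-thick}(a), absorb the iterated derived set with $(B'(q))'(p) = B'(p\cdot q)$, and invoke the closed-subsemigroup reformulation of the hypothesis on $\mathcal{G}$ to keep $p\cdot q$ inside $\overline{\mathcal{G}}$. In part~(b) you take a genuine shortcut: once Corollary~\ref{corollary:relative-syndetic-thick}(b) yields $q$ with $C'(q)\in\mathcal{F}$, you apply the hypothesis $\mathcal{F}\subseteq\mathsf{Syn}(\mathcal{F}^*,\mathcal{G})$ directly as a set inclusion and are done in one line, whereas the paper first unpacks that hypothesis into $\mathcal{F}\subseteq\mathcal{F}\cdot p$ for all $p\in\overline{\mathcal{G}}$, pushes through the chain $C\in\mathcal{F}\cdot q\subseteq\mathcal{F}\cdot p\cdot q$, and only then reads off $C'(q)\in\mathsf{Syn}(\mathcal{F}^*,\mathcal{G})$ from Corollary~\ref{corollary:relative-syndetic-thick}(a). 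Both are valid; your version is cleaner because it uses the hypothesis at the level it is stated rather than re-deriving its derived-set translation. The `in particular' clause is also handled correctly: $\overline{\mathcal{G}}\neq\emptyset$ because $\mathcal{G}$ is a proper filter, so the universal characterization of $\mathsf{Syn}(\mathcal{F}^*,\mathcal{G})$ implies the existential one of $\mathsf{Thick}(\mathcal{F},\mathcal{G})$, matching what the paper asserts.
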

\begin{proof}
	\textbf{(a):}
	From Corollary~\ref{corollary:relative-syndetic-thick}(a) we have for all \( p \in \overline{\mathcal{G}} \) that \( B'(p) \in \mathcal{F}^* \).
	Fix (any) \( q \in \overline{\mathcal{G}} \).
	Then for all \( p \in \overline{\mathcal{G}} \) we have \( p \cdot q \in \overline{\mathcal{G}} \) and \( B'(p \cdot q) \in \mathcal{F}^* \).
	Therefore from Corollary~\ref{corollary:derived-set} it follows that \( B'(q) \in \mathcal{F}^* \cdot p \) for all \( p \in \overline{\mathcal{G}} \), that is, from Corollary~\ref{corollary:relative-syndetic-thick}(a) \( B'(q) \in \mathsf{Syn}(\mathcal{F}, \mathcal{G}) \).
	\smallskip
	
	\textbf{(b):}
	The proof of this statement is similar to the proof of statement (a).
	From Corollary~\ref{corollary:relative-syndetic-thick}(b) pick \( q \in \overline{\mathcal{G}} \) with \( C \in \mathcal{F} \cdot q \).
	By the algebraic assumption on \( \mathcal{F} \) and Corollary~\ref{corollary:relative-syndetic-thick}(a) we have \( \mathcal{F} \subseteq \mathcal{F} \cdot p \) for all \( p \in \overline{\mathcal{G}} \).
	Hence, from Corollary~\ref{corollary:derived-set}(b), for all \( p \in \overline{\mathcal{G}} \) we have \( C \in \mathcal{F} \cdot q \subseteq \mathcal{F} \cdot p \cdot q \).
	So, for all \( p \in \overline{\mathcal{G}} \) we have \( C'(q) \in \mathcal{F} \cdot p \) and by Corollary~\ref{corollary:relative-syndetic-thick}(a) we have \( C'(q) \in \mathsf{Syn}(\mathcal{F}^*, \mathcal{G}) \).
	The `in particular' statement also follows from Corollary~\ref{corollary:relative-syndetic-thick}, which implies \( \mathsf{Syn}(\mathcal{F}^*, \mathcal{G}) \subseteq  \mathsf{Thick}(\mathcal{F}, \mathcal{G}) \).
\end{proof}

\begin{example}
\label{example:derived-set-relative-syndetic}
    Recall the proper filters \( \mathcal{H} \) and \( \mathcal{D} \) from Example~\ref{example:filters}.
    By \cite[Theorem~7.12]{Hindman:1980aa} \( \overline{\mathcal{D}} = \{ p \in \beta \mathbb{N} : (\forall \, A \in p) \; \mathsf{bd}^\star(A) > 0 \} \) is a closed two-sided ideal of \( (\beta \mathbb{N}, +) \).
    (See also the recent result of Glasscock, Hindman, and Strauss \cite[Theorem~2.8]{Glasscock:2025aa} for an analogous statement for left amenable semigroups.)
    Hence \( \mathcal{D} \subseteq \mathsf{Syn}(\mathcal{D}^*, \mathcal{G}) \) for all proper filters \( \mathcal{G} \).
    Adams \cite[Theorem~2.21]{Adams:2007aa} gives an explicit construction of a moderately growing sequence \( \langle x_n \rangle_{n = 1}^\infty \) such that \( \overline{\mathcal{H}} \cap c\ell(K(\beta\mathbb{N})) = \emptyset \) and \( \emptyset \ne  \overline{\mathcal{D}} \cap \overline{\mathcal{H}} \) (compare with Example~\ref{example:product-filters}(b)).
    Therefore by Example~\ref{example:product-filters}(a) and Proposition~\ref{proposition:product-filters} we have \( \mathcal{D} \sqcap \mathcal{H} \) is a proper product filter.
    Since \( \mathcal{D} \subseteq \mathsf{Syn} \), we have \( \mathcal{H} \not\subseteq \mathcal{D} \) and so \( \mathcal{D} \sqcap \mathcal{H}  \not\subseteq \mathcal{D} \) but \( \mathcal{D} \subseteq \mathsf{Syn}(\mathcal{D}^*, \mathcal{D} \sqcap \mathcal{H}) \).

\end{example}

In the proof for Lemma~\ref{lemma:derived-set-relative-syndetic}(a), since \( \overline{\mathcal{G}} \) is a closed subsemigroup, we have a great deal of freedom in picking an ultrafilter \( q \) witnessing the relative syndeticity of \( B'(q) \).
For instance \( q \) could be a (minimal) idempotent in \( \overline{\mathcal{G}} \). 
This idea appears (in a more complicated form) several times in the proof of our main result characterizing relative piecewise syndetic sets: 

\begin{theorem}
\label{theorem:relative-piecewise-syndetic}
	Let \( \mathcal{F}, \mathcal{G} \) both be proper filters on a semigroup \( S \) with \( \mathcal{F} \subseteq \mathsf{Syn}(\mathcal{F}^*, \mathcal{G}) \) and \( \mathcal{G} \) a product filter.
	For \( A \subseteq S \) the following statements are equivalent:
	\begin{itemize}
		\item[(a)]
			\( A \in \mathsf{PS}(\mathcal{F}, \mathcal{G}) \).
			
		\item[(b)] 
			There exists \( q \in \overline{\mathcal{G}} \) such that \( A'(q) \in \mathsf{Syn}(\mathcal{F}, \mathcal{G}) \).
			
		\item[(c)]
			There exists \( q \in K(\overline{\mathcal{G}}) \) such that \( A'(q) \in \mathsf{Syn}(\mathcal{F}, \mathcal{G}) \).
			
		\item[(d)]
			There exists \( e \in E\bigl(K(\overline{\mathcal{G}})\bigr) \) such that \( A'(e) \in \mathsf{Syn}(\mathcal{F}, \mathcal{G}) \).
			
		\item[(e)]
			There exists \( e \in E(\overline{\mathcal{G}}) \) such that \( A'(e) \in \mathsf{Syn}(\mathcal{F}, \mathcal{G}) \).
	\end{itemize}
\end{theorem}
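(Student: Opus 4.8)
The plan is to run the cycle (a)$\Rightarrow$(b)$\Rightarrow$(c)$\Rightarrow$(d)$\Rightarrow$(e)$\Rightarrow$(a), mirroring the proof of Theorem~\ref{theorem:characterization-piecewise-syndetic} but with $\beta S$ replaced throughout by the closed subsemigroup $\overline{\mathcal{G}}$. This replacement is legitimate precisely because $\mathcal{G} \subseteq \mathsf{Syn}(\mathcal{G}^*,\mathcal{G})$ makes $\overline{\mathcal{G}}$ a compact right topological subsemigroup (by \cite[Corollary~3.12]{Christopherson:2022wr}), so $\overline{\mathcal{G}}$ has a smallest ideal $K(\overline{\mathcal{G}})$ that is a union of minimal left ideals and contains idempotents. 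Two preliminary facts organize the argument. First, a \emph{combining lemma}: by Corollary~\ref{corollary:relative-syndetic-thick}(a), $\mathsf{Syn}(\mathcal{F},\mathcal{G}) = \{X : X'(p)\in\mathcal{F}^* \text{ for all } p\in\overline{\mathcal{G}}\}$ and $\mathsf{Syn}(\mathcal{F}^*,\mathcal{G}) = \{X : X'(p)\in\mathcal{F} \text{ for all } p\in\overline{\mathcal{G}}\}$ (using $\mathcal{F}^{**}=\mathcal{F}$); since $\mathcal{F}$ is a filter, the intersection of a member of $\mathcal{F}$ with a member of $\mathcal{F}^*$ lies again in $\mathcal{F}^*$, which together with Proposition~\ref{proposition:derived-set}(a,iii) gives $\mathsf{Syn}(\mathcal{F},\mathcal{G}) * \mathsf{Syn}(\mathcal{F}^*,\mathcal{G}) \subseteq \mathsf{Syn}(\mathcal{F},\mathcal{G})$. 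Second, Lemma~\ref{lemma:derived-set-relative-syndetic}(a) together with Corollary~\ref{corollary:derived-set}(a,ii) shows that $I := \{q\in\overline{\mathcal{G}} : A'(q)\in\mathsf{Syn}(\mathcal{F},\mathcal{G})\}$ is, whenever nonempty, a left ideal of $\overline{\mathcal{G}}$, since $A'(p\cdot q) = (A'(q))'(p)\in\mathsf{Syn}(\mathcal{F},\mathcal{G})$ for all $p\in\overline{\mathcal{G}}$.

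For (a)$\Rightarrow$(b), write $A = B\cap C$ with $B\in\mathsf{Syn}(\mathcal{F},\mathcal{G})$ and $C\in\mathsf{Thick}(\mathcal{F},\mathcal{G})$. Lemma~\ref{lemma:derived-set-relative-syndetic}(b), whose proof uses the hypothesis $\mathcal{F}\subseteq\mathsf{Syn}(\mathcal{F}^*,\mathcal{G})$, yields $q_0\in\overline{\mathcal{G}}$ with $C'(q_0)\in\mathsf{Syn}(\mathcal{F}^*,\mathcal{G})$; Lemma~\ref{lemma:derived-set-relative-syndetic}(a) gives $B'(q_0)\in\mathsf{Syn}(\mathcal{F},\mathcal{G})$; and as $q_0$ is an ultrafilter, $A'(q_0) = B'(q_0)\cap C'(q_0)$, which is in $\mathsf{Syn}(\mathcal{F},\mathcal{G})$ by the combining lemma. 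The block (b)$\Rightarrow$(c)$\Rightarrow$(d)$\Rightarrow$(e) is then a soft argument about the left ideal $I$: if $I\ne\emptyset$, fix $q\in I$; then $\overline{\mathcal{G}}\cdot q$ is a closed left ideal of $\overline{\mathcal{G}}$ contained in $I$, hence it contains a minimal left ideal $L\subseteq K(\overline{\mathcal{G}})$, and $L$ contains an idempotent; since $L\subseteq I$, any such idempotent $e\in E(L)\subseteq E(K(\overline{\mathcal{G}}))$ satisfies $A'(e)\in\mathsf{Syn}(\mathcal{F},\mathcal{G})$. This simultaneously delivers (b)$\Rightarrow$(c), (c)$\Rightarrow$(d), and (d)$\Rightarrow$(e) (the last via the inclusion $E(K(\overline{\mathcal{G}}))\subseteq E(\overline{\mathcal{G}})$).

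The final and most delicate step is (e)$\Rightarrow$(a). Given an idempotent $e\in E(\overline{\mathcal{G}})$ with $A'(e)\in\mathsf{Syn}(\mathcal{F},\mathcal{G})$, I would set $B_0 := A'(e)$ and $C_0 := A\cup(S\setminus A'(e))$, so that $B_0\cap C_0 = A\cap A'(e)\subseteq A$ and $B_0\in\mathsf{Syn}(\mathcal{F},\mathcal{G})$ by hypothesis. The key computation is that $C_0\in\mathsf{Thick}(\mathcal{F},\mathcal{G})$: since $X\mapsto X'(e)$ is a Boolean endomorphism of $\mathcal{P}(S)$ (Proposition~\ref{proposition:derived-set}) and $(A'(e))'(e) = A'(e\cdot e) = A'(e)$ by idempotency and Corollary~\ref{corollary:derived-set}(a,ii), we get $C_0'(e) = A'(e)\cup(S\setminus A'(e)) = S\in\mathcal{F}$, so $C_0$ is $(\mathcal{F},\mathcal{G})$-thick by Corollary~\ref{corollary:relative-syndetic-thick}(b) with witness $e$. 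Hence $B_0\cap C_0\in\mathsf{Syn}(\mathcal{F},\mathcal{G})*\mathsf{Thick}(\mathcal{F},\mathcal{G}) = \mathsf{PS}(\mathcal{F},\mathcal{G})$, and since $\mathsf{PS}(\mathcal{F},\mathcal{G})$ is a stack (indeed a grill, by Propositions~\ref{proposition:relative-syndetic-thick} and \ref{proposition:grill}(a)) and $B_0\cap C_0\subseteq A$, we conclude $A\in\mathsf{PS}(\mathcal{F},\mathcal{G})$. The main obstacle I expect is exactly isolating this trick for (e)$\Rightarrow$(a): the earlier characterizations do not obviously invert, and it is the idempotency of $e$ (which makes $A'(e)$ a fixed point of $X\mapsto X'(e)$, so that the complement $S\setminus A'(e)$ can be absorbed into a thick set) that makes the reconstruction go through; attempting (b)$\Rightarrow$(a) directly, without first passing to an idempotent, appears to fail.
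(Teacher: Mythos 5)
Your proof is correct and follows essentially the same route as the paper's: the same decomposition $A = B \cap C$ with Lemma~\ref{lemma:derived-set-relative-syndetic} for (a)$\Rightarrow$(b), and the identical idempotency trick $A = \bigl(A \cup A'(e)\bigr) \cap \bigl(A \cup (S\setminus A)'(e)\bigr)$ for (e)$\Rightarrow$(a). Your only real departure is packaging (b)$\Rightarrow$(c)$\Rightarrow$(d) as the observation that $I = \{ q \in \overline{\mathcal{G}} : A'(q) \in \mathsf{Syn}(\mathcal{F},\mathcal{G}) \}$ is a left ideal of $\overline{\mathcal{G}}$ and extracting a minimal left ideal with an idempotent, which is a clean and valid reorganization of the paper's two explicit element-chasing steps.
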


\begin{proof}

    Since \( \overline{\mathcal{G}} \) is a compact right topological semigroup its smallest ideal \( K(\overline{\mathcal{G}}) \) exists, \( K(\overline{\mathcal{G}}) \) is a union of all minimal left ideals of \( \overline{\mathcal{G}} \), and every minimal left ideal \( L \) of \( \overline{\mathcal{G}} \) contains an idempotent.

	\smallskip
	
	\textbf{(a) \( \Rightarrow \) (b):}
	Pick \( B \in \mathsf{Syn}(\mathcal{F}, \mathcal{G}) \) and \( C \in \mathsf{Thick}(\mathcal{F}, \mathcal{G}) \) as guaranteed for \( A \).
	By Lemma~\ref{lemma:derived-set-relative-syndetic}(b) pick \( q \in \overline{\mathcal{G}} \) such that \( C'(q) \in \mathsf{Syn}(\mathcal{F}^*, \mathcal{G}) \).
	Then Lemma~\ref{lemma:derived-set-relative-syndetic}(a) implies \( B'(q) \in \mathsf{Syn}(\mathcal{F}, \mathcal{G}) \).
	
	From Proposition~\ref{proposition:derived-set}(a, iii) we have \( A'(q) = (B \cap C)'(q) = B'(q) \cap C'(q) \) and we show \( B'(q) \cap C'(q) \in \mathsf{Syn}(\mathcal{F}, \mathcal{G}) \).
	To that end let \( p \in \overline{\mathcal{G}} \).
	By Corollary~\ref{corollary:relative-syndetic-thick}(a) we have \( B'(q) \in \mathcal{F}^* \cdot p \) and \( C'(q) \in \mathcal{F} \cdot p \).
	We have \( \mathcal{F} \subseteq \mathcal{F}^* \) (by Proposition~\ref{proposition:filter-grill}(b), since \( \mathcal{F} \) is a proper filter) and Corollary~\ref{corollary:derived-set}(c) yields \( \mathcal{F} \cdot p \subseteq \mathcal{F}^* \cdot p \).
	Hence it follows that \( B'(q) \cap C'(q) \in \mathcal{F}^* \cdot p \), and from Corollary~\ref{corollary:relative-syndetic-thick}(a) it follows \( B'(q) \cap C'(q) \in \mathsf{Syn}(\mathcal{F}, \mathcal{G}) \).

	\smallskip

	\textbf{(b) \( \Rightarrow \) (c):}
	Pick \( q \in \overline{\mathcal{G}} \) as guaranteed for \( A \) and fix \( p \in K(\overline{\mathcal{G}}) \).
	Then, by Corollary~\ref{corollary:derived-set}(a, ii), for all \( u \in \overline{\mathcal{G}} \) we have
	\[
		A'(q) \in \mathcal{F}^* \cdot u \cdot p \iff A'(p \cdot q) \in \mathcal{F}^* \cdot u,
	\]
	Because \( K(\overline{\mathcal{G}}) \) is an ideal of \( \overline{\mathcal{G}} \) we have both \( u \cdot p \) and \( p \cdot q \) are elements of \( K(\overline{\mathcal{G}}) \subseteq \overline{\mathcal{G}} \).
	Therefore by Corollary~\ref{corollary:relative-syndetic-thick}(a) applied to \( A'(q) \in \mathsf{Syn}(\mathcal{F}, \mathcal{G}) \) we have \( A'(q) \in \mathcal{F}^* \cdot u \cdot p \) for all \( u \in \overline{\mathcal{G}} \).
	Hence, from Corollary~\ref{corollary:relative-syndetic-thick}(a) and the above equivalence we have \( A'(p \cdot q) \in \mathsf{Syn}(\mathcal{F}, \mathcal{G}) \).
		
	\smallskip
	
	\textbf{(c) \( \Rightarrow \) (d):}
	Pick \( q \in K(\overline{\mathcal{G}}) \) as guaranteed for \( A \).
	Let \( L \) be a minimal left ideal of \( \overline{\mathcal{G}} \) with \( q \in L \) and let \( e \) be an idempotent in \( L \).
	Then \( \overline{\mathcal{G}} \cdot q \) is a left ideal of \( \overline{\mathcal{G}} \) contained in \( L \).
	Therefore by minimality of \( L \) we have \( \overline{\mathcal{G}} \cdot q = L \).

	We show \( A'(e) \in \mathsf{Syn}(\mathcal{F}, \mathcal{G}) \).
	To that end let \( p \in \overline{\mathcal{G}} \).
	Then \( p \cdot e \in L = \overline{\mathcal{G}} \cdot q \) implies there exists \( u \in \overline{\mathcal{G}} \) such that \( p \cdot e = u \cdot q \).
	We then have
	\begin{align*}
		A'(q) \in \mathsf{Syn}(\mathcal{F}, \mathcal{G}) &\implies A'(q) \in \mathcal{F}^* \cdot u &\text{(Corollary~\ref{corollary:relative-syndetic-thick}(a))}\\
		&\iff A \in \mathcal{F}^* \cdot u \cdot q = \mathcal{F}^* \cdot p \cdot e \\
		&\iff A'(e) \in \mathcal{F}^* \cdot p.
	\end{align*}
	Therefore from Corollary~\ref{corollary:relative-syndetic-thick}(a) it follows that \( A'(e) \in \mathsf{Syn}(\mathcal{F}, \mathcal{G}) \).

	\smallskip
	
	\textbf{(d) \( \Rightarrow \) (e):}
	This assertion is immediate.
	
	\smallskip

	\textbf{(e) \( \Rightarrow \) (a):}
	Pick \( e \in E(\overline{\mathcal{G}}) \) as guaranteed for \( A \) and note we have \( A \cup A'(e) \in \mathsf{Syn}(\mathcal{F}, \mathcal{G}) \).
	By Proposition~\ref{proposition:derived-set}(a, iii) we have
	\[
		A = \bigl( A \cup A'(e) \bigr) \cap \bigl( A \cup (S\setminus A)'(e) \bigr).
	\]
	So it suffices to show \( A \cup (S\setminus A)'(e) \in \mathsf{Thick}(\mathcal{F}, \mathcal{G}) \).
	To that end note 
		\begin{align*}
		\bigl( A \cup (S\setminus A)'(e) \bigr)'(e) &= A'(e) \cup \bigl( (S \setminus A)'(e) \bigr)'(e) &\text{(Proposition~\ref{proposition:derived-set}(a, iii))} \\
		&= A'(e) \cup (S \setminus A)'(e \cdot e) &\text{(Corollary~\ref{corollary:derived-set}(a, ii))} \\
		&= A'(e) \cup (S \setminus A)'(e)  &\text{(\( e = e \cdot e \))} \\
		&= \bigl(A \cup (S\setminus A)\bigr)'(e) &\text{(Proposition~\ref{proposition:derived-set}(a, iii))}\\
		&= S'(e) = S \in \mathcal{F}.
	\end{align*}
	Therefore from Corollary~\ref{corollary:relative-syndetic-thick}(b) we have \( A \cup (S \setminus A)'(e) \in \mathsf{Thick}(\mathcal{F}, \mathcal{G}) \).
	(Note the above, again combined with Corollary~\ref{corollary:relative-syndetic-thick}(b), shows something stronger: \( A \cup (S \setminus A)'(e) \) is thick.)
\end{proof}

Therefore by Proposition~\ref{proposition:derived-set-relative-syndetic} and Theorem~\ref{theorem:relative-piecewise-syndetic} if \( \mathcal{F}, \mathcal{G} \) are proper filters with \( \mathcal{F} \subseteq \mathsf{Syn}(\mathcal{F}^*, \mathcal{G}) \) and \( \mathcal{G} \) is a product filter, then we have, for instance,
\[
    \mathsf{PS}(\mathcal{F}, \mathcal{G}) = \bigcup_{q \in \overline{\mathcal{G}}} \mathsf{Syn}(\mathcal{F}, \mathsf{Thick}(\mathcal{G}, q)) = 
       \bigcup_{e \in E(K(\overline{\mathcal{G}}))} \mathsf{Syn}(\mathcal{F}, \mathsf{Thick}(\mathcal{G}, e)).
\]
Further we have, under the appropriate assumptions on proper filters \( \mathcal{F} \) and \( \mathcal{G} \), that a generalization of Shuungula, Zelenyuk, and Zelenyuk's original definition of piecewise \( \mathcal{F} \)-syndetic \cite[Section~2]{Shuungula:2009ty} also holds:
\begin{corollary}
\label{corollary:relative-piecewise-syndetic}
    Let \( \mathcal{F}, \mathcal{G} \) both be proper filters on a semigroup \( S \) with \( \mathcal{F} \subseteq \mathsf{Syn}(\mathcal{F}^*, \mathcal{G}) \) and \( \mathcal{G} \) a product filter.
	For \( A \subseteq S \) the following statements are equivalent:
    \begin{itemize}
        \item[(a)]
            \( A \in \mathsf{PS}(\mathcal{F}, \mathcal{G}) \).

        \item[(b)]
            There exist \( H \in \bigtimes_{B \in \mathcal{F}} \mathcal{P}_f(B) \) and \( W \in \bigtimes_{B \in \mathcal{F}} \mathcal{G} \) such that
            \[
                \bigl\{ \textstyle w^{-1}\bigl( \bigcup_{h \in H(B)} h^{-1}A \bigr) : B \in \mathcal{F} \text{ and } w \in W(B) \bigr\} \cup \mathcal{G}
            \]
            has the finite intersection property (f.i.p.).
    \end{itemize}
\end{corollary}
\begin{proof}
    \textbf{(a) \( \Rightarrow \) (b):}
    By Theorem~\ref{theorem:relative-piecewise-syndetic} pick \( q \in \overline{\mathcal{G}} \) with \( A'(q) \in \mathsf{Syn}(\mathcal{F}, \mathcal{G}) \).
    Then by Definition~\ref{definition:relative-syndetic-thick}(a) there exists \( H \in \bigtimes_{B \in \mathcal{F}} \mathcal{P}_f(B) \) such that \( \bigcup_{h \in H(B)} h^{-1}A'(q) \in \mathcal{G} \) for each \( B \in \mathcal{F} \).
    By Proposition~\ref{proposition:derived-set}(a, iii) and (d) we have each \( \bigcup_{h \in H(B)} h^{-1}A'(q) = \bigl( \bigcup_{h \in H(B)} h^{-1}A \bigr)'(q) \).
    Put \( W(B) :=  \bigl( \bigcup_{h \in H(B)} h^{-1}A \bigr)'(q) \) for each \( B \in \mathcal{F} \).

    To see that f.i.p.~holds let \( B_1, B_2, \ldots, B_n \in \mathcal{F} \),  for each \( i \in \{1, 2, \ldots, n\} \) let \( W_i \in \mathcal{P}_f(W(B_i)) \), and let \( C \in \mathcal{G} \).
    Then we have \( C \in q \) and for each \( i \in \{1, 2, \ldots, n \} \) we have \( w \in W_i \) implies \( w^{-1}\bigl(\bigcup_{h \in H(B_i)} h^{-1}A \bigr) \in q \).
    Therefore
    \[
        C \cap \bigcap\bigl\{ \textstyle w^{-1}\bigl(\bigcup_{h \in H(B_i)} h^{-1}A) : 
            i \in \{1, 2, \ldots, n\} \text{ and } 
            w \in W_i \bigr\} \in q
    \]

    \smallskip

    \textbf{(b) \( \Rightarrow \) (a):}
    Pick \( H \in \bigtimes_{B \in \mathcal{F}} \mathcal{P}_f(B) \) and \( W \in \bigtimes_{B \in \mathcal{F}} \mathcal{G} \) as guaranteed for \( A \).
    By assumption let \( q \in \beta S \) with  \( \bigl\{ \textstyle w^{-1}\bigl( \bigcup_{h \in H(B)} h^{-1}A \bigr) : B \in \mathcal{F} \text{ and } w \in W(B) \bigr\} \cup \mathcal{G} \subseteq q \).
    Since \( \mathcal{G} \subseteq q \) we have \( q \in \overline{\mathcal{G}} \) and note \( W(B) \subseteq \bigl( \bigcup_{h \in H(B)} h^{-1}A \bigr)'(q) \) for every \( B \in \mathcal{F} \).
    Hence, by Proposition~\ref{proposition:derived-set}(a, iii) and (d) we have \( \bigcup_{h \in H(B)} h^{-1}A'(q) \in \mathcal{G} \) for each \( B \in \mathcal{G} \), that is, \( A'(q) \in \mathsf{Syn}(\mathcal{F}, \mathcal{G}) \).
    Therefore by Theorem~\ref{theorem:relative-piecewise-syndetic} we have \( A \in \mathsf{PS}(\mathcal{F}, \mathcal{G}) \).
\end{proof}

\begin{example}
\label{example:relative-piecewise-syndetic}
    Recall the proper filters \( \mathcal{C} \), \( \mathcal{H} \), and \( \mathcal{D} \) from Example~\ref{example:filters}.
    As in Example~\ref{example:relative-syndetic-thick}(b) suppose we have picked a sequence \( \langle x_n \rangle_{n = 1}^\infty \) such that each \( \mathrm{FS}(\langle x_n \rangle_{n = m}^\infty) \) is \emph{not} (piecewise) syndetic.

    \begin{itemize}
        \item[(a)]   
            By Proposition~\ref{proposition:relative-syndetic-thick}, 
                for every collections \( \mathcal{F}, \mathcal{G} \) 
                    if \( \mathsf{Thick}(\mathcal{F}, \mathcal{G}) = \mathsf{Thick} \) 
                    (equivalently:  \( \mathsf{Syn}(\mathcal{F}, \mathcal{G}) = \mathsf{Syn} \)), 
                    then  \( \mathsf{PS}(\mathcal{F}, \mathcal{G}) = \mathsf{PS} \).
            In particular, as noted in \cite[Example~3.6]{Christopherson:2022wr} we have \( \mathsf{Thick}(\mathcal{C}, \mathcal{C}) = \mathsf{Thick} \) in \( (\mathbb{N}, +) \) and so
            therefore \( \mathsf{PS}(\mathcal{C}, \mathcal{C}) = \mathsf{PS} \).
            Even though we have this equivalence between ``relative'' and ``absolute'' piecewise syndetic notions, it is interesting to note, say from Corollary~\ref{corollary:relative-piecewise-syndetic}, \( A \in \mathsf{PS}(\mathcal{C}, \mathcal{C}) \) if and only if for all \( k \in \mathbb{N} \) there exist \( \ell \in \mathbb{N} \) with \( \ell > k \) and \( m \in \mathbb{N} \) such that for all \( x \in \mathbb{N} \) with \( x > m \) there exists \( h_x \in [k, \ell] \) such that \( \{ -x + (-h_x + A) : x \in \mathbb{N} \text{ and } x > m \} \) has the infinite finite intersection property, that is, for finitely many positive integers \( x_1, x_2, \ldots, x_n \) with each \( x_i > m \) we have \( \bigcap_{i = 1}^n -x_i + (-h_{x_i} +A) \) is infinite.

        \item[(b)]
            Similarly, one can show \( \mathsf{Thick}(\mathcal{D}, \mathcal{D}) = \mathsf{Thick} \) and so \( \mathsf{PS}(\mathcal{D}, \mathcal{D}) = \mathsf{PS} \).
            
        \item[(c)]
            For every collection \( \mathcal{F}, \mathcal{G} \) we have \( \mathsf{Syn}(\mathcal{F}, \mathcal{G}) \subseteq \mathsf{PS}(\mathcal{F}, \mathcal{G}) \).
            Hence, as noted in Example~\ref{example:relative-syndetic-thick}(b) we have \( \mathsf{PS}(\mathcal{H}, \mathcal{H}) \not\subseteq \mathsf{PS} \).

    \end{itemize}
\end{example}

We note that in the above proof of Theorem~\ref{theorem:relative-piecewise-syndetic} we \emph{only used the assumption} that \( \mathcal{F} \) is a proper filter with \( \mathcal{F} \subseteq \mathsf{Syn}(\mathcal{F}^*, \mathcal{G}) \) in showing statement (a) implies statement (b), where we invoked Lemma~\ref{lemma:derived-set-relative-syndetic}(b).
However, the next result appears to use both assumptions on \( \mathcal{F} \) and \( \mathcal{G} \) in a rather strong way.
This will enable us to prove (in Theorem~\ref{theorem:relative-kernel}) a result analogous to the characterization of ultrafilters in the smallest ideal \( K(\beta S) \) \cite[Theorem~4.39]{Hindman:2012tq}.
If \( T \) is a closed subsemigroup of \( \beta S \), then Davenport \cite[Theorem~3.4]{Davenport:1990wq} characterized ultrafilters in the smallest ideal \( K(T) \) and, independently, a similar characterization was obtained in \cite[Theorem~2.2]{Shuungula:2009ty}.
Baglini, Patra, and Shaikh \cite[Theorem~2.9]{Luperi-Baglini:2023aa} extended this latter characterization into a form more closely matching \cite[Theorem~4.39]{Hindman:2012tq}.

\begin{definition}
\label{definition:relative-kernel}
	Let \( \mathcal{F}, \mathcal{G} \) both be proper filters on a semigroup \( S \) with \( \mathcal{G} \) a product filter.
	Put
	\[
		K(\mathcal{F}, \mathcal{G}) := \overline{\mathcal{F}} \cdot K(\overline{\mathcal{G}}).
	\]
\end{definition}

\begin{theorem}
\label{theorem:relative-kernel}	
	Let \( \mathcal{F}, \mathcal{G} \) both be proper filters on a semigroup \( S \) with \( \mathcal{F} \subseteq \mathsf{Syn}(\mathcal{F}^*, \mathcal{G}) \) and \( \mathcal{G} \) a product filter.
	Let \( p \in \beta S \).
	The following statements are equivalent:
	\begin{itemize}
		\item[(a)]
			\( p \in K(\mathcal{F}, \mathcal{G}) \).
			
		\item[(b)]
			There exists \( e \in E(K(\overline{\mathcal{G}})) \) such that \( p \in \overline{\mathcal{F}} \cdot e \).
			
		\item[(c)]
			There exists \( e \in E(K(\overline{\mathcal{G}})) \) such that for all \( q \in \overline{\mathcal{G}} \) we have \( p \in \overline{\mathcal{F}} \cdot q \cdot e \).
			
		\item[(d)]
			There exists \( e \in E(K(\overline{\mathcal{G}})) \) such that for all \( A \in p \) we have \( A'(e) \in \mathsf{Syn}(\mathcal{F}, \mathcal{G}) \).
	\end{itemize}
\end{theorem}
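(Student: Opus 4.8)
The plan is to prove the cycle (a) $\Rightarrow$ (b) $\Rightarrow$ (c) $\Rightarrow$ (d) $\Rightarrow$ (a), leaning on the same two translations of the hypotheses used for Theorem~\ref{theorem:relative-piecewise-syndetic}: \( \mathcal{F} \subseteq \mathsf{Syn}(\mathcal{F}^*, \mathcal{G}) \) says exactly \( \overline{\mathcal{F}} \cdot \overline{\mathcal{G}} \subseteq \overline{\mathcal{F}} \), and \( \mathcal{G} \subseteq \mathsf{Syn}(\mathcal{G}^*, \mathcal{G}) \) says exactly that \( \overline{\mathcal{G}} \) is a closed subsemigroup of \( \beta S \); in particular \( K(\overline{\mathcal{G}}) \) exists, is the union of the minimal left ideals of \( \overline{\mathcal{G}} \), and each minimal left ideal of \( \overline{\mathcal{G}} \) contains an idempotent. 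I will freely use that for every ultrafilter \( s \) the collection \( \mathcal{F} \cdot s \) is a proper filter (Corollary~\ref{corollary:derived-set}(d)) with \( \overline{\mathcal{F} \cdot s} = \overline{\mathcal{F}} \cdot s \), that \( \bigl( A'(s) \bigr)'(r) = A'(r \cdot s) \) (Corollary~\ref{corollary:derived-set}(a,ii)), the description of \( \mathsf{Syn}(\mathcal{F},\mathcal{G}) \) in Corollary~\ref{corollary:relative-syndetic-thick}(a), and the resulting dictionary entry: for an ultrafilter \( s \), one has \( p \in \overline{\mathcal{F}} \cdot s \) if and only if \( A'(s) \in \mathcal{F}^* \) for every \( A \in p \) (combine \( \overline{\mathcal{F}} \cdot s = \overline{\mathcal{F} \cdot s} \), \( (\mathcal{F} \cdot s)^* = \mathcal{F}^* \cdot s \) from Corollary~\ref{corollary:derived-set}(d,ii), \( s = s^* \), and Proposition~\ref{proposition:mesh-operator}).

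For (a) $\Rightarrow$ (b): write \( p = r \cdot s \) with \( r \in \overline{\mathcal{F}} \) and \( s \in K(\overline{\mathcal{G}}) \), pick a minimal left ideal \( L \) of \( \overline{\mathcal{G}} \) with \( s \in L \) and an idempotent \( e \in L \). Minimality gives \( \overline{\mathcal{G}} \cdot e = L \), so \( s \cdot e = s \) and hence \( p = (r \cdot s) \cdot e = p \cdot e \); also \( p = r \cdot s \in \overline{\mathcal{F}} \cdot \overline{\mathcal{G}} \subseteq \overline{\mathcal{F}} \), so \( p = p \cdot e \in \overline{\mathcal{F}} \cdot e \) with \( e \in E\bigl(K(\overline{\mathcal{G}})\bigr) \). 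For (b) $\Rightarrow$ (c): given such an \( e \) and any \( q \in \overline{\mathcal{G}} \), let \( L \) be the minimal left ideal of \( \overline{\mathcal{G}} \) containing \( e \), so \( L = \overline{\mathcal{G}} \cdot e \); then \( q \cdot e \in L \), minimality yields \( \overline{\mathcal{G}} \cdot (q \cdot e) = L \ni e \), so \( e = t \cdot q \cdot e \) for some \( t \in \overline{\mathcal{G}} \), and therefore \( p \in \overline{\mathcal{F}} \cdot e = (\overline{\mathcal{F}} \cdot t) \cdot q \cdot e \subseteq \overline{\mathcal{F}} \cdot q \cdot e \) because \( \overline{\mathcal{F}} \cdot t \subseteq \overline{\mathcal{F}} \cdot \overline{\mathcal{G}} \subseteq \overline{\mathcal{F}} \). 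These are the minimal-left-ideal manipulations already rehearsed in the proof of Theorem~\ref{theorem:relative-piecewise-syndetic}.

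For (c) $\Rightarrow$ (d): keep the \( e \) supplied by (c), fix \( A \in p \), and use Corollary~\ref{corollary:relative-syndetic-thick}(a): it suffices to check \( \bigl( A'(e) \bigr)'(r) \in \mathcal{F}^* \) for every \( r \in \overline{\mathcal{G}} \). By Corollary~\ref{corollary:derived-set}(a,ii) this equals \( A'(r \cdot e) \), and since \( r \cdot e \in \overline{\mathcal{G}} \) is a single ultrafilter, hypothesis (c) gives \( p \in \overline{\mathcal{F}} \cdot (r \cdot e) \), whence \( A'(r \cdot e) \in \mathcal{F}^* \) by the dictionary entry; so \( A'(e) \in \mathsf{Syn}(\mathcal{F},\mathcal{G}) \). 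For (d) $\Rightarrow$ (a): specialize the quantifier in Corollary~\ref{corollary:relative-syndetic-thick}(a) to \( q = e \). For \( A \in p \), the hypothesis \( A'(e) \in \mathsf{Syn}(\mathcal{F},\mathcal{G}) \) forces \( \bigl( A'(e) \bigr)'(e) \in \mathcal{F}^* \), and \( \bigl( A'(e) \bigr)'(e) = A'(e \cdot e) = A'(e) \) since \( e \) is idempotent; thus \( A'(e) \in \mathcal{F}^* \) for every \( A \in p \), i.e.\ \( p \subseteq \mathcal{F}^* \cdot e \). The dictionary entry then gives \( p \in \overline{\mathcal{F}} \cdot e \subseteq \overline{\mathcal{F}} \cdot K(\overline{\mathcal{G}}) = K(\mathcal{F},\mathcal{G}) \).

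The one genuinely nonroutine point is the last implication: a priori the condition ``\( A'(e) \in \mathsf{Syn}(\mathcal{F},\mathcal{G}) \)'' looks weaker than ``\( A'(e) \in \mathcal{F}^* \)'', so it is not obvious that (d) places \( p \) inside \( \overline{\mathcal{F}} \cdot e \). The resolution is that feeding \( e \) itself back into the \( \overline{\mathcal{G}} \)-quantifier, together with idempotency \( e \cdot e = e \), collapses relative syndeticity of \( A'(e) \) down to plain membership in \( \mathcal{F}^* \). Everything else is bookkeeping: keeping track of which algebraic hypothesis is invoked where (the subsemigroup property of \( \overline{\mathcal{G}} \) for the minimal-left-ideal steps and for forming \( r \cdot e \in \overline{\mathcal{G}} \); \( \overline{\mathcal{F}} \cdot \overline{\mathcal{G}} \subseteq \overline{\mathcal{F}} \) for absorbing \( \overline{\mathcal{G}} \)-factors into \( \overline{\mathcal{F}} \)), and making sure every collection-product in sight is of filter/grill type so that the bar commutes with multiplication.
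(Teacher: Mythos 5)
Your proof is correct and follows essentially the same cycle (a) \(\Rightarrow\) (b) \(\Rightarrow\) (c) \(\Rightarrow\) (d) \(\Rightarrow\) (a) with the same minimal-left-ideal manipulations and the same translation between \( p \in \overline{\mathcal{F}} \cdot s \) and \( p \subseteq \mathcal{F}^* \cdot s \) as the paper. The only cosmetic differences are that you prove \( e \in \overline{\mathcal{G}} \cdot q \cdot e \) inline rather than citing it, and in (d) \(\Rightarrow\) (a) you use idempotency to collapse to \( p \in \overline{\mathcal{F}} \cdot e \) where the paper lands in \( \overline{\mathcal{F}} \cdot q \cdot e \); both land in \( K(\mathcal{F}, \mathcal{G}) \).
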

\begin{proof}
	\textbf{(a) \( \Rightarrow \) (b):}
	Pick \( r \in \overline{\mathcal{F}} \) and \( q \in K(\overline{\mathcal{G}}) \) such that \( p = r \cdot q \).
	Let \( L \) be a minimal left ideal of \( \overline{\mathcal{G}} \) with \( q \in L \) and let \( e \in E(L) \).
	By minimality we have \( L = \overline{\mathcal{G}} \cdot e \).
	Therefore we can pick \( u \in \overline{\mathcal{G}} \) with \( q = u \cdot e \), and so \( p = r \cdot q = r \cdot u \cdot e \)
	Note, by assumption on \( \mathcal{F} \), it follows that \( r \cdot u \in \overline{\mathcal{F}} \).
	Hence \( p \in \overline{\mathcal{F}} \cdot e \).
	
	\smallskip
	
	\textbf{(b) \( \Rightarrow \) (c):}
	Pick \( e \in E(K(\overline{\mathcal{G}})) \) as guaranteed and let \( u \in \overline{\mathcal{F}} \) with \( p = u \cdot e \).
	By \cite[Theorem~2.9]{Luperi-Baglini:2023aa} since \( e \in K(\overline{\mathcal{G}}) \) for all \( q \in \overline{\mathcal{G}} \) we have \( e \in \overline{\mathcal{G}} \cdot q \cdot e \).
	Then for all \( q \in \overline{\mathcal{G}} \) we have \( p = u \cdot e \in u \cdot \overline{\mathcal{G}} \cdot q \cdot e \subseteq \overline{\mathcal{F}} \cdot q \cdot e \), where \( u \cdot \overline{\mathcal{G}} \subseteq \overline{\mathcal{F}} \) follows from our assumption on \( \mathcal{F} \).
	
	\smallskip
	
	\textbf{(c) \( \Rightarrow \) (d):}
	Pick \( e \in E(K(\overline{\mathcal{G}})) \) as guaranteed.
	By assumption for all \( q \in \overline{\mathcal{G}} \) and \( A \in p \) we have \( p \in \overline{\mathcal{F}} \cdot q \cdot e \) implies \( A \in \mathcal{F}^* \cdot q \cdot e \), that is, \( A'(e) \in  \mathcal{F}^* \cdot q \).
	Hence from Corollary~\ref{corollary:relative-syndetic-thick}(a) we have \( A'(e) \in \mathsf{Syn}(\mathcal{F}, \mathcal{G}) \).
	
	\smallskip
	\textbf{(d) \( \Rightarrow \) (a):}
	Pick \( e \in E(K(\overline{\mathcal{G}})) \) as guaranteed and fix \( q \in \overline{\mathcal{G}} \).
	Then from Corollary~\ref{corollary:relative-syndetic-thick}(a) it follows \( p \subseteq \mathcal{F}^* \cdot q \cdot e \).
	Since, by Corollary~\ref{corollary:derived-set}(d, iii) \( \mathcal{F}^* \cdot q \cdot e \) is a grill we have \( p \subseteq \mathcal{F}^* \cdot q \cdot e \) implies \( p \in \overline{\mathcal{F}} \cdot q \cdot e \) (for instance, see \cite[Lemma~4.2]{Griffin:2024aa}).
	Because \( K(\overline{\mathcal{G}}) \) is an ideal of \( \overline{\mathcal{G}} \) we have \( q \cdot e \in K(\overline{\mathcal{G}}) \), and so \( p \in \overline{\mathcal{F}} \cdot K(\overline{\mathcal{G}}) \).
\end{proof}

One of the main results of Shuungula, Zelenyuk, and Zelenyuk \cite[Theorem~2.3 and Corollary~2.4]{Shuungula:2009ty} proves their notion of piecewise \( \mathcal{F} \)-syndetic can be characterized in terms of the smallest ideal \( K(\overline{\mathcal{F}}) \).
An analogous result holds for piecewise \( (\mathcal{F}, \mathcal{G}) \)-syndetic and \( K(\mathcal{F}, \mathcal{G}) \).

\begin{corollary}
\label{corollary:relative-kernel}	
	Let \( \mathcal{F}, \mathcal{G} \) both be proper filters on a semigroup \( S \) with \( \mathcal{F} \subseteq \mathsf{Syn}(\mathcal{F}^*, \mathcal{G}) \) and \( \mathcal{G} \) a product filter.
	\begin{itemize}
		\item[(a)]
			For all \( A \subseteq S \) we have \( A \in \mathsf{PS}(\mathcal{F}, \mathcal{G}) \) if and only if \( \overline{A} \cap K(\mathcal{F}, \mathcal{G}) \ne \emptyset \).
			
		\item[(b)]
			\( c\ell\bigl( K(\mathcal{F}, \mathcal{G}) \bigr) = \{ p \in \beta S : p \subseteq \mathsf{PS}(\mathcal{F}, \mathcal{G}) \} \).

        \item[(c)]
            \(K(\mathcal{G}, \mathcal{G}) = K(\overline{\mathcal{G}}) \).
	\end{itemize}
\end{corollary}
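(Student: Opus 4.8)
The plan is to read all three parts off Theorems~\ref{theorem:relative-piecewise-syndetic} and \ref{theorem:relative-kernel}, which do the substantive work; what remains is bookkeeping plus one small ultrafilter construction.

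For part~(a) I would prove the two inclusions separately. For ``$\Rightarrow$'', starting from $A \in \mathsf{PS}(\mathcal{F},\mathcal{G})$, Theorem~\ref{theorem:relative-piecewise-syndetic} ((a)$\Rightarrow$(d)) hands me an idempotent $e \in E(K(\overline{\mathcal{G}}))$ with $A'(e) \in \mathsf{Syn}(\mathcal{F},\mathcal{G})$; the goal is then to produce an ultrafilter $p \in \overline{A}$ lying in $\overline{\mathcal{F}}\cdot e \subseteq \overline{\mathcal{F}}\cdot K(\overline{\mathcal{G}}) = K(\mathcal{F},\mathcal{G})$. First I would use idempotency: since $e = e\cdot e$ and $e \in \overline{\mathcal{G}}$, applying Corollary~\ref{corollary:relative-syndetic-thick}(a) with $q = e$ together with Corollary~\ref{corollary:derived-set}(a, ii) collapses the quantified condition to $A'(e) = \bigl(A'(e)\bigr)'(e) \in \mathcal{F}^*$, i.e.\ $A \in \mathcal{F}^*\cdot e = (\mathcal{F}\cdot e)^*$ by Corollary~\ref{corollary:derived-set}(d, ii) (and $e^* = e$). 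Since $\mathcal{F}\cdot e$ is a proper filter (Corollary~\ref{corollary:derived-set}(d, i) and (d, iii)), the family $\{A \cap B : B \in \mathcal{F}\cdot e\}$ is a proper filterbase; extending it to an ultrafilter $p$ gives $A \in p$ and $\mathcal{F}\cdot e \subseteq p$, hence $p \in \overline{\mathcal{F}\cdot e} = \overline{\mathcal{F}}\cdot e \subseteq K(\mathcal{F},\mathcal{G})$ and $p \in \overline{A}$. For ``$\Leftarrow$'', given $p \in \overline{A}\cap K(\mathcal{F},\mathcal{G})$ I have $A \in p$, so Theorem~\ref{theorem:relative-kernel} ((a)$\Rightarrow$(d)) produces $e \in E(K(\overline{\mathcal{G}}))$ with $B'(e) \in \mathsf{Syn}(\mathcal{F},\mathcal{G})$ for all $B \in p$; in particular $A'(e) \in \mathsf{Syn}(\mathcal{F},\mathcal{G})$ with $e \in E(\overline{\mathcal{G}})$, and Theorem~\ref{theorem:relative-piecewise-syndetic} ((e)$\Rightarrow$(a)) closes the loop.

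Part~(b) would then be purely topological: the sets $\overline{A}$ with $A \in p$ form a neighborhood base at $p$ in $\beta S$, so $p \in c\ell\bigl(K(\mathcal{F},\mathcal{G})\bigr)$ if and only if $\overline{A}\cap K(\mathcal{F},\mathcal{G}) \ne \emptyset$ for every $A \in p$, which by part~(a) is precisely the assertion $p \subseteq \mathsf{PS}(\mathcal{F},\mathcal{G})$. For part~(c) I would note that with $\mathcal{F} = \mathcal{G}$ the standing hypotheses collapse to $\mathcal{G} \subseteq \mathsf{Syn}(\mathcal{G}^*,\mathcal{G})$ (so $\overline{\mathcal{G}}$ is a closed subsemigroup of $\beta S$ and $K(\overline{\mathcal{G}})$ exists) and $K(\mathcal{G},\mathcal{G}) = \overline{\mathcal{G}}\cdot K(\overline{\mathcal{G}})$ by Definition~\ref{definition:relative-kernel}; then the identity $T\cdot K(T) = K(T)$ for $T := \overline{\mathcal{G}}$ is standard semigroup theory: ``$\subseteq$'' holds because $K(T)$ is a left ideal, and for ``$\supseteq$'' one picks, for $s \in K(T)$, a minimal left ideal $L \ni s$, observes $T\cdot s$ is a nonempty left ideal inside $L$, hence $T\cdot s = L$ by minimality, so $s \in L = T\cdot s \subseteq T\cdot K(T)$.

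The only step that is more than bookkeeping is the ``$\Rightarrow$'' half of part~(a), and within it the step that genuinely needs $e$ \emph{idempotent}: deducing $A \in \mathcal{F}^*\cdot e$ from $A'(e) \in \mathsf{Syn}(\mathcal{F},\mathcal{G})$. A general $q \in \overline{\mathcal{G}}$ will not do here, since $\overline{\mathcal{G}}$ need not contain an identity; it is exactly $e\cdot e = e$ that turns the universal quantifier defining $\mathsf{Syn}(\mathcal{F},\mathcal{G})$ into the single membership $A'(e) \in \mathcal{F}^*$. Once that is in hand, recognizing $\mathcal{F}^*\cdot e$ as the grill of the proper filter $\mathcal{F}\cdot e$ and extracting an ultrafilter of $\overline{\mathcal{F}}\cdot e$ through $A$ is routine.
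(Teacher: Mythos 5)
Your proposal is correct, and its skeleton (reduce everything to Theorems~\ref{theorem:relative-piecewise-syndetic} and \ref{theorem:relative-kernel}) matches the paper's; parts (b) and the backward half of (a) are essentially verbatim the paper's argument. Where you genuinely diverge is the forward half of (a): the paper invokes clause (c) of Theorem~\ref{theorem:relative-piecewise-syndetic} to get \( q \in K(\overline{\mathcal{G}}) \) with \( A'(q) \in \mathsf{Syn}(\mathcal{F}, \mathcal{G}) \), then uses the closure characterization \( \mathsf{Syn}(\mathcal{F}, \mathcal{G}) = \{ B : \overline{B} \cap \overline{\mathcal{F}} \cdot p \ne \emptyset \text{ for all } p \in \overline{\mathcal{G}} \} \) together with \( \overline{A'(q)} = \{ r : A \in r \cdot q \} \) to land a point of \( \overline{A} \) in \( \overline{\mathcal{F}} \cdot p \cdot q \subseteq K(\mathcal{F}, \mathcal{G}) \); you instead invoke clause (d), exploit \( e \cdot e = e \) to collapse the syndeticity of \( A'(e) \) to the single membership \( A \in \mathcal{F}^* \cdot e = (\mathcal{F} \cdot e)^* \), and extend the filterbase \( \{ A \cap B : B \in \mathcal{F} \cdot e \} \) to an ultrafilter in \( \overline{\mathcal{F} \cdot e} = \overline{\mathcal{F}} \cdot e \). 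Both are valid; your route buys a cleaner ``grill of a proper filter'' extraction at the cost of needing the identification \( \overline{\mathcal{F} \cdot e} = \overline{\mathcal{F}} \cdot e \) (which the paper itself uses elsewhere, citing \cite[Theorem~4.3]{Griffin:2024aa}), while the paper's route avoids idempotents entirely in this direction. For (c) your reverse inclusion runs through minimal left ideals (\( T \cdot s = L \) by minimality), whereas the paper simply observes \( K(\mathcal{G}, \mathcal{G}) \) is a two-sided ideal of \( \overline{\mathcal{G}} \) and hence contains the smallest one; yours is a bit longer but equally standard.
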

\begin{proof}
	\textbf{(a):}
	By \cite[Lemma~3.9]{Christopherson:2022wr}, we have \( \mathsf{Syn}(\mathcal{F}, \mathcal{G}) = \{ B \subseteq S : \text{for all } p \in \overline{\mathcal{G}} \text{ we have } \overline{B} \cap \overline{\mathcal{F}} \cdot p \ne \emptyset \} \).
	
	\smallskip

	\textbf{(\( \Rightarrow \)):}
	Let \( A \in \mathsf{PS}(\mathcal{F}, \mathcal{G}) \) and, by Theorem~\ref{theorem:relative-piecewise-syndetic} pick \( q \in K(\overline{\mathcal{G}}) \) such that \( A'(q) \in \mathsf{Syn}(\mathcal{F}, \mathcal{G}) \).
	Hence, by the above, for all \( p \in \overline{\mathcal{G}} \) we have \( \overline{A'(q)} \cap \overline{\mathcal{F}} \cdot p \ne \emptyset \), and by Corollary~\ref{corollary:derived-set}(a, i) it follows that \( \overline{A} \cap \overline{\mathcal{F}} \cdot p \cdot q \ne \emptyset \).
	Since \( K(\overline{\mathcal{G}}) \) is an ideal of \( \overline{\mathcal{G}} \) we have \( p \cdot q \in K(\overline{\mathcal{G}}) \) and so \( \overline{A} \cap K(\mathcal{F}, \mathcal{G}) \ne \emptyset \).
	
	\textbf{(\( \Leftarrow \)):}
	Pick \( p \in \overline{A} \cap K(\mathcal{F}, \mathcal{G}) \) as guaranteed.
	Then, by Theorem~\ref{theorem:relative-kernel}, there exists \( e \in E(K(\overline{\mathcal{G}})) \) such that \( A'(e) \in \mathsf{Syn}(\mathcal{F}, \mathcal{G}) \).
	Therefore by Theorem~\ref{theorem:relative-piecewise-syndetic} we have \( A \in \mathsf{PS}(\mathcal{F}, \mathcal{G}) \).
	
	\smallskip

	\textbf{(b):}
	This follows from statement (a).
	Let \( p \in \beta S \). 
	If \( p \in c\ell\bigl( K(\mathcal{F}, \mathcal{G}) \bigr) \), then for all \( A \in p \) we have \( \overline{A} \cap K(\mathcal{F}, \mathcal{G}) \ne \emptyset \), and so by statement (a) we have \( A \in \mathsf{PS}(\mathcal{F}, \mathcal{G}) \).
	If \( p \subseteq \mathsf{PS}(\mathcal{F}, \mathcal{G}) \), then for all \( A \in p \), by statement (a), we have \( \overline{A} \cap K(\mathcal{F}, \mathcal{G}) \ne \emptyset \).
	Hence \( p \in c\ell\bigl( K(\mathcal{F}, \mathcal{G}) \bigr) \).

    \smallskip
    
    \textbf{(c):}
    Since \( K(\overline{\mathcal{G}}) \) is a two-sided ideal of \( \overline{\mathcal{G}} \) we have \( K(\mathcal{G}, \mathcal{G}) \subseteq  K(\overline{\mathcal{G}}) \).
    For the reverse inclusion note \( K(\mathcal{G}, \mathcal{G}) \) is a  two-sided ideal of \( \overline{\mathcal{G}} \) and hence contains the smallest ideal \( K(\overline{\mathcal{G}}) \).
    
\end{proof}

\section{Collectionwise relative notions of size and relative central sets}
\label{section:collectionwise}

This section applies the results in Section~\ref{section:relative-notions} to give (in our view) an easier characterization of `collectionwise' (relative) notions (Corollary~\ref{corollary:collectionwise-relative-piecewise-syndetic}) in terms of derived sets along an ultrafilter (Definition~\ref{definition:collectionwise-relative-piecewise-syndetic}).
Under the assumptions of Theorem~\ref{theorem:relative-kernel}, such sets are precisely those collections contained in an ultrafilter of \( K(\mathcal{F}, \mathcal{G}) \) (Theorem~\ref{theorem:collectionwise-relative-piecewise-syndetic}).
We end this section with a question asking if our defined relative notion of a central set (Definition~\ref{definition:relative-central}) satisfies the Ramsey property (Question~\ref{question:partition-regular}): the answer is ``yes'' under the assumptions of Theorem~\ref{theorem:relative-kernel} and one additional assumption (Corollary~\ref{corollary:relative-central-is-partition-regular}) but the question is open for weaker assumptions on the underlying filters.

Hindman and Lisan \cite{Hindman:1994aa} (see \cite[Theorem~14.21]{Hindman:2012tq} for the appropriate left-right switch version) characterized those collections \( \mathcal{A} \subseteq \mathcal{P}(S) \) which are contained in an ultrafilter of the smallest ideal \( K(\beta S) \): there exists \( p \in K(\beta S) \) with \( \mathcal{A} \subseteq p \) if and only if \( \mathcal{A} \) is collectionwise piecewise syndetic.
Hindman,  Maleki, and Strauss \cite{Hindman:1996aa} (see \cite[Theorem~14.25]{Hindman:2012tq}) also used this notion of collectionwise piecewise syndetic to characterize which collections are contained in idempotents of the smallest ideal \( K(\beta S) \).
The definition of collectionwise piecewise syndetic \cite[Definition~14.19 and Exercise 14.5.1]{Hindman:2012tq} is complicated:
\begin{definition}
\label{definition:collectionwise-piecewise-syndetic}
    Let \( S \) be a semigroup and let \( \mathcal{A} \) a collection on \( S \).
    We call \( \mathcal{A} \) \define{collectionwise piecewise syndetic} if and only if there exists a function \(  H \colon \mathcal{P}_f(\mathcal{A}) \to \mathcal{P}_f(S) \) such that
    \[ \textstyle
        \bigl\{ w^{-1}\bigl( \bigcup_{h \in H(\mathcal{B})} h^{-1}(\bigcap\mathcal{B}) \bigr) : \mathcal{B} \in \mathcal{P}_f(\mathcal{A}) \text{ and } w \in S  \bigr\}
    \]
    has the finite intersection property (f.i.p.).

\end{definition}

Recently, Luperi Baglini, Patra, and Shaikh generalized both of the above results to show for a proper product filter \( \mathcal{F} \) there exists \( p \in K(\overline{\mathcal{F}}) \) with \( \mathcal{A} \subseteq p \) if and only if \( \mathcal{A} \) is collectionwise piecewise \( \mathcal{F} \)-syndetic \cite[Theorem~5.2]{Luperi-Baglini:2023aa}, and they also characterized which collections are members of idempotents in \( K(\overline{\mathcal{F}}) \) \cite[Theorem~5.8]{Luperi-Baglini:2023aa}.
Similarly, their definition \cite[Definition~5.1]{Luperi-Baglini:2023aa} is complicated to state, but in particular we have \( A \) is piecewise (\( \mathcal{F} \)-)syndetic if and only if \( \{A\} \) is collectionwise piecewise  (\( \mathcal{F} \)-)syndetic. %

We now introduce a generalization (see Corollary~\ref{corollary:collectionwise-relative-piecewise-syndetic}) of these two previous notions of collectionwise (relative) piecewise syndetic sets, and simplify the definition via derived along an ultrafilter:
\begin{definition}
\label{definition:collectionwise-relative-piecewise-syndetic}
	Let \( S \) be a semigroup, let \( \mathcal{F}, \mathcal{G} \) both be proper filters on \( S \), and let \( \mathcal{A} \) be a collection on \(S \).
	We call \( \mathcal{A} \) \define{collectionwise piecewise \( (\mathcal{F}, \mathcal{G}) \)-syndetic} if and only if there exists \( q \in \overline{\mathcal{G}} \) such that for all \( \mathcal{B} \in \mathcal{P}_f(\mathcal{A}) \) we have \( \bigcap_{B \in \mathcal{B}} B'(q) \in \mathsf{Syn}(\mathcal{F}, \mathcal{G}) \).
\end{definition}

We first show this collectionwise relative piecewise syndetic notion is equivalent to three other characterizations.
One equivalent form, Proposition~\ref{proposition:collectionwise-relative-piecewise-syndetic}(b), shows that our definition could be rewritten in a form more closely matching Definition~\ref{definition:collectionwise-piecewise-syndetic} and \cite[Definition~5.1]{Luperi-Baglini:2023aa}:

\begin{proposition}
\label{proposition:collectionwise-relative-piecewise-syndetic}
    Let \( S \) be a semigroup, let \( \mathcal{F}, \mathcal{G} \) both be proper filters on \( S \), and let \( \mathcal{A} \) be a collection on \(S \).
    The following are equivalent.
    \begin{itemize}
        \item[(a)] 
             \( \mathcal{A} \) is collectionwise piecewise \( (\mathcal{F}, \mathcal{G}) \)-syndetic.

        \item[(b)]
            There exist \(H \in \bigtimes_{(C,\mathcal{B}) \in \mathcal{F}\times\mathcal{P}_f(\mathcal{A})}\mathcal{P}_{f}(C) \) and \( W \in \bigtimes_{(C,\mathcal{B}) \in \mathcal{F}\times\mathcal{P}_f(\mathcal{A})} \mathcal{G} \) such that 
            \[
                \textstyle
                \Bigl\{ w^{-1}\bigl( \bigcup_{h \in H(C, \mathcal{B})} h^{-1}(\bigcap \mathcal{B}) \bigr) : C \in \mathcal{F}, \mathcal{B} \in \mathcal{P}_f(\mathcal{A}), \text{ and } w \in W(C, \mathcal{B}) \Bigr\} \cup \mathcal{G}
            \]
            has the finite intersection property (f.i.p.).

        \item[(c)]
             There exists \( q \in \overline{\mathcal{G}} \) such that for all \( \mathcal{B} \in \mathcal{P}_f(\mathcal{A}) \) we have \( \bigcap \mathcal{B} \in \mathsf{Syn}(\mathcal{F}, \mathsf{Thick}(\mathcal{G}, q)) \).

        \item[(d)] 
            There exists \( q \in \overline{\mathcal{G}} \) such that for all \( p \in \overline{\mathcal{G}} \) there exists \( r \in \overline{\mathcal{F}} \) such that \( \mathcal{A} \subseteq r \cdot p \cdot q \).
    \end{itemize}
\end{proposition}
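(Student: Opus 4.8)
The plan is to treat condition (a) (which is just Definition~\ref{definition:collectionwise-relative-piecewise-syndetic} restated) as the hub and establish the three equivalences (a)$\iff$(c), (a)$\iff$(d), and (a)$\iff$(b) in turn. The preliminary fact used everywhere is that for an ultrafilter $q$ the operation $E\mapsto E'(q)$ commutes with finite unions and finite intersections (Proposition~\ref{proposition:derived-set}(a,iii)) and with the translation operators (Proposition~\ref{proposition:derived-set}(d)); in particular $\bigcap_{B\in\mathcal{B}}B'(q)=\bigl(\bigcap\mathcal{B}\bigr)'(q)$ for every $\mathcal{B}\in\mathcal{P}_f(\mathcal{A})$. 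So (a) is exactly the assertion that there is $q\in\overline{\mathcal{G}}$ with $\bigl(\bigcap\mathcal{B}\bigr)'(q)\in\mathsf{Syn}(\mathcal{F},\mathcal{G})$ for every $\mathcal{B}\in\mathcal{P}_f(\mathcal{A})$, and I will work with this reformulation.

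Granting the reformulation, (a)$\iff$(c) is immediate: apply the equivalence ``(a)$\iff$(c)'' of Proposition~\ref{proposition:derived-set-relative-syndetic} to the set $\bigcap\mathcal{B}$ for each $\mathcal{B}$ (this needs only that $\mathcal{F}$ is a proper stack and $\mathcal{G}$ a proper filter, and it is valid for an arbitrary $q\in\beta S$, so the quantifier ``$\exists q\in\overline{\mathcal{G}}$'' transfers verbatim). For (a)$\iff$(d) I would unwind $\mathsf{Syn}(\mathcal{F},\mathcal{G})$ via Corollary~\ref{corollary:relative-syndetic-thick}(a): $\bigl(\bigcap\mathcal{B}\bigr)'(q)\in\mathsf{Syn}(\mathcal{F},\mathcal{G})$ iff for all $p\in\overline{\mathcal{G}}$ we have $\bigl(\bigl(\bigcap\mathcal{B}\bigr)'(q)\bigr)'(p)\in\mathcal{F}^*$, and by Corollary~\ref{corollary:derived-set}(a,ii) the inner expression equals $\bigl(\bigcap\mathcal{B}\bigr)'(p\cdot q)=\bigcap_{B\in\mathcal{B}}B'(p\cdot q)$, using that $p\cdot q$ is an ultrafilter (Corollary~\ref{corollary:derived-set}(d,iii)). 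After rearranging quantifiers, (a) becomes: there is $q\in\overline{\mathcal{G}}$ such that for every $p\in\overline{\mathcal{G}}$ and every $\mathcal{B}\in\mathcal{P}_f(\mathcal{A})$ the finite intersection $\bigcap_{B\in\mathcal{B}}B'(p\cdot q)$ meets every member of $\mathcal{F}$. Since $\mathcal{F}$ is a filter, for a fixed $p$ this last condition is exactly the statement that $\mathcal{F}\cup\{B'(p\cdot q):B\in\mathcal{A}\}$ has the finite intersection property, which in turn is equivalent (Proposition~\ref{proposition:stack}(c) and Proposition~\ref{proposition:filter-grill}(e)) to the existence of an ultrafilter $r\in\overline{\mathcal{F}}$ with $B'(p\cdot q)\in r$, i.e.\ $B\in r\cdot p\cdot q$, for every $B\in\mathcal{A}$. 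That is precisely (d).

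The bulk of the work is (a)$\iff$(b), a bookkeeping translation between ``finite intersection property of a family of preimages together with $\mathcal{G}$'' and ``membership in an ultrafilter $q\in\overline{\mathcal{G}}$.'' For (a)$\Rightarrow$(b): given $q\in\overline{\mathcal{G}}$ as in the reformulated (a), for each $(C,\mathcal{B})\in\mathcal{F}\times\mathcal{P}_f(\mathcal{A})$ use $\bigl(\bigcap\mathcal{B}\bigr)'(q)\in\mathsf{Syn}(\mathcal{F},\mathcal{G})$ to choose $H(C,\mathcal{B})\in\mathcal{P}_f(C)$ with $\bigcup_{h\in H(C,\mathcal{B})}h^{-1}\bigl(\bigl(\bigcap\mathcal{B}\bigr)'(q)\bigr)\in\mathcal{G}$; by Proposition~\ref{proposition:derived-set}(d) and (a,iii) this set equals $\bigl(\bigcup_{h\in H(C,\mathcal{B})}h^{-1}(\bigcap\mathcal{B})\bigr)'(q)$, so set $W(C,\mathcal{B}):=\bigl(\bigcup_{h\in H(C,\mathcal{B})}h^{-1}(\bigcap\mathcal{B})\bigr)'(q)\in\mathcal{G}$. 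For $w\in W(C,\mathcal{B})$ the definition of a derived set gives $w^{-1}\bigl(\bigcup_{h\in H(C,\mathcal{B})}h^{-1}(\bigcap\mathcal{B})\bigr)\in q$, so the whole family in (b), together with $\mathcal{G}\subseteq q$, lies inside the proper ultrafilter $q$ and hence has the finite intersection property. For (b)$\Rightarrow$(a): extend the given family to an ultrafilter $q$; then $\mathcal{G}\subseteq q$ gives $q\in\overline{\mathcal{G}}$, and for each $(C,\mathcal{B})$ reading the generators back as membership statements yields $W(C,\mathcal{B})\subseteq\bigl(\bigcup_{h\in H(C,\mathcal{B})}h^{-1}(\bigcap\mathcal{B})\bigr)'(q)$; since $W(C,\mathcal{B})\in\mathcal{G}$ and $\mathcal{G}$ is a stack, $\bigcup_{h\in H(C,\mathcal{B})}h^{-1}\bigl(\bigl(\bigcap\mathcal{B}\bigr)'(q)\bigr)\in\mathcal{G}$, and as $H(C,\mathcal{B})\in\mathcal{P}_f(C)$ with $C\in\mathcal{F}$ arbitrary we conclude $\bigl(\bigcap\mathcal{B}\bigr)'(q)\in\mathsf{Syn}(\mathcal{F},\mathcal{G})$ for every $\mathcal{B}$, which is (a).

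The only genuine obstacle is clerical: keeping the double index $(C,\mathcal{B})$ and the choice functions $H$, $W$ straight while repeatedly invoking the identities $h^{-1}(E'(q))=(h^{-1}E)'(q)$ and $\bigcup_{h}(E_h)'(q)=\bigl(\bigcup_{h}E_h\bigr)'(q)$, and remembering precisely where each hypothesis enters --- that $q$ (and $p\cdot q$, $r\cdot p\cdot q$) is an ultrafilter so that derived sets respect the Boolean operations, that $\mathcal{G}$ is an upward closed (stack) collection, and that $\mathcal{F}$ is a filter so that ``meets every member of $\mathcal{F}$'' is a finite-intersection-property condition. No new idea beyond the earlier results is needed.
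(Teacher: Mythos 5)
Your proposal is correct and follows essentially the same route as the paper: the same construction of \( H(C,\mathcal{B}) \) and \( W(C,\mathcal{B}) \) from the derived set \( \bigl(\bigcap\mathcal{B}\bigr)'(q) \) for (a)\(\Leftrightarrow\)(b), the same appeal to Proposition~\ref{proposition:derived-set-relative-syndetic} for (a)\(\Leftrightarrow\)(c), and the same grill/finite-intersection-property extension argument for (d). The only (inessential) difference is that you derive (d) directly from the reformulated (a) rather than passing through (c) as the paper does.
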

\begin{proof}
    \textbf{(a) \( \Rightarrow \) (b):}
    Pick \( q \in \overline{\mathcal{G}} \) as guaranteed for \( \mathcal{A} \). 
    For each \( C \in \mathcal{F} \) and \( \mathcal{B} \in \mathcal{P}_f(\mathcal{A}) \), since \( (\bigcap \mathcal{B})'(q) \in \mathsf{Syn}(\mathcal{F}, \mathcal{G}) \), pick \( H(C, \mathcal{B}) \in \mathcal{P}_f(C) \) such that \( \bigcup_{h \in H(C, \mathcal{B})} h^{-1} (\bigcap \mathcal{B})'(q) \in \mathcal{G} \).
    Now put each \( W(C, \mathcal{B}) := \bigcup_{h \in H(C, \mathcal{B})} h^{-1} (\bigcap \mathcal{B})'(q) \) and note \( H \in \bigtimes_{(C,\mathcal{B}) \in \mathcal{F}\times\mathcal{P}_f(\mathcal{A})}\mathcal{P}_{f}(C) \) and \( W \in \bigtimes_{(C,\mathcal{B}) \in \mathcal{F}\times\mathcal{P}_f(\mathcal{A})} \mathcal{G} \).

    To see that f.i.p.~holds let \( C_1, C_2, \ldots, C_m \in \mathcal{F} \), let \( \mathcal{B}_1, \mathcal{B}_2, \ldots, \mathcal{B}_n \in \mathcal{P}_f(\mathcal{A}) \), for each \( i \in \{1, 2, \ldots, m \} \) and \( j \in \{ 1, 2, \ldots, n \} \) let \( W_{i, j} \in \mathcal{P}_f(W(C_i, \mathcal{B}_j)) \), and let \( D \in \mathcal{G} \).
    For each \( (i, j) \in \{1, 2, \ldots, m\} \times \{1, 2, \ldots, n\} \) and each \( w \in W_{i,j} \) we have \( w^{-1}\bigl( \bigcup_{h \in H(C_i, \mathcal{B}_j)} h^{-1}(\bigcap\mathcal{B}_j) \bigr) \in q \) and \( D \in q \).
    Therefore
    \[
         D \cap \bigcap\bigl\{ w^{-1}\bigl(\bigcup_{h \in H(C_i, \mathcal{B}_j)} h^{-1}\bigl(\bigcap\mathcal{B}_j\bigr)\bigr) : (i, j) \in \{1, 2, \ldots, m\} \times  \{1, 2, \ldots, n\} \text{ and } w \in W_{i, j} \bigr\} \in q
    \]

    \smallskip

    \textbf{(b) \( \Rightarrow \) (a):}
    Pick \( H \in \bigtimes_{(C,\mathcal{B}) \in \mathcal{F}\times\mathcal{P}_f(\mathcal{A})}\mathcal{P}_{f}(C) \) and \( W \in \bigtimes_{(C,\mathcal{B}) \in \mathcal{F}\times\mathcal{P}_f(\mathcal{A})} \mathcal{G} \) as guaranteed for \( \mathcal{A} \).
    Pick \( q \in \overline{\mathcal{G}} \) such that for each \( C \in \mathcal{F} \), \( \mathcal{B} \in \mathcal{P}_f(\mathcal{A}) \), and \( w \in W(C, \mathcal{B}) \) we have \( w^{-1} \bigl( \bigcup_{h \in H(C, \mathcal{B})} h^{-1}(\bigcap\mathcal{B}) \bigr) \in q \).
    Then
    \(
        W(C, \mathcal{B}) \subseteq \bigl(\bigcup_{h \in H(C, \mathcal{B})} h^{-1}(\bigcap\mathcal{B})\bigr)'(q)
    \)
    implies 
    \(
         \bigl(\bigcup_{h \in H(C, \mathcal{B})} h^{-1}(\bigcap\mathcal{B})\bigr)'(q) \in \mathcal{G}
    \).
    By Corollary~\ref{corollary:derived-set}(a, iii) and (d) we have \(  \bigl(\bigcup_{h \in H(C, \mathcal{B})} h^{-1}(\bigcap\mathcal{B})\bigr)'(q) =  \bigcup_{h \in H(C, \mathcal{B})} h^{-1}\bigl(\bigcap\mathcal{B}\bigr)'(q) \) and so, by Definition~\ref{definition:relative-syndetic-thick}(a), we have \( (\bigcap\mathcal{B})'(q) \in \mathsf{Syn}(\mathcal{F}, \mathcal{G}) \).
  
    \smallskip

    \textbf{(a) \( \Leftrightarrow \) (c):} This follows from Proposition~\ref{proposition:derived-set-relative-syndetic}.

    \smallskip

    \textbf{(c) \( \Rightarrow \) (d):}
    Pick \( q \in \overline{\mathcal{G}} \) as guaranteed for \( \mathcal{A} \).
    Then by Corollary~\ref{corollary:relative-syndetic-thick} we have for all \( \mathcal{B} \in \mathcal{P}_f(\mathcal{A}) \) and all \( p \in \overline{\mathcal{G}} \) that \( \bigcap\mathcal{B} \in \mathcal{F}^* \cdot p \cdot q \). 
    Since each \( \mathcal{F}^* \cdot p \cdot q \) is a grill (Corollary~\ref{corollary:derived-set}(d, iii)) it follows that there exists \( r \in \overline{\mathcal{F}} \) with \( \mathcal{A} \subseteq r \cdot p \cdot q \).

    \smallskip
    \textbf{(d) \( \Rightarrow \) (c):}
    Pick \( q \in \overline{\mathcal{G}} \) as guaranteed and let \( p \in \overline{\mathcal{G}} \).
    Pick \( r \in \overline{\mathcal{F}} \) as guaranteed.
    Then for all \( \mathcal{B} \in \mathcal{P}_f(\mathcal{A}) \) we have \( \bigcap\mathcal{B} \in r \cdot p \cdot q \), which implies \( \bigcap\mathcal{B} \in \mathcal{F}^* \cdot p \cdot q \). 
    Hence, from Corollary~\ref{corollary:relative-syndetic-thick} it follows that \( \bigcap\mathcal{B} \in \mathsf{Syn}(\mathcal{F}, \mathsf{Thick}(\mathcal{G}, q)) \).
    
\end{proof}

Generalizing \cite[Theorem~5.2]{Luperi-Baglini:2023aa} we show collectionwise piecewise \( (\mathcal{F}, \mathcal{G}) \)-syndetic  characterizes those collections contained in ultrafilters of \( K(\mathcal{F}, \mathcal{G}) \): 
\begin{theorem}
\label{theorem:collectionwise-relative-piecewise-syndetic}
	Let \( S \) be a semigroup, let	\( \mathcal{F}, \mathcal{G} \) both be proper filters on \( S \) with \( \mathcal{F} \subseteq \mathsf{Syn}(\mathcal{F}^*, \mathcal{G}) \) and \( \mathcal{G} \) a product filter, and let \( \mathcal{A} \subseteq \mathcal{P}(S) \) be a collection.
	Then \( \mathcal{A} \) is collectionwise piecewise \( (\mathcal{F}, \mathcal{G}) \)-syndetic if and only if there exists \( p \in K(\mathcal{F}, \mathcal{G}) \) with \( \mathcal{A} \subseteq p \).
\end{theorem}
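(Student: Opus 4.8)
The plan is to derive both implications directly from Proposition~\ref{proposition:collectionwise-relative-piecewise-syndetic} and Theorem~\ref{theorem:relative-kernel}, so that essentially no new work beyond bookkeeping with the smallest ideal is needed. First I would record that the hypothesis $\mathcal{G}\subseteq\mathsf{Syn}(\mathcal{G}^*,\mathcal{G})$ makes $\overline{\mathcal{G}}$ a nonempty closed subsemigroup of $\beta S$, hence a compact right topological semigroup, so $K(\overline{\mathcal{G}})$ is a nonempty \emph{two-sided} ideal of $\overline{\mathcal{G}}$; together with $\overline{\mathcal{F}}\ne\emptyset$ (as $\mathcal{F}$ is a proper filter) this makes $K(\mathcal{F},\mathcal{G})=\overline{\mathcal{F}}\cdot K(\overline{\mathcal{G}})$ a nonempty subset of $\beta S$.

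For the forward implication, suppose $\mathcal{A}$ is collectionwise piecewise $(\mathcal{F},\mathcal{G})$-syndetic. By Proposition~\ref{proposition:collectionwise-relative-piecewise-syndetic}, equivalence (a)$\iff$(d), there is $q\in\overline{\mathcal{G}}$ such that for every $p\in\overline{\mathcal{G}}$ there is $r\in\overline{\mathcal{F}}$ with $\mathcal{A}\subseteq r\cdot p\cdot q$. I would then fix any $p_*\in K(\overline{\mathcal{G}})$ and apply this with $p=p_*$, obtaining $r\in\overline{\mathcal{F}}$ with $\mathcal{A}\subseteq r\cdot p_*\cdot q$. Since $K(\overline{\mathcal{G}})$ is a two-sided ideal of $\overline{\mathcal{G}}$ and $q\in\overline{\mathcal{G}}$, we have $p_*\cdot q\in K(\overline{\mathcal{G}})$, so $r\cdot p_*\cdot q=r\cdot(p_*\cdot q)\in\overline{\mathcal{F}}\cdot K(\overline{\mathcal{G}})=K(\mathcal{F},\mathcal{G})$. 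Thus $p:=r\cdot p_*\cdot q$ lies in $K(\mathcal{F},\mathcal{G})$ and contains $\mathcal{A}$.

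For the reverse implication, suppose $p\in K(\mathcal{F},\mathcal{G})$ with $\mathcal{A}\subseteq p$. By Theorem~\ref{theorem:relative-kernel}, implication (a)$\Rightarrow$(d), there is $e\in E(K(\overline{\mathcal{G}}))$ with $A'(e)\in\mathsf{Syn}(\mathcal{F},\mathcal{G})$ for every $A\in p$. I would take $q:=e$, which lies in $\overline{\mathcal{G}}$ since $e\in K(\overline{\mathcal{G}})\subseteq\overline{\mathcal{G}}$. For each $\mathcal{B}\in\mathcal{P}_f(\mathcal{A})$ the set $\bigcap\mathcal{B}$ belongs to $p$, because $p$ is a filter and $\mathcal{B}\subseteq\mathcal{A}\subseteq p$; hence $\bigl(\bigcap\mathcal{B}\bigr)'(q)=\bigl(\bigcap\mathcal{B}\bigr)'(e)\in\mathsf{Syn}(\mathcal{F},\mathcal{G})$. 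By Definition~\ref{definition:collectionwise-relative-piecewise-syndetic}, $q$ witnesses that $\mathcal{A}$ is collectionwise piecewise $(\mathcal{F},\mathcal{G})$-syndetic.

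Because both directions reduce to quoting established results, I do not anticipate a genuine obstacle; the two points requiring care are (i) using that $K(\overline{\mathcal{G}})$ is a two-sided (not merely left) ideal, so that $p_*\cdot q$ remains in the smallest ideal, and (ii) that condition (d) of Proposition~\ref{proposition:collectionwise-relative-piecewise-syndetic} furnishes one witness $q$ working for all $p\in\overline{\mathcal{G}}$ simultaneously. One could instead prove the forward direction by promoting the witness from Definition~\ref{definition:collectionwise-relative-piecewise-syndetic} to an idempotent $e\in E(K(\overline{\mathcal{G}}))$ uniformly over $\mathcal{B}\in\mathcal{P}_f(\mathcal{A})$ --- rerunning the proof of Theorem~\ref{theorem:relative-piecewise-syndetic}, implications (b)$\Rightarrow$(c)$\Rightarrow$(d), whose auxiliary choices do not depend on the set involved --- and then extending $\{\bigcap\mathcal{B}:\mathcal{B}\in\mathcal{P}_f(\mathcal{A})\}$ to an ultrafilter inside the grill $\mathcal{F}^*\cdot e$ (which contains it, using $e\cdot e=e$); but the route through Proposition~\ref{proposition:collectionwise-relative-piecewise-syndetic} is shorter.
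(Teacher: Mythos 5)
Your proof is correct and matches the paper's in substance. The reverse direction is the same as the paper's: both apply Theorem~\ref{theorem:relative-kernel}(a)\,\( \Rightarrow \)\,(d) to get an idempotent \( e \in E(K(\overline{\mathcal{G}})) \) with \( \{A'(e) : A \in p\} \subseteq \mathsf{Syn}(\mathcal{F}, \mathcal{G}) \), and then check that \( e \) witnesses Definition~\ref{definition:collectionwise-relative-piecewise-syndetic}. One small presentational point: the definition asks for \( \bigcap_{B \in \mathcal{B}} B'(q) \in \mathsf{Syn}(\mathcal{F}, \mathcal{G}) \), while you produced \( \bigl(\bigcap\mathcal{B}\bigr)'(e) \in \mathsf{Syn}(\mathcal{F}, \mathcal{G}) \); these agree by Proposition~\ref{proposition:derived-set}(a, iii) since \( e \) is an ultrafilter, which the paper cites explicitly and you could add for completeness.

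In the forward direction the paper argues from the definition directly: fix any \( e \in E(K(\overline{\mathcal{G}})) \), use Corollary~\ref{corollary:relative-syndetic-thick}(a) to place every \( \bigcap_{B \in \mathcal{B}} B'(q) \) in the grill \( \mathcal{F}^* \cdot e = (\mathcal{F} \cdot e)^* \), and extend the resulting filterbase to an ultrafilter \( r \cdot e \in \overline{\mathcal{F}} \cdot e \), yielding \( \mathcal{A} \subseteq r \cdot e \cdot q \) with \( e \cdot q \in K(\overline{\mathcal{G}}) \). You instead quote Proposition~\ref{proposition:collectionwise-relative-piecewise-syndetic}(a)\,\( \iff \)\,(d) — which already packages exactly that grill-extension step in its proof — and then specialize the universally quantified \( p \) to an arbitrary \( p_* \in K(\overline{\mathcal{G}}) \), observing \( p_* \cdot q \in K(\overline{\mathcal{G}}) \) since the smallest ideal is two-sided. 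This is a cosmetic re-organization rather than a different argument; the underlying mechanism (extend inside a grill, then exploit the ideal property) is identical, and your routing through the already-proven (d) is slightly tighter. Both of the points you flag as requiring care are handled correctly.
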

\begin{proof}
	\textbf{(\( \Rightarrow \)):}
	Pick \( q \in \overline{\mathcal{G}} \) as guaranteed for \( \mathcal{A} \).
	Fix any \( e \in E(K(\overline{\mathcal{G}})) \).
	By Corollary~\ref{corollary:relative-syndetic-thick}(a), for all \( \mathcal{B} \in \mathcal{P}_f(\mathcal{A}) \) we have \( \bigcap_{B \in \mathcal{B}} B'(q) \in \mathcal{F}^* \cdot e \).
	Now by Corollary~\ref{corollary:derived-set}(d, iii) \( \mathcal{F}^* \cdot e \) is a grill on \( S \) and so it follows there exists \( r \in \overline{\mathcal{F}} \) with \( \mathcal{A} \subseteq r \cdot e \cdot q \).
	Since \( K(\overline{\mathcal{G}}) \) is an ideal in \( \overline{\mathcal{G}} \) we have \( e \cdot q \in K(\overline{\mathcal{G}}) \) and so \( r \cdot e \cdot q \in K(\mathcal{F}, \mathcal{G}) \).
	
	\smallskip
	
	\textbf{(\( \Leftarrow \)):}
	Pick \( p \in K(\mathcal{F}, \mathcal{G}) \) as guaranteed for \( \mathcal{A} \).
	By Theorem~\ref{theorem:relative-kernel} pick \( e \in E(K(\overline{\mathcal{G}})) \) with \( \{ A'(e) : A \in p \} \subseteq \mathsf{Syn}(\mathcal{F}, \mathcal{G}) \).
	For \( \mathcal{B} \in \mathcal{P}_f(\mathcal{A}) \) we have \( \bigcap_{B \in \mathcal{B}} B'(e) = \bigl( \bigcap_{B \in \mathcal{B}} B \bigr)'(e) \in  \mathsf{Syn}(\mathcal{F}, \mathcal{G}) \), where the equality follows from Proposition~\ref{proposition:derived-set}(a, iii).
	Hence \( \mathcal{A} \) is collectionwise piecewise \( (\mathcal{F}, \mathcal{G}) \)-syndetic.
\end{proof}

In particular, we note we have also characterized collectionwise piecewise (\( \mathcal{F} \)-)syndetic sets:
\begin{corollary}
\label{corollary:collectionwise-relative-piecewise-syndetic}
    Let \( S \) be a semigroup and let \( \mathcal{A} \) be a collection on \(S \).
    \begin{itemize}
        \item[(a)]
            If \( \mathcal{F} \) is a proper product filter, then \( \mathcal{A} \) is collectionwise piecewise \( (\mathcal{F}, \mathcal{F}) \)-syndetic if and only if \( \mathcal{A} \) is collectionwise piecewise \( \mathcal{F} \)-syndetic.

        \item[(b)]
            \( \mathcal{A} \) is collectionwise piecewise \( (\{S\}, \{S\}) \)-syndetic if and only if \( \mathcal{A} \) is collectionwise piecewise syndetic
    \end{itemize}
\end{corollary}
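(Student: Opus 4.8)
The plan is to reduce both statements to the characterization in Theorem~\ref{theorem:collectionwise-relative-piecewise-syndetic} --- that, under the stated algebraic hypotheses, $\mathcal{A}$ is collectionwise piecewise $(\mathcal{F},\mathcal{G})$-syndetic if and only if $\mathcal{A} \subseteq p$ for some $p \in K(\mathcal{F},\mathcal{G})$ --- combined with Corollary~\ref{corollary:relative-kernel}(c), which identifies $K(\mathcal{G},\mathcal{G})$ with the smallest ideal $K(\overline{\mathcal{G}})$, and then the already-known characterizations of collections contained in ultrafilters of the smallest ideal of a closed subsemigroup.

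For statement (a) I would first note that, on setting $\mathcal{G} := \mathcal{F}$, the single hypothesis $\mathcal{F} \subseteq \mathsf{Syn}(\mathcal{F}^*,\mathcal{F})$ is exactly both hypotheses $\mathcal{F} \subseteq \mathsf{Syn}(\mathcal{F}^*,\mathcal{G})$ and $\mathcal{G} \subseteq \mathsf{Syn}(\mathcal{G}^*,\mathcal{G})$ needed by Theorem~\ref{theorem:collectionwise-relative-piecewise-syndetic} and Corollary~\ref{corollary:relative-kernel}; moreover, by \cite[Corollary~3.12]{Christopherson:2022wr} it is equivalent to $\overline{\mathcal{F}}$ being a closed subsemigroup of $\beta S$, which is precisely the running hypothesis of Luperi Baglini, Patra, and Shaikh. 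Then Theorem~\ref{theorem:collectionwise-relative-piecewise-syndetic} gives that $\mathcal{A}$ is collectionwise piecewise $(\mathcal{F},\mathcal{F})$-syndetic iff there is $p \in K(\mathcal{F},\mathcal{F})$ with $\mathcal{A} \subseteq p$, Corollary~\ref{corollary:relative-kernel}(c) rewrites $K(\mathcal{F},\mathcal{F}) = K(\overline{\mathcal{F}})$, and finally \cite[Theorem~5.2]{Luperi-Baglini:2023aa} says the existence of such a $p \in K(\overline{\mathcal{F}})$ is equivalent to $\mathcal{A}$ being collectionwise piecewise $\mathcal{F}$-syndetic; chaining these three equivalences finishes (a).

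For statement (b) I would specialize to $\mathcal{F} = \mathcal{G} = \{S\}$. The only point needing a line of verification is the hypothesis $\{S\} \subseteq \mathsf{Syn}(\{S\}^*,\{S\})$: since $h^{-1}S = S$ for every $h \in S$, any singleton $H$ witnesses $S \in \mathsf{Syn}(\{S\}^*,\{S\})$, and as $\{S\}$ is upward closed this yields $\{S\} \subseteq \mathsf{Syn}(\{S\}^*,\{S\})$ (equivalently, $\overline{\{S\}} = \beta S$ is trivially a closed subsemigroup). Then Theorem~\ref{theorem:collectionwise-relative-piecewise-syndetic} together with Corollary~\ref{corollary:relative-kernel}(c) gives that $\mathcal{A}$ is collectionwise piecewise $(\{S\},\{S\})$-syndetic iff there is $p \in K(\{S\},\{S\}) = K(\overline{\{S\}}) = K(\beta S)$ with $\mathcal{A} \subseteq p$, and by Hindman and Lisan \cite{Hindman:1994aa} (see \cite[Theorem~14.21]{Hindman:2012tq}) this holds iff $\mathcal{A}$ is collectionwise piecewise syndetic.

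Since the substantive work has all been done in Section~\ref{section:relative-notions} and in the cited literature, I do not expect a genuine obstacle here; the only things requiring care are the bookkeeping of hypotheses --- confirming the $\mathcal{F} = \mathcal{G}$ specialization collapses the two algebraic conditions into the stated one, and that $\{S\}$ satisfies it --- and checking that the cited characterizations of collections in $K(\overline{\mathcal{F}})$ and in $K(\beta S)$ are being used in their right-topological form, consistently with Remark~\ref{remark:left-right-and-improper}(a).
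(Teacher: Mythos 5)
Your proposal is correct and follows essentially the same route as the paper: both reduce (a) to the chain Theorem~\ref{theorem:collectionwise-relative-piecewise-syndetic} $\to$ Corollary~\ref{corollary:relative-kernel}(c) $\to$ \cite[Theorem~5.2]{Luperi-Baglini:2023aa}, and both obtain (b) by specializing to \( \mathcal{F} = \mathcal{G} = \{S\} \) (the paper simply applies (a), while you invoke Hindman--Lisan directly, a negligible difference). Your explicit check that \( \{S\} \subseteq \mathsf{Syn}(\{S\}^*, \{S\}) \) is a small point the paper leaves implicit, but otherwise there is nothing to add.
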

\begin{proof}
    We have \( \overline{\mathcal{F}} \) has a smallest ideal and, by Corollary~\ref{corollary:relative-kernel}(c) we have \( K(\mathcal{F}, \mathcal{F}) = K(\overline{\mathcal{F}}) \).

    \smallskip
    
    \textbf{(a):}
    By Theorem~\ref{theorem:collectionwise-relative-piecewise-syndetic} we have \( \mathcal{A} \subseteq \mathcal{P}(S) \) is collectionwise piecewise \( (\mathcal{F}, \mathcal{F}) \)-syndetic if and only if there exists \( p \in  K(\mathcal{F}, \mathcal{F}) = K(\overline{\mathcal{F}}) \) with \( \mathcal{A} \subseteq p \), and, by \cite[Theorem~5.2]{Luperi-Baglini:2023aa}, we extend this equivalence to \( \mathcal{A} \) is collectionwise piecewise \( \mathcal{F} \)-syndetic.

    \smallskip

    \textbf{(b):}
    Apply statement (a) with \( \mathcal{F} = \{S\} \).
\end{proof}

Furstenberg \cite[Ch.~8]{Furstenberg1981a} (dynamically) defined the notion of a central set in \( (\mathbb{N}, +) \) and proved his powerful combinatorial central sets theorem.
Bergelson and Hindman, with the assistance of B.~Weiss, then characterized central sets in arbitrary semigroups as members of idempotents in \( K(\beta S) \) \cite{Bergelson:1990aa} and proved a central sets theorem for semigroups.
This algebraic point-of-view of central sets has enabled the proof of stronger versions of the central sets theorem for semigroups, such as in De, Hindman, Strauss \cite{De:2008aa}.
Hindman's survey articles on central sets and other notions of size \cite{Hindman:2020aa, Hindman2019a} document the extensive history and applications of central sets in dynamics, algebra, and Ramsey theory.

Goswami and Poddar defined a relative notion of central sets and also proved a relative central sets theorem \cite{Goswami:2022aa}, building on earlier work of De, Hindman, and Strauss proving a relative central sets theorem for subsemigroups of \( \beta S \) where \( S \) is a commutative semigroup \cite{De:2009aa}.
If \( \mathcal{F} \) is a proper product filter, then \( A \subseteq S \) is \define{\( \mathcal{F} \)-central} if and only if there exists an idempotent \( e \in K(\overline{\mathcal{F}}) \) with \( A \in e \).
Our goal now is to generalize this relative notion of central sets to account for two filters \( \mathcal{F}, \mathcal{G} \), however, under the assumptions of Theorem~\ref{theorem:relative-kernel} it is not clear \emph{if} \( K(\mathcal{F}, \mathcal{G}) \) possess idempotents.
The following result provides a sufficient condition ensuring \( K(\mathcal{F}, \mathcal{G}) \) does have idempotents:
\begin{theorem}
\label{theorem:relative-kernel-has-idempotents}
    Let \( S \) be a semigroup, let \( \mathcal{F}, \mathcal{G} \) both be proper product filters satisfying the condition \( \mathcal{F} \subseteq \mathsf{Syn}(\mathcal{F}^*, \mathcal{G}) \).
    Then  \( K(\mathcal{F}, \mathcal{G}) \) is a subsemigroup of \( \beta S \) with  \( E\bigl( K(\mathcal{F}, \mathcal{G}) \bigr) \ne \emptyset \).    

\end{theorem}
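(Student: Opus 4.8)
The plan is to translate all three algebraic hypotheses into assertions about products of the closed sets \( \overline{\mathcal{F}} \) and \( \overline{\mathcal{G}} \), recognize \( K(\mathcal{F},\mathcal{G}) \) as a left ideal of the compact right topological semigroup \( \overline{\mathcal{F}} \), and then extract an idempotent essentially for free — the same way one produces idempotents in \( K(\beta S) \). First I would recall, exactly as at the start of the proof of Theorem~\ref{theorem:relative-piecewise-syndetic}, that by \cite[Theorem~3.11]{Christopherson:2022wr} the condition \( \mathcal{F} \subseteq \mathsf{Syn}(\mathcal{F}^*, \mathcal{G}) \) is equivalent to \( \overline{\mathcal{F}} \cdot \overline{\mathcal{G}} \subseteq \overline{\mathcal{F}} \), and by \cite[Corollary~3.12]{Christopherson:2022wr} the conditions \( \mathcal{F} \subseteq \mathsf{Syn}(\mathcal{F}^*, \mathcal{F}) \) and \( \mathcal{G} \subseteq \mathsf{Syn}(\mathcal{G}^*, \mathcal{G}) \) say precisely that \( \overline{\mathcal{F}} \) and \( \overline{\mathcal{G}} \) are closed subsemigroups of \( \beta S \). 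In particular \( \overline{\mathcal{G}} \) is a nonempty compact right topological semigroup, so \( K(\overline{\mathcal{G}}) \) exists, is nonempty, and satisfies \( K(\overline{\mathcal{G}}) \subseteq \overline{\mathcal{G}} \); hence \( K(\mathcal{F},\mathcal{G}) = \overline{\mathcal{F}} \cdot K(\overline{\mathcal{G}}) \) is nonempty and, using \( \overline{\mathcal{F}} \cdot \overline{\mathcal{G}} \subseteq \overline{\mathcal{F}} \), is contained in \( \overline{\mathcal{F}} \).

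Next I would observe, using associativity and \( \overline{\mathcal{F}} \cdot \overline{\mathcal{F}} \subseteq \overline{\mathcal{F}} \), that \( \overline{\mathcal{F}} \cdot K(\mathcal{F},\mathcal{G}) = (\overline{\mathcal{F}} \cdot \overline{\mathcal{F}}) \cdot K(\overline{\mathcal{G}}) \subseteq \overline{\mathcal{F}} \cdot K(\overline{\mathcal{G}}) = K(\mathcal{F},\mathcal{G}) \), so \( K(\mathcal{F},\mathcal{G}) \) is a left ideal of \( \overline{\mathcal{F}} \) lying inside \( \overline{\mathcal{F}} \). Since \( K(\mathcal{F},\mathcal{G}) \subseteq \overline{\mathcal{F}} \), this immediately gives \( K(\mathcal{F},\mathcal{G}) \cdot K(\mathcal{F},\mathcal{G}) \subseteq \overline{\mathcal{F}} \cdot K(\mathcal{F},\mathcal{G}) \subseteq K(\mathcal{F},\mathcal{G}) \), proving \( K(\mathcal{F},\mathcal{G}) \) is a subsemigroup of \( \beta S \). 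For the idempotent, fix any \( p \in K(\mathcal{F},\mathcal{G}) \) and consider \( \overline{\mathcal{F}} \cdot p \). Because \( p \in \overline{\mathcal{F}} \) and \( \overline{\mathcal{F}} \) is a semigroup, for \( r_1, r_2 \in \overline{\mathcal{F}} \) we have \( r_1 p r_2 \in \overline{\mathcal{F}} \), so \( (\overline{\mathcal{F}} p)(\overline{\mathcal{F}} p) \subseteq \overline{\mathcal{F}} p \) and \( \overline{\mathcal{F}} p \) is a subsemigroup; it is closed, being the image of the compact set \( \overline{\mathcal{F}} \) under the continuous map \( r \mapsto r \cdot p \) (right topologicity of \( \beta S \)); and \( \overline{\mathcal{F}} p \subseteq \overline{\mathcal{F}} \cdot K(\mathcal{F},\mathcal{G}) \subseteq K(\mathcal{F},\mathcal{G}) \) by the left ideal property. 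Thus \( \overline{\mathcal{F}} p \) is a nonempty compact right topological semigroup, so by \cite[Theorem~2.5]{Hindman:2012tq} it contains an idempotent \( e \), and \( e \in K(\mathcal{F},\mathcal{G}) \), which gives \( E\bigl(K(\mathcal{F},\mathcal{G})\bigr) \ne \emptyset \).

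The main pitfall to avoid is the temptation to mimic the proof of Theorem~\ref{theorem:relative-kernel} and try to realize an idempotent inside a set of the shape \( \overline{\mathcal{F}} \cdot e \) with \( e \in E(K(\overline{\mathcal{G}})) \): closing such a set under multiplication forces one to control products of the form \( \overline{\mathcal{G}} \cdot \overline{\mathcal{F}} \), for which the hypotheses provide nothing, so that route stalls. The resolution, carried out above, is to discard the specific generator and use only that \( K(\mathcal{F},\mathcal{G}) \) is a left ideal of the compact right topological semigroup \( \overline{\mathcal{F}} \): any such left ideal contains, for each of its points \( p \), the compact subsemigroup \( \overline{\mathcal{F}} \cdot p \), hence an idempotent. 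I expect the only delicate point in writing this up is being careful that every step stays inside \( \overline{\mathcal{F}} \) (so that the left ideal computation is legitimate), which is exactly what hypothesis \( \mathcal{F} \subseteq \mathsf{Syn}(\mathcal{F}^*, \mathcal{G}) \) buys; note that the argument uses only that \( K(\overline{\mathcal{G}}) \) is a nonempty subset of \( \overline{\mathcal{G}} \), with the smallest-ideal property entering merely through Definition~\ref{definition:relative-kernel}.
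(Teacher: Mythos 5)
Your proof is correct and, at its core, takes the same route as the paper: translate the three hypotheses into \( \overline{\mathcal{F}}\cdot\overline{\mathcal{G}}\subseteq\overline{\mathcal{F}} \) and the statements that \( \overline{\mathcal{F}},\overline{\mathcal{G}} \) are closed subsemigroups, observe that \( K(\mathcal{F},\mathcal{G}) \) is a left ideal of the compact right topological semigroup \( \overline{\mathcal{F}} \), and then locate a nonempty closed subsemigroup inside it to which \cite[Theorem~2.5]{Hindman:2012tq} applies. The paper uses \( \overline{\mathcal{F}}\cdot e \) for \( e\in K(\overline{\mathcal{G}}) \) and you use \( \overline{\mathcal{F}}\cdot p \) for \( p\in K(\mathcal{F},\mathcal{G}) \); both are closed left ideals of \( \overline{\mathcal{F}} \) contained in \( K(\mathcal{F},\mathcal{G}) \), so both work.

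However, your ``pitfall'' paragraph contains a misconception worth flagging: the route you warn against is exactly the paper's, and it does not stall. You suggest that closing \( \overline{\mathcal{F}}\cdot e \) under multiplication forces one to control \( \overline{\mathcal{G}}\cdot\overline{\mathcal{F}} \), presumably because you were expanding \( (r_1 e)(r_2 e)=r_1(er_2)e \). But the cleaner observation — the very one you deploy for \( \overline{\mathcal{F}}\cdot p \) — applies equally here: \( \overline{\mathcal{F}}\cdot e \) is a left ideal of \( \overline{\mathcal{F}} \) since \( \overline{\mathcal{F}}(\overline{\mathcal{F}}e)=(\overline{\mathcal{F}}\,\overline{\mathcal{F}})e\subseteq\overline{\mathcal{F}}e \), and it is contained in \( \overline{\mathcal{F}} \) because \( e\in\overline{\mathcal{G}} \) and \( \overline{\mathcal{F}}\cdot\overline{\mathcal{G}}\subseteq\overline{\mathcal{F}} \). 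Any left ideal \( L \) of a semigroup \( T \) with \( L\subseteq T \) is automatically a subsemigroup, since \( LL\subseteq TL\subseteq L \); no product in \( \overline{\mathcal{G}}\cdot\overline{\mathcal{F}} \) ever needs to be examined. So the two routes are the same observation packaged around different choices of generator, and neither is a dead end. Your closing remark that only nonemptiness of \( K(\overline{\mathcal{G}})\subseteq\overline{\mathcal{G}} \) is used (the smallest-ideal structure enters only through Definition~\ref{definition:relative-kernel}) is a nice and accurate point.
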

\begin{proof}
    From Theorem~\ref{theorem:relative-kernel} we have
    \(
        K(\mathcal{F}, \mathcal{G}) = \bigcup_{e \in K(\overline{\mathcal{G}})} \overline{\mathcal{F}} \cdot e
    \).
    Each \( \overline{\mathcal{F}} \cdot e \) is a closed (recall  \( p \mapsto p \cdot e \) is continuous) left ideal of \( \overline{\mathcal{F}} \).
    Hence \(K(\mathcal{F}, \mathcal{G}) \) is a subsemigroup of \( \overline{\mathcal{F}} \) and \( E\bigl( K(\mathcal{F}, \mathcal{G}) \bigr) \ne \emptyset \) follows because each \( E(\overline{\mathcal{F}} \cdot e) \ne \emptyset \).    

\end{proof}
Note if we replace the assumption `\( \mathcal{F} \subseteq \mathsf{Syn}(\mathcal{F}^*, \mathcal{G}) \)' by `\( \mathcal{G} \subseteq \mathsf{Syn}(\mathcal{G}^*, \mathcal{F}) \)' in Theorem~\ref{theorem:relative-kernel-has-idempotents}, while keeping the other two assumptions, then similarly it follows that  \( K(\mathcal{F}, \mathcal{G}) \) is a subsemigroup with idempotents.
With either choice, we could then define \( A \) as ``\( (\mathcal{F}, \mathcal{G}) \)-central'' if and only if \( \overline{A} \cap E\bigl(K(\mathcal{F}, \mathcal{G})\bigr) \ne \emptyset \). 
Such a definition would show if \( A_1 \cup A_2 \) is ``\( (\mathcal{F}, \mathcal{G}) \)-central'' then either \( A_1 \) or \( A_2 \) is also.
If \( \mathcal{F} = \mathcal{G} \), then, by Corollary~\ref{corollary:relative-kernel}(c), this would reduce to Goswami and Poddar's definition of \( \mathcal{F} \)-central.

Instead of taking the above as our notion of relative central, we follow a recent point-of-view of Glasscock and Le \cite{Glasscock:2024aa} that, in particular, characterizes central sets as members of certain idempotent filters (not necessarily ultrafilters).
Then \( A \subseteq S \) is \define{central} if and only if there exists a proper idempotent filter \( \mathcal{F} \) on \( S \) with \( A \in \mathcal{F} \) and \( \mathcal{F} \) is collectionwise piecewise syndetic.
(This will follow as a special case of Theorem~\ref{theorem:relative-central-is-partition-regular}.)

\begin{definition}
\label{definition:relative-central}
    Let \( S \) be a semigroup and let \( \mathcal{F}, \mathcal{G} \) both be proper filters on \( S \).
    Call \( A \subseteq S \) an \define{\( (\mathcal{F}, \mathcal{G}) \)-central} set if and only if there exists a idempotent proper filter \( \mathcal{H} \) with \( A \in \mathcal{H} \) and \( \mathcal{H} \) is collectionwise piecewise \( (\mathcal{F}, \mathcal{G}) \)-syndetic.
\end{definition}

Observe \( S \) is \( (\mathcal{F}, \mathcal{G}) \)-central, however with this definition we don't know if this particular notion has the Ramsey property (see, for instance, Definition~\ref{definition:filter-grill}(b)): 
\begin{question}
\label{question:partition-regular}
    Let \( \mathcal{F}, \mathcal{G} \) both be proper filters on a semigroup \( S \) and let \( A_1, A_2 \subseteq S \).
    If \( A_1 \cup A_2 \) is \( (\mathcal{F}, \mathcal{G}) \)-central, does there exists \( i \in \{1, 2\} \) such that \( A_i \) is \( (\mathcal{F}, \mathcal{G}) \)-central?
\end{question}
We'll end this section by showing the answer to Question~\ref{question:partition-regular} is ``yes'' under the assumptions of Theorem~\ref{theorem:relative-kernel-has-idempotents}, but we don't know the answer under weaker assumptions on \( \mathcal{F} \) and \( \mathcal{G} \).
Similar to the proof of \cite[Theorem~5.8]{Luperi-Baglini:2023aa}, our proof follows the Hindman, Maleki, and Strauss \cite[Theorem~3.8]{Hindman:1996aa} combinatorial characterizations of elements of minimal idempotents (which are the central sets).

\begin{theorem}
\label{theorem:relative-central-is-partition-regular}
    Let \( S \) be a semigroup, let \( \mathcal{F}, \mathcal{G} \) both be proper product filters satisfying the condition \( \mathcal{F} \subseteq \mathsf{Syn}(\mathcal{F}^*, \mathcal{G}) \), and let \( A \subseteq S \).
    The following statements are equivalent.
    \begin{itemize}
        \item[(a)]
            \( A \) is \( (\mathcal{F}, \mathcal{G}) \)-central.
            
        \item[(b)]
            \( \overline{A} \cap E\bigl( K(\mathcal{F}, \mathcal{G}) \bigr) \ne \emptyset \).
    \end{itemize}
\end{theorem}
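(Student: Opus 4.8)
The plan is to prove the two implications separately; essentially all of the content is in $(a)\Rightarrow(b)$, while $(b)\Rightarrow(a)$ is immediate. For $(b)\Rightarrow(a)$ I would take $\mathcal{H}$ to be the idempotent ultrafilter itself: given $e\in\overline{A}\cap E\bigl(K(\mathcal{F},\mathcal{G})\bigr)$, the ultrafilter $\mathcal{H}:=e$ is a proper filter, it is idempotent in the paper's sense since $e=e\cdot e$ forces $e\subseteq e\cdot e$, and $A\in e=\mathcal{H}$; finally $e\in K(\mathcal{F},\mathcal{G})$ and $\mathcal{H}=e\subseteq e$, so Theorem~\ref{theorem:collectionwise-relative-piecewise-syndetic} (applied with the collection $\mathcal{H}$ and the witnessing ultrafilter $p=e$) shows $\mathcal{H}$ is collectionwise piecewise $(\mathcal{F},\mathcal{G})$-syndetic. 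Hence $A$ is $(\mathcal{F},\mathcal{G})$-central.

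For $(a)\Rightarrow(b)$, suppose $\mathcal{H}$ is an idempotent proper filter with $A\in\mathcal{H}$ that is collectionwise piecewise $(\mathcal{F},\mathcal{G})$-syndetic. First I would record two structural facts. (1) Since $\mathcal{H}$ is an idempotent filter, $\overline{\mathcal{H}}$ is a closed subsemigroup of $\beta S$: for $p,q\in\overline{\mathcal{H}}$ and $B\in\mathcal{H}\subseteq\mathcal{H}\cdot\mathcal{H}$ we have $B'(\mathcal{H})\in\mathcal{H}\subseteq p$, and $\mathcal{H}\subseteq q$ gives $B'(\mathcal{H})\subseteq B'(q)$ by Proposition~\ref{proposition:derived-set}(c, i), so upward closure of $p$ yields $B'(q)\in p$, i.e. $B\in p\cdot q$; thus $p\cdot q\in\overline{\mathcal{H}}$. (2) By the hypotheses and \cite[Theorem~3.11 and Corollary~3.12]{Christopherson:2022wr}, $\overline{\mathcal{F}}$ is a closed subsemigroup of $\beta S$ and $\overline{\mathcal{F}}\cdot\overline{\mathcal{G}}\subseteq\overline{\mathcal{F}}$; hence for any $e_0\in\overline{\mathcal{G}}$ the set $\overline{\mathcal{F}}\cdot e_0$ is a left ideal of $\overline{\mathcal{F}}$, it is closed (being the image of the compact set $\overline{\mathcal{F}}$ under the continuous map $p\mapsto p\cdot e_0$), so it is a closed subsemigroup of $\beta S$, and if moreover $e_0\in K(\overline{\mathcal{G}})$ then $\overline{\mathcal{F}}\cdot e_0\subseteq\overline{\mathcal{F}}\cdot K(\overline{\mathcal{G}})=K(\mathcal{F},\mathcal{G})$.

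Then I would run the main line. By Theorem~\ref{theorem:collectionwise-relative-piecewise-syndetic} pick $p_0\in K(\mathcal{F},\mathcal{G})$ with $\mathcal{H}\subseteq p_0$, and by Theorem~\ref{theorem:relative-kernel} (the implication $(a)\Rightarrow(b)$ there) pick $e_0\in E\bigl(K(\overline{\mathcal{G}})\bigr)$ with $p_0\in\overline{\mathcal{F}}\cdot e_0$. Put $T:=\overline{\mathcal{H}}$ and $L:=\overline{\mathcal{F}}\cdot e_0$. By (1) and (2) both $T$ and $L$ are closed subsemigroups of $\beta S$ with $p_0\in T\cap L$, and $L\subseteq K(\mathcal{F},\mathcal{G})$. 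Hence $T\cap L$ is a nonempty closed subsemigroup of the compact right topological Hausdorff semigroup $\beta S$, so by \cite[Theorem~2.5]{Hindman:2012tq} it contains an idempotent $e$. Since $e\in T=\overline{\mathcal{H}}$ we get $A\in\mathcal{H}\subseteq e$, and since $e\in L\subseteq K(\mathcal{F},\mathcal{G})$ we conclude $e\in\overline{A}\cap E\bigl(K(\mathcal{F},\mathcal{G})\bigr)$, as desired.

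The step I expect to be the main obstacle is precisely this last move. One would like to apply the idempotent existence theorem directly to $\overline{\mathcal{H}}\cap K(\mathcal{F},\mathcal{G})$, but $K(\mathcal{F},\mathcal{G})$, like an ordinary smallest ideal, need not be closed, so this intersection need not be a compact semigroup. The resolution is to descend to a single closed left ideal $\overline{\mathcal{F}}\cdot e_0$ contained in $K(\mathcal{F},\mathcal{G})$, and it is exactly Theorem~\ref{theorem:relative-kernel} that allows one to choose the idempotent $e_0\in E\bigl(K(\overline{\mathcal{G}})\bigr)$ so that this smaller closed set still meets $\overline{\mathcal{H}}$; the three standing algebraic hypotheses on $\mathcal{F},\mathcal{G}$ are used to guarantee that $\overline{\mathcal{F}}$ and each $\overline{\mathcal{F}}\cdot e_0$ are subsemigroups and that $K(\mathcal{F},\mathcal{G})$ is built from such ideals.
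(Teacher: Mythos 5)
Your proposal is correct and follows essentially the same route as the paper: for $(a)\Rightarrow(b)$ both find a single closed left ideal $\overline{\mathcal{F}}\cdot e_0\subseteq K(\mathcal{F},\mathcal{G})$ meeting $\overline{\mathcal{H}}$, verify $\overline{\mathcal{H}}$ is a closed subsemigroup, and extract an idempotent from the compact subsemigroup $\overline{\mathcal{H}}\cap\overline{\mathcal{F}}\cdot e_0$; for $(b)\Rightarrow(a)$ both take $\mathcal{H}=e$. The only cosmetic difference is that you verify the closed-subsemigroup property of $\overline{\mathcal{H}}$ by a direct ultrafilter computation, whereas the paper deduces it from the chain $\mathcal{H}\subseteq\mathcal{H}\cdot\mathcal{H}=\mathsf{Thick}(\mathcal{H},\mathcal{H}^*)\subseteq\mathsf{Syn}(\mathcal{H}^*,\mathcal{H})$ together with \cite[Corollary~3.12]{Christopherson:2022wr}.
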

\begin{proof}
    \textbf{(a) \( \Rightarrow \) (b):}
    Pick a proper filter \( \mathcal{H} \) as guaranteed for \( A \).
    By Theorems~\ref{theorem:collectionwise-relative-piecewise-syndetic} and \ref{theorem:relative-kernel} it follows that there exists \( e \in E(K(\overline{\mathcal{G}})) \) with \( \overline{\mathcal{H}} \cap \overline{\mathcal{F}} \cdot e \ne \emptyset \).
    From the proof of Theorem~\ref{theorem:relative-kernel-has-idempotents} we have \( \overline{\mathcal{F}} \cdot e \) is a closed subsemigroup of \( \beta S \) (in fact, a left ideal of \( \overline{\mathcal{F}} \)).
    Hence it suffices to show \( \overline{\mathcal{H}} \) is closed subsemigroup too.
    (For then \( \overline{\mathcal{H}} \cap \overline{\mathcal{F}} \cdot e \) is a closed subsemigroup that contains an idempotent which is in \( K(\mathcal{F}, \mathcal{G}) \).)
    Since \( \mathcal{H} \) is an idempotent filter we have \( \mathcal{H} \subseteq \mathcal{H} \cdot \mathcal{H} = \mathsf{Thick}(\mathcal{H}, \mathcal{H}^*) \subseteq \mathsf{Syn}(\mathcal{H}^*, \mathcal{H}) \), where the equality follows from Corollary~\ref{corollary:relative-syndetic-thick}(b) and the second inclusion follows from Corollary~\ref{corollary:relative-syndetic-thick}(a) and Proposition~\ref{proposition:derived-set}(c, i).

    \smallskip

    \textbf{(b) \( \Rightarrow \) (a):}
    Pick \( e \in \overline{A} \cap E\bigl( K(\mathcal{F}, \mathcal{G}) \bigr) \) as guaranteed. 
    Then \( A \in e \), \( e \) is an idempotent (ultra)filter, and, by Theorem~\ref{theorem:collectionwise-relative-piecewise-syndetic}, \( e \) is collectionwise piecewise \( (\mathcal{F}, \mathcal{G}) \)-syndetic.
\end{proof}

\begin{corollary}
\label{corollary:relative-central-is-partition-regular}
     Let \( S \) be a semigroup, let \( \mathcal{F}, \mathcal{G} \) both be proper product filters satisfying the condition \( \mathcal{F} \subseteq \mathsf{Syn}(\mathcal{F}^*, \mathcal{G}) \).
     Then \( \mathsf{Cen}(\mathcal{F}, \mathcal{G}) := \{ A \subseteq S : A \text{ is } (\mathcal{F}, \mathcal{G})\text{-central} \} \) is a proper grill.
\end{corollary}
\begin{proof}
    Note \( \mathsf{Cen}(\mathcal{F}, \mathcal{G}) \) is a proper stack just follows from the assumption that  \( \mathcal{F}, \mathcal{G} \) are proper filters.
    If \( A_1 \cup A_2 \in \mathsf{Cen}(\mathcal{F}, \mathcal{G}) \), then, by Theorem~\ref{theorem:relative-central-is-partition-regular}, pick \( e \in \overline{A_1 \cup A_2} \cap E\bigl( K(\mathcal{F}, \mathcal{G}) \bigr) \). 
    Pick \( i \in \{1, 2\} \) with \( A_i \in e \subseteq \mathsf{Cen}(\mathcal{F}, \mathcal{G}) \).
\end{proof}

\subsection*{Acknowledgments}
\label{section:acknowledgments}
We thank  Lorenzo Luperi Baglini, Daniel Glasscock, Sayan Goswami, Neil Hindman,  Anh Le, Sourav Kanti Patra, and Md.~Moid Shaikh for reading an earlier draft of this article and providing helpful feedback.
Research for the all the authors was supported by the American Institute of Mathematics (AIM) Structured Quartet Research Ensemble (SQuaRE) \emph{Relative notions of size and the Stone--Čech compactification} project. 
The authors thank AIM for their support and conducive space for meeting and working on mathematics during Autumn 2022 and Summer 2024.

\printbibliography

\end{document}